\documentclass[10pt]{amsart}

\numberwithin{equation}{section}
\newcounter{mnote}
\let\oldmarginpar\marginpar
\renewcommand\marginpar[1]{\-\oldmarginpar[\raggedleft\footnotesize #1]%
{\raggedright\footnotesize #1}}

\usepackage[english]{babel}

\usepackage{amssymb}
\usepackage{mathrsfs}
\usepackage{stmaryrd}
\usepackage{chemarrow}
\usepackage[norelsize]{algorithm2e}
\usepackage{enumerate}
\usepackage{graphicx}
\usepackage{float}
\usepackage{subcaption}
\usepackage[all]{xy}
\usepackage{tikz}
\usetikzlibrary{arrows}
\usepackage{multirow}
\usepackage[colorinlistoftodos]{todonotes}
\usepackage[colorlinks=true, allcolors=blue]{hyperref}
\usepackage[hyperpageref]{backref} 
\usepackage{hyperref}
\hypersetup{hypertex=true,
colorlinks=true,
linkcolor=blue,
anchorcolor=blue,
citecolor=red}
\usepackage{verbatim}
\usepackage{booktabs}
\allowdisplaybreaks

\newtheorem{theorem}{Theorem}[section]
\newtheorem{lemma}[theorem]{Lemma}
\newtheorem{corollary}[theorem]{Corollary}

\newtheorem{example}[theorem]{Example}

\newtheorem{remark}[theorem]{Remark}
\newtheorem{assumption}[theorem]{Assumption}

\newcommand{\normmm}[1]{{\left\vert\kern-0.25ex\left\vert\kern-0.25ex\left\vert #1
\right\vert\kern-0.25ex\right\vert\kern-0.25ex\right\vert}}
\newcommand{\dx}{\,{\rm d}x}
\newcommand{\ds}{\,{\rm d}s}

\newcommand{\curl}{\operatorname{curl}}
\renewcommand{\div}{\operatorname{div}}
\newcommand{\grad}{\operatorname{grad}}

\newcommand{\rot}{\operatorname{rot}}

\newcommand{\skw}{\operatorname{skw}}

\newcommand{\Oplus}{\ensuremath{\vcenter{\hbox{\scalebox{1.5}{$\oplus$}}}}}

\begin{document}

\title[IP nonconforming FEM for SGE]{An interior penalty nonconforming finite element method for strain gradient elasticity}

\author{Xuehai Huang}%
\address{School of Mathematics, Shanghai University of Finance and Economics, Shanghai 200433, China}%
\email{huang.xuehai@sufe.edu.cn}%
\author{Zheqian Tang}%
\address{School of Mathematics, Shanghai University of Finance and Economics, Shanghai 200433, China}%
\email{tangzq0329@163.com}%

\thanks{The first author was supported by the National Natural Science Foundation of China (Grant No.\ 12671432).}

\keywords{Strain gradient elasticity model; interior penalty nonconforming finite element method; optimal and robust error analysis; finite element complex.}

\begin{abstract}
An interior penalty nonconforming finite element method is developed for a strain gradient elasticity (SGE) model in arbitrary dimensions, using an $H^1$-nonconforming displacement element built from vector-valued quadratic polynomials enriched by divergence-free face bubbles. Rigorous analysis establishes optimal and robust error estimates with respect to both the Lam\'{e} coefficient $\lambda$ and the size parameter $\iota$. The underlying divergence-commuting structure further leads to nonconforming finite element Stokes complexes in two and three dimensions. Numerical experiments support the predicted convergence behavior and parameter robustness, while a circular-hole benchmark under uniaxial tension illustrates the applicability of the method to a multiply connected domain.
\end{abstract}



\maketitle

\section{Introduction}

Many microstructured solids exhibit size-dependent responses when the characteristic scale of deformation becomes comparable to an intrinsic material length, a phenomenon that classical elasticity cannot describe. Strain gradient theories account for such effects by incorporating higher-order strain measures. The Toupin--Mindlin theory provides a general framework with several additional material parameters \cite{toupin1962elastic,mindlin1964micro,mindlin1968first}, while the simplified strain gradient elasticity (SGE) model of Aifantis et al. \cite{altan1992structure,ru1993simple} involves only one size parameter and has been widely used in engineering applications \cite{askes2011gradient,polizzotto2015unifying}. In this paper, we consider this reduced model on a bounded polytopal domain $\Omega\subset\mathbb{R}^d$ ($d\geq 2$):
\begin{equation}\label{SGE0}
\begin{cases}
-\div ((\boldsymbol{I}-\iota^{2}\Delta)\boldsymbol{\sigma}(\boldsymbol{u}))=\boldsymbol{f} &\mbox{in} \ \Omega,\\
\boldsymbol{u}=\partial_{n}\boldsymbol{u}=\boldsymbol{0} &\mbox{on} \ \partial\Omega,
\end{cases}
\end{equation}
where $\boldsymbol{u}:\Omega\to\mathbb R^d$ is the displacement and $\boldsymbol{f}\in L^2(\Omega;\mathbb R^d)$ is the body force. The linearized strain and Cauchy stress are, respectively,
\[
\boldsymbol{\varepsilon}(\boldsymbol{u})=\frac{1}{2}\big(\nabla\boldsymbol{u}+(\nabla\boldsymbol{u})^\intercal\big),
\qquad
\boldsymbol{\sigma}(\boldsymbol{u})=2\mu\boldsymbol{\varepsilon}(\boldsymbol{u})+\lambda(\div\boldsymbol{u})\boldsymbol{I},
\]
with Lam\'{e} coefficients $\mu>0$ and $\lambda\geq\lambda_0>0$. Here $\boldsymbol{I}$ is the identity tensor, $\partial_n\boldsymbol{u}$ is the normal derivative of $\boldsymbol{u}$ on $\partial\Omega$, and $\iota\in(0,1]$ is the size parameter. Problem~\eqref{SGE0} is a fourth-order singular perturbation of linear elasticity. As $\iota\to0$, its limiting solution generally fails to satisfy the higher-order boundary condition $\partial_n\boldsymbol{u}=\boldsymbol{0}$, and a boundary layer develops. Independently, large values of $\lambda$ drive the material toward incompressibility and may cause locking. The fourth-order structure and these two parameter limits make it challenging to construct a low-order method that is both optimal and robust with respect to $\iota$ and $\lambda$.

Existing finite element methods address the fourth-order structure in
several ways. The primal displacement formulation has been discretized
using $H^2$-conforming elements
\cite{zervos2001modelling,zervos2009two} and various
$H^2$-nonconforming elements
\cite{MR3712289,MR3912981,MR4296093}. Parameter-robust methods with
respect to both $\iota$ and $\lambda$ were developed in
\cite{ChenHuangHuang2023,MR4549866,tianshudan}. Their uniform
energy-norm estimate has the sharp, but suboptimal, convergence rate
$\mathcal O(h^{1/2})$. 
The loss of order was subsequently overcome in
\cite{ChenHuangHuang2025,HuangHuangTang2024} by imposing the higher-order boundary condition
through Nitsche's technique, leading to optimal and
parameter-robust convergence. The displacement spaces employed in
these two methods are nonconforming discretizations of the short
complex
\[
H_0^2(\Omega;\mathbb R^d)
\xrightarrow{\div}
H_0^1(\Omega)\cap L_0^2(\Omega)
\longrightarrow0.
\]

Mixed formulations provide another route by reducing the regularity
required of the displacement at the cost of introducing additional
stress-type variables; see, for example, \cite{HuangTang2026,chirkov2024mixed-2,riesselmann2024efficient}.
Interior penalty methods instead relax interelement continuity while
retaining a primal formulation. They have long been used for
fourth-order elliptic problems; see the early works \cite{douglasdupont1976,Baker1977,wheeler1978} and
the $C^0$ interior penalty analysis in \cite{BrennerSungIP2005}. Applications to
strain-gradient models include the one-dimensional Toupin--Mindlin
formulation in \cite{EngelEtAl2002} and the two- and three-dimensional
flexoelectricity method in \cite{VenturaCodonyFernandezMendez2021}, which contains SGE as a
special case. 
The strain-gradient formulations in \cite{EngelEtAl2002,VenturaCodonyFernandezMendez2021}, however, do
not establish error estimates that are simultaneously uniform in
the size parameter $\iota$ and in the nearly
incompressible limit $\lambda\to\infty$.

Our recent works pursued two different parameter-robust approaches to
the SGE model. The $H^2$-nonconforming primal method in \cite{HuangHuangTang2024}
achieves optimal convergence by combining an enriched displacement
space with a Nitsche treatment of the normal-derivative boundary
condition, but its global coupling is not restricted to
$(d-1)$-dimensional faces. The mixed method in \cite{HuangTang2026} lowers the
displacement approximation to the vector-valued Crouzeix--Raviart
element, but introduces a third-order double-stress tensor as an
additional primary variable.

The present work follows a third route. We retain the primal
displacement formulation, introduce no tensor-valued auxiliary
variable, and transfer the continuity requirements of the
fourth-order term to a symmetric interior penalty formulation on the
full mesh skeleton. This permits an $H^1$-nonconforming displacement
approximation whose interelement coupling is entirely face based.
Unlike standard $C^0$ interior penalty discretizations, the present
method relaxes full $H^1$ conformity while preserving normal
continuity across element faces. The resulting displacement space is
therefore $H(\div)$-conforming despite being $H^1$-nonconforming,
and its associated interpolation operator commutes with the divergence.

To the best of our knowledge, no existing primal interior penalty
method for the SGE model, valid in arbitrary dimensions,
simultaneously combines such a displacement space, a
divergence-commuting interpolation, and error estimates with
constants uniform in the limits $\iota\to0$ and
$\lambda\to\infty$.

To realize this approach, we construct, in arbitrary dimensions, a
finite element with a local space $V(T)$ consisting of
vector-valued quadratic polynomials enriched by explicitly
represented divergence-free face bubbles. This space satisfies
\[
\div V(T)=\mathbb P_1(T).
\]
In two and three dimensions, $V(T)$ coincides with the $k=2$
local space in \cite{Guzman2012}. The present construction provides an
explicit representation of the divergence-free face bubbles valid
in arbitrary dimensions, a different set of degrees of freedom,
and a global $H(\div)$-conforming assembly with only face-based
interelement coupling, tailored to the present
structure-preserving IP discretization.
The resulting global space $V_h$ is
$H^1$-nonconforming but $H(\div)$-conforming, with all interelement
continuity constraints associated with $(d-1)$-dimensional faces. It
provides a nonconforming discretization of the lower-regularity short
complex
\begin{equation*}
H_0^1(\Omega;\mathbb R^d)
\xrightarrow{\div}
L_0^2(\Omega)
\longrightarrow 0,
\end{equation*}
through the discrete sequence
\begin{equation*}
V_h
\xrightarrow{\div}
\mathbb P_1(\mathcal T_h)\cap L_0^2(\Omega)
\longrightarrow 0.
\end{equation*}
More precisely, the divergence map is surjective and the associated
interpolation operator
$
I_h:H_0^1(\Omega;\mathbb R^d)\longrightarrow V_h
$
satisfies the commuting relation
\begin{equation}\label{intro-commutative}
\div I_h\boldsymbol v
=Q_{1,h}\div\boldsymbol v
\qquad
\forall\,\boldsymbol v\in H_0^1(\Omega;\mathbb R^d),
\end{equation}
where $Q_{1,h}$ is the elementwise $L^2$-orthogonal projection onto
$\mathbb P_1(\mathcal T_h)$. This commuting property is the central
structural ingredient in the locking-free analysis.

Based on $V_h$, we formulate a symmetric interior penalty
nonconforming finite element method for the primal SGE problem. The
penalty terms are imposed on the full mesh skeleton and control the
jumps of both the elementwise strain and the divergence. The penalty
threshold depends only on the dimension and the shape regularity of the
mesh and is independent of $h$, $\iota$, and $\lambda$. Using the
commuting property~\eqref{intro-commutative}, the interpolation
estimates, and a conforming connection operator, we establish an
$\mathcal O(h)$ energy-norm estimate for fixed $\iota$. For sufficiently
smooth data, we further obtain the complementary parameter-uniform
bound $\mathcal O(\iota^{1/2}+h^2)$,
with a constant independent of both $\iota$ and $\lambda$. Thus, the
method combines a primal formulation and an $H^1$-nonconforming
displacement approximation with optimal convergence and robustness in
both parameter limits.

The same divergence-commuting structure further yields exact
nonconforming finite element Stokes complexes in two and three
dimensions on contractible domains. Related two-dimensional
constructions can be found in
\cite{Mardal2002,Guzman2012,FalkNeilan2013,Guzman2014,
Zhang2016,Guzman2020,chen2022finite}, while three-dimensional
constructions include
\cite{TaiWinther2006,Guzman2012,Neilan2015,MR4621133,
MR4654617,HuangZhang2024}. 

The theoretical convergence and parameter
robustness are examined numerically in both two and three dimensions.
A circular-hole benchmark under uniaxial tension is also considered to
illustrate the applicability of the method to a multiply connected
domain and to compare the computed displacement and Cauchy hoop stress
with the analytical Khakalo--Niiranen solution.

The rest of this paper is organized as follows. Section \ref{sec2} introduces the notation and uniform regularity estimates for the SGE model. Section \ref{sec3} constructs the $H^1$-nonconforming finite element and the associated two- and three-dimensional Stokes complexes. In Section \ref{sec4}, we develop and analyze an optimal and robust IP nonconforming FEM. Finally, Section~\ref{sec5} presents numerical results that verify the theoretical optimality and robustness and demonstrate the applicability of the method to a multiply connected domain.

\section{Preliminaries}\label{sec2}
\subsection{Notation}
Let $\Omega\subset\mathbb{R}^d$ ($d\geq 2$) be a bounded polytope with boundary $\partial\Omega$.
Given a bounded domain $D$ and an integer $m\geq 0$, denote by $H^m(D) $ the standard Sobolev space on $D$ with norm $\|\cdot\|_{m,D}$ and seminorm $|\cdot |_{m, D}$, and $H_0^m( D)$ the closure of $C_0^\infty(D)$ with respect to $\|\cdot\|_{m,D}$. The notation $(\cdot,\cdot)_D$ denotes the $L^2$ inner product on $D$. Referring to \cite{GiraultRaviart1986}, the Sobolev spaces $H(\curl, D)$, $H_0(\curl, D)$, $H(\div, D)$, $H_0(\div, D)$ and $L^2_0(D)$ are defined in the standard way. For $D= \Omega$, we abbreviate $\|\cdot \|_{m, D}$, $|\cdot |_{m, D}$ and $( \cdot , \cdot ) _D$ as $\|\cdot \|_m, |\cdot |_m$ and $(\cdot,\cdot)$, respectively. Denote by $h_D$ the diameter of $D$.

For a $d$-dimensional simplex $T$ with vertices $\texttt{v}_0, \ldots, \texttt{v}_d$, let $\mathcal{F}(T)$, $\mathcal{E}(T)$, $\mathcal{V}(T)$ denote the set of all $(d-1)$-dimensional faces, one-dimensional edges and vertices of $T$, respectively. 
Let $F_i\in\mathcal{F}(T)$ be the $(d-1)$-dimensional face opposite to vertex $\texttt{v}_i$, and $\lambda_i$ be the barycentric coordinate corresponding to vertex $\texttt{v}_i$. Then $\lambda_i(\boldsymbol{x})$ is a linear polynomial and $\lambda_i|_{F_i}=0$.
Let $b_T=\lambda_0 \lambda_1\cdots \lambda_d=\lambda_i b_{F_i}$ be the bubble function of $T$,
where $b_{F_i}$ is the bubble function of $F_i$.

Denote by $\mathcal{T}_h=\{T\}$ a conforming triangulation of $\Omega$ with each element being a simplex, where $h:=\max_{T\in\mathcal{T}_h}h_T$. Let $\mathcal{F}_h$, $\mathring{\mathcal{F}}_h$, $\mathcal{E}_h$, $\mathring{\mathcal{E}}_h$, $\mathcal{V}_h$ and $\mathring{\mathcal{V}}_h$ be the set of all $(d-1)$-dimensional faces, interior $(d-1)$-dimensional faces, edges, interior edges, vertices and interior vertices, respectively. 
For $F\in\mathcal{F}_h$ and $i=1, \ldots, d-1$, let $\boldsymbol{t}^F_i$ be its mutually perpendicular unit tangential vectors and $\boldsymbol{n}_F$ be the unit normal vector. We will abbreviate $\boldsymbol{t}^F_i$ and $\boldsymbol{n}_F$ as $\boldsymbol{t}_i$ and $\boldsymbol{n}$, respectively, if not causing any confusion. In particular, for $e\in\mathcal{E}_h$, we denote its unit tangent vector by $\boldsymbol{t}_e$, abbreviated as $\boldsymbol{t}$.
For a finite set $A$, denote by $|A|$ its cardinality.

Given a face $F\in\mathcal{F}_h$ and a vector $\boldsymbol{v}\in\mathbb{R}^d$, 
let $\mathscr T_F$ denote the tangent space to $F$, and define 
\begin{align*}
\Pi_F\boldsymbol{v} = (\boldsymbol{I}-\boldsymbol{n}_F\boldsymbol{n}^\intercal_F)\boldsymbol{v}
\end{align*}
as the projection of $\boldsymbol{v}$ onto $\mathscr T_F$.
For a $d\times d$ matrix $A$, we denote the skew-symmetric part of $A$ as
$$\skw A:=\dfrac{1}{2}(A-A^\intercal).$$ 
Denote by $\mathbb{K}$ the space of all skew-symmetric $d\times d$ matrices.  

For $F\in\mathring{\mathcal{F}}_h$ shared by two simplices $T^+$ and $T^-$, 
denote by $\boldsymbol{n}^+$ and $\boldsymbol{n}^-$ the unit outward normal to $T^+$ and $T^-$, respectively.
We preset the unit normal vector of $F$ by $\boldsymbol{n}_F=\boldsymbol{n}^+$.
Define the average and jump on $F$ as
$$\{\boldsymbol{v}\}|_F:=\frac{1}{2}(\boldsymbol{v}|_{T^+}+\boldsymbol{v}|_{T^-}), \quad [\![\boldsymbol{v}]\!]|_F:=\boldsymbol{v}|_{T^+}-\boldsymbol{v}|_{T^-}.$$
Set $\{\boldsymbol{v}\}|_F=\boldsymbol{v}|_F$ and $[\![\boldsymbol{v}]\!]|_F=\boldsymbol{v}|_F$ for all $F\in\mathcal F_h\cap\partial\Omega$.


For a bounded domain $D\subset \mathbb{R}^d$ and a non-negative integer $k$, let $\mathbb{P}_k(D)$ stand for the set of all polynomials over $D$ with the total degree no more than $k$. Denote by $Q_{k,D}$ the standard $L^2$ projection operator from $L^2(D)$ to $\mathbb{P}_k(D)$ , whose vectorial/tensorial version is also denoted by $Q_{k,D}$ if there is no confusion.
Let $Q_{k,h}$ be the elementwise version of $Q_{k,D}$ with respect to $\mathcal{T}_h$.
For $s\geq 1$ and integer $k\geq0$, introduce
\begin{align*}
H^s(\mathcal{T}_h)&:=\{v\in L^2(\Omega): v|_T\in H^s(T) \quad \forall \ T\in\mathcal{T}_h\}, \\
\mathbb P_k(\mathcal{T}_h)&:=\{v\in L^2(\Omega): v|_T\in \mathbb P_k(T) \quad \forall \ T\in\mathcal{T}_h\}.	
\end{align*}
Denote 
$$
\mathbb P^c_k(\mathcal{T}_h):=\mathbb P_k(\mathcal{T}_h)\cap H^1(\Omega),\, \mathring{\mathbb P}^c_k(\mathcal{T}_h):=\mathbb P^c_k(\mathcal{T}_h)\cap H^1_0(\Omega),\, \mathring{\mathbb P}_k(\mathcal{T}_h):=\mathbb P_k(\mathcal{T}_h)\cap L^2_0(\Omega).$$
In addition,
for space $\mathbb{B}(D)$, set $\mathbb{B}(D;\mathbb{X}):=\mathbb{B}(D)\otimes\mathbb{X}$ with $\mathbb{X}=\mathbb{R}^d$ or $\mathbb{K}$.

We use $\nabla_h$ and $\boldsymbol{\varepsilon}_h$ to represent the elementwise version of $\nabla$ and $\boldsymbol{\varepsilon}$ with respect to $\mathcal T_h$.
For $\boldsymbol{v}\in H^1(\mathcal{T}_h;\mathbb R^d)\cap H(\div,\Omega)$, denote
\begin{equation*}
\boldsymbol{\sigma}_h(\boldsymbol{v}):=2\mu\boldsymbol{\varepsilon}_h(\boldsymbol{v})+\lambda(\div\boldsymbol{v})\boldsymbol{I}.	
\end{equation*}
For piecewise smooth scalar, vector-valued or tensor-valued function $v$, define the broken squared norm and seminorms
$$
\|v\|_{s,h}^2:=\sum_{T\in\mathcal{T}_h}\|v\|^2_{s,T}, \;\; |v|_{s,h}^2:=\sum_{T\in\mathcal{T}_h}|v|^2_{s,T},\;\;\interleave v\interleave^2_{1,h}:=|v|^2_{1,h}+\sum_{F\in\mathcal F_h}h^{-1}_F\|[\![v]\!]\|^2_{0,F}.$$
For piecewise smooth function $\boldsymbol{v}\in H(\div,\Omega)$, 
we introduce the following discrete norms
\begin{align*}
\|\boldsymbol{v}\|^2_{\iota,\lambda,h}&:=2\mu\|\boldsymbol{\varepsilon}_h(\boldsymbol{v})\|^2_{0}+\lambda\|\div\boldsymbol{v}\|^2_{0}+\iota^2(2\mu|\boldsymbol{\varepsilon}_h(\boldsymbol{v})|^2_{1,h}+\lambda|\div\boldsymbol{v}|^2_{1,h}),\\
\interleave\boldsymbol{v}\interleave^2_{\iota,\lambda,h}&:=2\mu\|\boldsymbol{\varepsilon}_h(\boldsymbol{v})\|^2_{0}+\lambda\|\div\boldsymbol{v}\|^2_{0}+\iota^2(2\mu\interleave\!\boldsymbol{\varepsilon}_h(\boldsymbol{v})\interleave^2_{1,h} + \lambda\interleave\!\div\boldsymbol{v}\interleave^2_{1,h}).
\end{align*}

Throughout the paper, we assume that
\[
0<\mu_0\leq \mu\leq \mu_1,
\qquad
\lambda\geq \lambda_0>0,
\]
where $\mu_0$, $\mu_1$, and $\lambda_0$ are fixed constants.
We use $a\lesssim b$ to mean that
$a\leq Cb$,
where $C>0$ is a generic constant that may depend on
$\mu_0$, $\mu_1$, and $\lambda_0$, but is independent of the mesh size
$h$, the size parameter $\iota$, and the Lam\'e coefficient $\lambda$.
Finally, $a\eqsim b$ means that $a\lesssim b$ and $b\lesssim a$.

Recall some notations in two dimensions. For scalar function $w$ and vector function $\boldsymbol{v} = (v_1, v_2)^\intercal$, denote
\[
\curl w = \left(\partial_2w, -\partial_1w\right)^{\intercal}, \quad 
\rot\boldsymbol{v} = \partial_1v_2 - \partial_2v_1.
\]

\subsection{Weak formulation and regularity results}
The weak formulation of problem \eqref{SGE0} is to find $\boldsymbol{u}\in H^2_0(\Omega;\mathbb{R}^d)$ such that
\begin{align}\label{weak1}
\iota^2a(\boldsymbol{u},\boldsymbol{v})+b(\boldsymbol{u},\boldsymbol{v})=(\boldsymbol{f},\boldsymbol{v}) \quad \forall \ \boldsymbol{v}\in H^2_0(\Omega;\mathbb{R}^d),
\end{align}
where
\begin{equation*}
a(\boldsymbol{u},\boldsymbol{v}):=(\nabla \boldsymbol{\sigma}(\boldsymbol{u}), \nabla \boldsymbol{\varepsilon}(\boldsymbol{v})), \quad
b(\boldsymbol{u},\boldsymbol{v}):=(\boldsymbol{\sigma}(\boldsymbol{u}), \boldsymbol{\varepsilon}(\boldsymbol{v})).
\end{equation*}

Taking $\iota=0$,  problem \eqref{SGE0} becomes the linear elasticity problem
\begin{equation}\label{SGElinear}
\begin{cases}
-2\mu\div(\boldsymbol{\varepsilon}(\boldsymbol{u}_0))-\lambda \nabla\div\boldsymbol{u}_0=\boldsymbol{f} &\mbox{in} \ \Omega,\\
\boldsymbol{u}_0=0 &\mbox{on} \ \partial\Omega.
\end{cases}
\end{equation}
We assume the linear elasticity problem \eqref{SGElinear} has the $s$-regularity with $2\leq s\leq 3$
\begin{equation}\label{elasregularity}
\|\boldsymbol{u}_0\|_s+\lambda\|\div\boldsymbol{u}_0\|_{s-1}\lesssim\|\boldsymbol f\|_{s-2}.
\end{equation}
When $\Omega$ is convex in two and three dimensions, the regularity \eqref{elasregularity} for $s=2$ can be found in \cite{brenner1992linear,MR2641539}.

In this paper, we always assume problem \eqref{SGE0} has the regularity
\begin{align}
\label{Regularity-u}
|\boldsymbol{u}-\boldsymbol{u}_0|_1+\iota\|\boldsymbol{u}\|_2+\iota^2\|\boldsymbol{u}\|_3&\lesssim\iota^{1/2}\|\boldsymbol{f}\|_0,\\
\label{Regularity-divu}
\lambda\|\div(\boldsymbol{u}-\boldsymbol{u}_0)\|_0+\lambda\iota|\div\boldsymbol{u}|_1+\lambda\iota^2\|\div\boldsymbol{u}\|_2&\lesssim\iota^{1/2}\|\boldsymbol{f}\|_0.
\end{align}
When $\Omega$ is convex, the regularity results \eqref{Regularity-u}--\eqref{Regularity-divu} were proved in \cite{ChenHuangHuang2023} under the following assumptions:
\begin{assumption}
\label{assumption1}
Assume that $\boldsymbol{f}\in H^{-1}(\Omega;\mathbb{R}^d)$. Let $(\boldsymbol{\phi},p)\in H^2_0(\Omega;\mathbb{R}^d)\times(H^1_0(\Omega)\cap L^2_0(\Omega))$ be the solution of the following problem
\begin{align*}
\left\{
\begin{array}{ll}
\Delta^2\boldsymbol{\phi}+\nabla\Delta p=\boldsymbol{f} &\mbox{in} \ \Omega,\\
\Delta\div\boldsymbol{\phi}=0 &\mbox{in} \ \Omega,\\
\boldsymbol{\phi}=\partial_{n}\boldsymbol{\phi}=\boldsymbol{0} &\mbox{on} \ \partial\Omega.
\end{array}
\right.
\end{align*}
Then $\boldsymbol{\phi}\in H^3(\Omega;\mathbb{R}^d)$ and $p\in H^2(\Omega)$, which admit the estimate
\begin{equation*}
\|\boldsymbol{\phi}\|_3+\|p\|_2\lesssim\|\boldsymbol{f}\|_{-1}.
\end{equation*}
\end{assumption}
\begin{assumption}
\label{assumption2}
Assume that $\boldsymbol{f}\in H^{-1}(\Omega;\mathbb{R}^d)$, Lam\'{e} coefficients $\lambda\in[0,\Lambda]$ and $\mu\in[\mu_0,\mu_1]$ with $\Lambda,\mu_0,\mu_1>0$. Let $\boldsymbol{u}\in H^{2}(\Omega;\mathbb{R}^d)$ be the solution of the following problem:
\begin{align*}
\left\{
\begin{array}{ll}
\div\Delta(2\mu\boldsymbol{\varepsilon}(\boldsymbol{u})+\lambda(\div\boldsymbol{u})\boldsymbol{I})=\boldsymbol{f} &\mbox{in} \ \Omega,\\
\boldsymbol{u}=\partial_{n}\boldsymbol{u}=\boldsymbol{0} &\mbox{on} \ \partial\Omega.
\end{array}
\right.
\end{align*}
Then $\boldsymbol{u}\in H^{3}(\Omega;\mathbb{R}^d)$ and it admits the estimate
$$\|\boldsymbol{u}\|_3\lesssim\|\boldsymbol{f}\|_{-1},$$
where the hidden constant may depend on $\Lambda$, $\mu_0$ and $\mu_1$.
\end{assumption}

Recall the following regularity results of the fourth-order problem in \cite[Lemma 3.7]{ChenHuangHuang2023}.
\begin{lemma}
Let $\boldsymbol{w}\in H^2_0(\Omega;\mathbb{R}^d)$ be the solution of 
\begin{align*}
\left\{
\begin{array}{ll}
\div\Delta(2\mu\boldsymbol{\varepsilon}(\boldsymbol{w})+\lambda(\div\boldsymbol{w})\boldsymbol{I})=\boldsymbol{f} &\mbox{in} \ \Omega,\\
\boldsymbol{w}=\partial_{n}\boldsymbol{w}=\boldsymbol{0} &\mbox{on} \ \partial\Omega,
\end{array}
\right.
\end{align*}
with $\boldsymbol{f}\in H^{-1}(\Omega;\mathbb{R}^d)$. Suppose Assumptions \ref{assumption1}--\ref{assumption2} hold. We have
\begin{align}
\label{fourregularity1}
&\|\boldsymbol{w}\|_{2+j}+\lambda\|\div\boldsymbol{w}\|_{1+j}\lesssim\|\boldsymbol{f}\|_{j-2} \quad \mbox{for}~j=0,1,\\
\label{fourregularity2}
&\|\boldsymbol{w}\|_3\lesssim\inf_{q\in L^2(\Omega)}\|\boldsymbol{f}+\nabla q\|_{-1}+\dfrac{1}{\lambda}\|\boldsymbol{f}\|_{-1}.
\end{align}
\end{lemma}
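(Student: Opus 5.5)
The plan is to split the fourth-order Lamé-type problem into a Stokes-like part, handled by Assumption~\ref{assumption1}, and a compressible part, handled by Assumption~\ref{assumption2}, keeping every constant independent of $\lambda$.

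\textbf{Small $\lambda$.} If $\lambda\le\Lambda$ for a fixed $\Lambda$, then Assumption~\ref{assumption2} gives $\|\boldsymbol w\|_3\lesssim\|\boldsymbol f\|_{-1}$ directly, and $\lambda\|\div\boldsymbol w\|_2\lesssim\|\boldsymbol w\|_3$. For $j=0$ I would test the weak form $(\nabla\boldsymbol\sigma(\boldsymbol w),\nabla\boldsymbol\varepsilon(\boldsymbol v))=\langle\boldsymbol f,\boldsymbol v\rangle$ with $\boldsymbol v=\boldsymbol w$. Korn's inequality on $H_0^2$ gives
\[
\|\boldsymbol w\|_2^2+\lambda|\div\boldsymbol w|_1^2\lesssim\|\boldsymbol f\|_{-2}\|\boldsymbol w\|_2 .
\]
This already yields $\|\boldsymbol w\|_2\lesssim\|\boldsymbol f\|_{-2}$, but only $\sqrt{\lambda}\,|\div\boldsymbol w|_1\lesssim\|\boldsymbol f\|_{-2}$. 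So the real work is large $\lambda$.

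\textbf{Large $\lambda$.} Introduce the pressure $p:=-\lambda\Delta\div\boldsymbol w$, suitably normalized. Using $2\div\Delta\boldsymbol\varepsilon(\boldsymbol w)=\Delta^2\boldsymbol w+\nabla\Delta\div\boldsymbol w$, the equation becomes
\[
\mu\Delta^2\boldsymbol w+\mu\nabla\Delta\div\boldsymbol w-\nabla p=\boldsymbol f ,
\]
a perturbed Stokes-type system with constraint $\div\boldsymbol w=-\lambda^{-1}\Delta^{-1}p$ in a suitable sense. It is more convenient to work variationally with the saddle point problem on $H_0^2(\Omega;\mathbb R^d)\times(H_0^1\cap L_0^2)$, with $\Delta$-type pairing $(\nabla\div\boldsymbol v,\nabla q)$ and a penalty $\lambda^{-1}(\nabla p,\nabla q)$. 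The inf-sup condition for $\div:H_0^2\to H_0^1\cap L_0^2$ then gives $\|\boldsymbol w\|_2+\lambda|\div\boldsymbol w|_1\lesssim\|\boldsymbol f\|_{-2}$ uniformly in $\lambda$, which is the case $j=0$.

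For $j=1$ I would decompose $\boldsymbol w=\boldsymbol\phi+\boldsymbol r$. Here $(\boldsymbol\phi,p_0)$ solves the problem of Assumption~\ref{assumption1}, with data $\boldsymbol f$ scaled by $\mu^{-1}$, so $\|\boldsymbol\phi\|_3+\|p_0\|_2\lesssim\|\boldsymbol f\|_{-1}$ and $\div\boldsymbol\phi=0$, since $\Delta\div\boldsymbol\phi=0$ and $\div\boldsymbol\phi\in H_0^1$. The remainder $\boldsymbol r$ then solves the Assumption~\ref{assumption2} operator with right-hand side $\boldsymbol f-\mu\Delta^2\boldsymbol\phi=\nabla\Delta(\mu p_0)$, a gradient. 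For gradient data $\nabla g$, I would compare $\boldsymbol r$ with the fixed-$\lambda$ problem ($\lambda=\Lambda$). The difference is driven by $(\lambda-\Lambda)\nabla\Delta\div\boldsymbol r$, and combining this with the $j=0$ estimate at one higher order should give $\|\boldsymbol r\|_3+\lambda\|\div\boldsymbol r\|_2\lesssim\|\boldsymbol f\|_{-1}$.

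Estimate \eqref{fourregularity2} follows by the same splitting applied to $\boldsymbol f+\nabla q$, the only change being that $\boldsymbol f$ itself enters only through the $\lambda^{-1}$-weighted part. The gradient part $\nabla q$ is absorbed into the pressure, and its effect on $\boldsymbol w$ is of size $\lambda^{-1}\|\boldsymbol f\|_{-1}$, read off from the constraint $\lambda\div\boldsymbol w\approx$ pressure. I would then take the infimum over $q$.

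\textbf{Main obstacle.} The hardest step is the $H^3$ bound for $\lambda\to\infty$ with gradient data. I expect to need an iteration in which $\div\boldsymbol r$ satisfies $(2\mu+\lambda)\Delta^2\div\boldsymbol r=\div\boldsymbol f$-type relations with boundary data coupled to $\boldsymbol r$, and to extract the factor $\lambda^{-1}$ there. I would first try to obtain it from the $\lambda$-uniform inf-sup argument at the $H^3\times H^2$ level, invoking Assumption~\ref{assumption1} as the Stokes regularity.
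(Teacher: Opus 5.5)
The paper gives no proof of this lemma. It imports it from \cite[Lemma 3.7]{ChenHuangHuang2023}, so your argument has to stand on its own. Write $L_\lambda\boldsymbol v:=\mu\Delta^2\boldsymbol v+(\mu+\lambda)\nabla\Delta\div\boldsymbol v$.

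\textbf{What works.} Your case $j=0$ is fine. It is the same argument the paper uses for \eqref{Regularity-u-2}: coercivity on $H_0^2$, then testing with $\boldsymbol v\in H_0^2$ such that $\div\boldsymbol v=\div\boldsymbol w$ and $|\boldsymbol v|_2\lesssim|\div\boldsymbol w|_1$, from \cite{MR2609313}. Your splitting $\boldsymbol w=\boldsymbol\phi+\boldsymbol r$, with $L_\lambda\boldsymbol r=\nabla g$ and $g=\mu\Delta p_0$, is also a correct reduction. Because $\boldsymbol\phi$ is unchanged when $\boldsymbol f$ is replaced by $\boldsymbol f+\nabla q$, both \eqref{fourregularity1} for $j=1$ and \eqref{fourregularity2} reduce to the claim $\lambda(\|\boldsymbol r\|_3+\|\div\boldsymbol r\|_2)\lesssim\|g\|_0$ for large $\lambda$. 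A small fix: take $p=(\mu+\lambda)\div\boldsymbol w\in H_0^1\cap L_0^2$, not $-\lambda\Delta\div\boldsymbol w$, so that it matches your saddle-point formulation.

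\textbf{Where it breaks.} That claim is the whole difficulty, and your sketch does not prove it. Comparing with the $\Lambda$-problem gives $L_\Lambda(\boldsymbol r-\boldsymbol r_\Lambda)=-(\lambda-\Lambda)\nabla\Delta\div\boldsymbol r$. Assumption~\ref{assumption2} then bounds $\|\boldsymbol r-\boldsymbol r_\Lambda\|_3$ only by $C_\Lambda\lambda\|\div\boldsymbol r\|_2$, which is the quantity you want to control, with no small factor available to absorb it. ``The $j=0$ estimate at one higher order'' is simply the $j=1$ estimate. The scalar relation $(2\mu+\lambda)\Delta^2\div\boldsymbol r=\Delta g$ does not help by itself either: you know $\div\boldsymbol r|_{\partial\Omega}=0$, but not $\partial_n\div\boldsymbol r$.

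\textbf{The missing ingredient.} You need an $H^3\times H^2$ regularity estimate for the Stokes-type system with a \emph{nonzero} divergence constraint, for instance
\[
\|\boldsymbol\psi\|_3+\|\pi\|_2\lesssim\|\mu\Delta^2\boldsymbol\psi+\nabla\Delta\pi\|_{-1}+\|\div\boldsymbol\psi\|_2,\qquad (\boldsymbol\psi,\pi)\in H_0^2(\Omega;\mathbb R^d)\times(H_0^1(\Omega)\cap L_0^2(\Omega)).
\]
Your ``uniform inf-sup argument at the $H^3\times H^2$ level'' tacitly presupposes this, because the $\lambda^{-1}$-perturbation enters through the constraint equation. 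Assumption~\ref{assumption1} covers only $\div\boldsymbol\psi=0$. Assumption~\ref{assumption2} cannot fill the gap, since its constant depends on $\Lambda$.

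Passing from the homogeneous to the inhomogeneous case requires a bounded right inverse of $\div$ that maps $\div(H^3\cap H_0^2)\subset H^2\cap H_0^1\cap L_0^2$, normed in $H^2$, back into $H^3\cap H_0^2$. The operator of \cite{MR2609313} does not provide this. It maps $H_0^k$ into $H_0^{k+1}$, whereas $\partial_n\div\boldsymbol w\neq0$ in general, so $\div\boldsymbol w\notin H_0^2$.

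\textbf{How it would close.} Once such an estimate is available, the proof is short.
\begin{enumerate}
\item Apply it to $(\boldsymbol w,(\mu+\lambda)\div\boldsymbol w)$. This gives $\|\boldsymbol w\|_3+(\mu+\lambda)\|\div\boldsymbol w\|_2\lesssim\|\boldsymbol f\|_{-1}+\|\div\boldsymbol w\|_2$. Absorb the last term for $\lambda\geq\Lambda$, and use Assumption~\ref{assumption2} for $\lambda\le\Lambda$.
\item Apply it to $(\boldsymbol w-\boldsymbol\phi,(\mu+\lambda)\div\boldsymbol w-\mu p_0)$. This gives $\|\boldsymbol w-\boldsymbol\phi\|_3\lesssim\|\div\boldsymbol w\|_2\lesssim\lambda^{-1}\|\boldsymbol f\|_{-1}$. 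Combined with $\|\boldsymbol\phi\|_3\lesssim\inf_q\|\boldsymbol f+\nabla q\|_{-1}$, this is \eqref{fourregularity2}.
\end{enumerate}
Without that estimate, or a proof of it, your proposal establishes only the case $j=0$.
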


Next, we refine the regularity results \eqref{Regularity-u}--\eqref{Regularity-divu} by replacing the right-hand side term $\iota^{1/2}\|\boldsymbol{f}\|_0$ with $\|\boldsymbol{f}\|_{-1}$.
\begin{lemma}
Suppose Assumptions \ref{assumption1}--\ref{assumption2} hold. Let $\boldsymbol{u}\in H^2_0(\Omega;\mathbb{R}^d)$ be the solution of the problem~\eqref{SGE0}. It holds
\begin{equation}
\label{Regularity-u-2}
\iota^2(\|\boldsymbol{u}\|_3+\lambda\|\div\boldsymbol{u}\|_2)+\iota(\|\boldsymbol{u}\|_2+\lambda\|\div\boldsymbol{u}\|_1)+\|\boldsymbol{u}\|_1+\lambda\|\div\boldsymbol{u}\|_0\lesssim\|\boldsymbol{f}\|_{-1}.
\end{equation}
\end{lemma}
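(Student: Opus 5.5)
We will prove \eqref{Regularity-u-2} by combining an energy estimate for the lowest-order terms with the fourth-order regularity \eqref{fourregularity1}--\eqref{fourregularity2}, treating the second-order part of the operator as a right-hand side.

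\textbf{Step 1 (energy estimate).} Take $\boldsymbol v=\boldsymbol u$ in \eqref{weak1}. Since $a(\boldsymbol u,\boldsymbol u)=2\mu|\boldsymbol\varepsilon(\boldsymbol u)|_1^2+\lambda|\div\boldsymbol u|_1^2\ge0$, we get
\[
\iota^2\big(2\mu|\boldsymbol\varepsilon(\boldsymbol u)|_1^2+\lambda|\div\boldsymbol u|_1^2\big)+2\mu\|\boldsymbol\varepsilon(\boldsymbol u)\|_0^2+\lambda\|\div\boldsymbol u\|_0^2\le\|\boldsymbol f\|_{-1}\|\boldsymbol u\|_1 .
\]
Korn's inequality on $H_0^1$ gives $\|\boldsymbol u\|_1\lesssim\|\boldsymbol\varepsilon(\boldsymbol u)\|_0$, hence $\|\boldsymbol u\|_1+\sqrt{\lambda}\|\div\boldsymbol u\|_0+\iota\|\boldsymbol u\|_2+\iota\sqrt\lambda|\div\boldsymbol u|_1\lesssim\|\boldsymbol f\|_{-1}$. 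For $|\boldsymbol\varepsilon(\boldsymbol u)|_1$ controlling $|\boldsymbol u|_2$ we use $\partial_{ij}u_k=\partial_i\varepsilon_{jk}+\partial_j\varepsilon_{ik}-\partial_k\varepsilon_{ij}$. This only yields $\sqrt\lambda$ weights, not $\lambda$.

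\textbf{Step 2 (the $\lambda$-weighted zeroth order term).} To upgrade to $\lambda\|\div\boldsymbol u\|_0$, we will use the $\lambda$-robust inf-sup structure. Write $p=\lambda\div\boldsymbol u-\lambda\iota^2\Delta\div\boldsymbol u$ as a "pressure''. Then the equation reads $-\div((\boldsymbol I-\iota^2\Delta)2\mu\boldsymbol\varepsilon(\boldsymbol u))-\nabla p=\boldsymbol f$. A cleaner route is to rewrite \eqref{SGE0} as the fourth-order problem of the preceding lemma,
\[
\div\Delta\boldsymbol\sigma(\boldsymbol u)=\iota^{-2}\big(\boldsymbol f+\div\boldsymbol\sigma(\boldsymbol u)\big)=:\boldsymbol g,
\]
and apply \eqref{fourregularity1} with $j=0,1$, giving $\iota^2(\|\boldsymbol u\|_3+\lambda\|\div\boldsymbol u\|_2)\lesssim\|\boldsymbol f\|_{-1}+\|\boldsymbol\sigma(\boldsymbol u)\|_0$ and $\iota^2(\|\boldsymbol u\|_2+\lambda\|\div\boldsymbol u\|_1)\lesssim\|\boldsymbol f\|_{-2}+\|\boldsymbol\sigma(\boldsymbol u)\|_{-1}$. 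Everything therefore reduces to bounding $\|\boldsymbol\sigma(\boldsymbol u)\|_0\lesssim\|\boldsymbol u\|_1+\lambda\|\div\boldsymbol u\|_0$, that is, to bounding $\lambda\|\div\boldsymbol u\|_0$ by $\|\boldsymbol f\|_{-1}$.

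For this we use the Stokes-type inf-sup condition. Since $\div\boldsymbol u\in H_0^1\cap L_0^2$, for $q=\lambda\div\boldsymbol u$ there is $\boldsymbol v\in H_0^2(\Omega;\mathbb R^d)$ with $\div\boldsymbol v=q$ and $\|\boldsymbol v\|_1\lesssim\|q\|_0$, $\|\boldsymbol v\|_2\lesssim\|q\|_1$. This is the surjectivity of $\div:H_0^2\to H_0^1\cap L_0^2$ from the short complex in the introduction. Testing \eqref{weak1} with this $\boldsymbol v$ gives
\[
\|q\|_0^2+\iota^2|q|_1^2=(\boldsymbol f,\boldsymbol v)-2\mu(\boldsymbol\varepsilon(\boldsymbol u),\boldsymbol\varepsilon(\boldsymbol v))-2\mu\iota^2(\nabla\boldsymbol\varepsilon(\boldsymbol u),\nabla\boldsymbol\varepsilon(\boldsymbol v)).
\]
The right-hand side is bounded by
\[
\big(\|\boldsymbol f\|_{-1}+\|\boldsymbol u\|_1\big)\|q\|_0+\iota^2|\boldsymbol u|_2\|q\|_1 .
\]
Using Step 1 ($\iota|\boldsymbol u|_2\lesssim\|\boldsymbol f\|_{-1}$) and Young's inequality, we obtain $\lambda\|\div\boldsymbol u\|_0+\iota\lambda|\div\boldsymbol u|_1\lesssim\|\boldsymbol f\|_{-1}$. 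This covers the first-order $\lambda$-terms as well.

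\textbf{Step 3 (third-order terms).} With $\|\boldsymbol\sigma(\boldsymbol u)\|_0\lesssim\|\boldsymbol f\|_{-1}$ in hand, \eqref{fourregularity1} with $j=1$ applied to $\boldsymbol g$ yields $\|\boldsymbol u\|_3+\lambda\|\div\boldsymbol u\|_2\lesssim\|\boldsymbol g\|_{-1}\lesssim\iota^{-2}\|\boldsymbol f\|_{-1}$, which completes the estimate.

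\textbf{Main obstacle.} The main difficulty is Step 2: the $\lambda$-uniform bound on $\lambda\|\div\boldsymbol u\|_0$. It requires the $H_0^2$ right inverse of the divergence with simultaneous $H^1$ and $H^2$ bounds. We will justify this via a regularized Bogovskiĭ operator, which is available on Lipschitz domains.
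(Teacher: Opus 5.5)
Your proof is correct, but the decisive step, the $\lambda$-weighted bound on $\div\boldsymbol u$, follows a different route from the paper's.

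The paper first applies the $\lambda$-robust estimate \eqref{fourregularity2}, choosing the pressure to absorb $\lambda\nabla\div\boldsymbol u$. This gives $\iota^2\|\boldsymbol u\|_3\lesssim\|\boldsymbol f\|_{-1}$ before anything is known about $\lambda\|\div\boldsymbol u\|_0$. It then tests \eqref{fourth} with $\boldsymbol v\in H_0^2(\Omega;\mathbb R^d)$ satisfying $\div\boldsymbol v=\div\boldsymbol u$, and integrates the $\iota^2$-term by parts into $(\Delta\boldsymbol\varepsilon(\boldsymbol u),\boldsymbol\varepsilon(\boldsymbol v))$. As a result it needs only the bound $|\boldsymbol v|_1\lesssim\|\div\boldsymbol u\|_0$.

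You instead keep the $\iota^2$-term in weak form. You pay for this with the $H^2$-bound $\|\boldsymbol v\|_2\lesssim\|q\|_1$ of the regularized Bogovski\u{\i} right inverse, and absorb $\iota|q|_1$ by Young's inequality using only the energy bound $\iota|\boldsymbol u|_2\lesssim\|\boldsymbol f\|_{-1}$. Your scaling $q=\lambda\div\boldsymbol u$, versus the paper's $\div\boldsymbol v=\div\boldsymbol u$, is immaterial.

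What your route buys is that \eqref{fourregularity2} is never needed. The entire lower-order part,
$\|\boldsymbol u\|_1+\lambda\|\div\boldsymbol u\|_0+\iota(\|\boldsymbol u\|_2+\lambda\|\div\boldsymbol u\|_1)\lesssim\|\boldsymbol f\|_{-1}$,
follows from the weak formulation alone, using $\iota\le 1$ for the term $\iota\lambda\|\div\boldsymbol u\|_0$. Assumptions~\ref{assumption1}--\ref{assumption2} then enter only through \eqref{fourregularity1} with $j=1$ in your Step 3. The paper's route, conversely, uses only the $H^1$-stability of the divergence right inverse, but relies on the stronger $\lambda$-robust $H^3$-regularity \eqref{fourregularity2}.

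The $j=0$ application of \eqref{fourregularity1} in your Step 2 is never used and can be dropped.
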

\begin{proof}
From \eqref{weak1}, it holds 
\begin{equation*}
2\mu(\iota^2|\boldsymbol{\varepsilon}(\boldsymbol{u})|^2_1+\|\boldsymbol{\varepsilon}(\boldsymbol{u})\|^2_0)\leq (\boldsymbol{f},\boldsymbol{u})\leq \|\boldsymbol{f}\|_{-1}\|\boldsymbol{u}\|_1,
\end{equation*}
which implies
\begin{equation}
\label{Regularity-u-2-pf1}
\iota\|\boldsymbol{u}\|_2+\|\boldsymbol{u}\|_1\lesssim\|\boldsymbol{f}\|_{-1}.
\end{equation}
By \eqref{SGE0}, we have
\begin{equation}\label{fourth}
\div\Delta\boldsymbol{\sigma}(\boldsymbol{u}) = \iota^{-2}(\boldsymbol{f}+\div\boldsymbol{\sigma}(\boldsymbol{u})).
\end{equation}
Then it follows from \eqref{fourregularity1}--\eqref{fourregularity2} and \eqref{Regularity-u-2-pf1} that
\begin{align}
\label{Regularity-u-2-pf2}
&\iota^2(\|\boldsymbol{u}\|_3+\lambda\|\div\boldsymbol{u}\|_2)\lesssim \|\boldsymbol{f}\|_{-1}+\|\boldsymbol{u}\|_1+\lambda\|\div\boldsymbol{u}\|_0\lesssim \|\boldsymbol{f}\|_{-1}+\lambda\|\div\boldsymbol{u}\|_0,\\
\label{Regularity-u-2-pf3}
&\iota^2\|\boldsymbol{u}\|_3\lesssim (1+\dfrac{1}{\lambda})(\|\boldsymbol{f}\|_{-1}+\|\boldsymbol{u}\|_{1})+\|\div\boldsymbol{u}\|_0\lesssim\|\boldsymbol{f}\|_{-1}+\|\boldsymbol{u}\|_{1}\lesssim\|\boldsymbol{f}\|_{-1}.
\end{align}
According to \cite{MR2609313}, there exists a $\boldsymbol{v}\in H^2_0(\Omega;\mathbb{R}^d)$ such that 
\begin{equation}
\label{Regularity-u-2-pf4}
\div\boldsymbol{v} = \div\boldsymbol{u}, \quad \quad |\boldsymbol{v}|_1\lesssim \|\div\boldsymbol{u}\|_0, \quad \quad |\boldsymbol{v}|_2\lesssim |\div\boldsymbol{u}|_1.
\end{equation}
Multiply \eqref{fourth} by $\boldsymbol{v}$, and apply the integration by parts and the Cauchy--Schwarz inequality to get
\begin{align*}
&\lambda\iota^2|\div\boldsymbol{u}|^2_1+\lambda\|\div\boldsymbol{u}\|^2_0=\lambda\iota^2(\nabla\div\boldsymbol{u},\nabla\div\boldsymbol{v})+\lambda(\div\boldsymbol{u},\div\boldsymbol{v})\\
=&(\boldsymbol{f},\boldsymbol{v})+2\mu\big(\iota^2(\Delta\boldsymbol{\varepsilon}(\boldsymbol{u}),\boldsymbol{\varepsilon}(\boldsymbol{v}))-(\boldsymbol{\varepsilon}(\boldsymbol{u}),\boldsymbol{\varepsilon}(\boldsymbol{v}))\big)\lesssim(\|\boldsymbol{f}\|_{-1}+\iota^2|\boldsymbol{u}|_3+|\boldsymbol{u}|_1)|\boldsymbol{v}|_1,
\end{align*}
which together with \eqref{Regularity-u-2-pf1} and \eqref{Regularity-u-2-pf3}--\eqref{Regularity-u-2-pf4} yields
\begin{equation*}
\lambda\iota^2|\div\boldsymbol{u}|^2_1+\lambda\|\div\boldsymbol{u}\|^2_0\lesssim \|\boldsymbol{f}\|_{-1}\|\div\boldsymbol{u}\|_0.
\end{equation*}
Hence, it follows
\begin{equation}
\label{Regularity-u-2-pf5}
\lambda\iota|\div\boldsymbol{u}|_1+\lambda\|\div\boldsymbol{u}\|_0\lesssim \|\boldsymbol{f}\|_{-1}.
\end{equation}
Therefore, \eqref{Regularity-u-2} holds from \eqref{Regularity-u-2-pf1}, \eqref{Regularity-u-2-pf2} and \eqref{Regularity-u-2-pf5}.
\end{proof}
\section{Discrete Spaces and Complexes}\label{sec3}
In this section we will construct an $H^1$-nonconforming quadratic finite element and develop the corresponding interpolation operator along with its interpolation error estimates. Moreover, we establish nonconforming finite element Stokes complexes in two and three dimensions. For the latter constructions, we additionally assume that \(\Omega\) is contractible.

\subsection{$H^1$-nonconforming finite element and interpolation operator}\label{NCFEM}
Let $T$ be a $d$-dimensional simplex with $d\geq 2$. With the notation introduced above, take
\begin{align*}
V(T):=&\mathbb{P}_{2}(T;\mathbb{R}^{d})
\oplus \Oplus_{i=0}^{d}\Oplus_{j=1}^{d-1}\div(\skw(\boldsymbol{n}_{F_i}\otimes\boldsymbol{t}_j^{F_i})b_{T}b_{F_i}\mathbb{P}_{1}(F_i))
\end{align*}
as the space of shape functions. Clearly, $V(T)\subset \mathbb{P}_{2d+1}(T;\mathbb{R}^{d})$, $\div V(T) = \mathbb{P}_{1}(T)$, and $(V(T)\cdot\boldsymbol{n})|_{F_i}\in\mathbb P_2(F_i)$ for $i=0,1,\ldots,d$.
It is easy to see that 
$$\dim V(T)=d\left(\begin{matrix}d+2\\2\end{matrix}\right)+(d-1)d(d+1)=\frac{3}{2}d^2(d+1).$$
\begin{remark}\rm
In two dimensions,
\begin{align*}
\div\big(\skw(\boldsymbol{n}_{F_i}\otimes\boldsymbol{t}_j^{F_i})b_{T}b_{F_i}\mathbb{P}_{1}(F_i)\big)=\curl(b_{T}b_{F_i}\mathbb{P}_{1}(F_i)).
\end{align*}
In three dimensions,
\begin{align*}
\div\big(\skw(\boldsymbol{n}_{F_i}\otimes\boldsymbol{t}_j^{F_i})b_{T}b_{F_i}\mathbb{P}_{1}(F_i)\big)=\curl(b_{T}b_{F_i}\mathbb{P}_{1}(F_i)\boldsymbol{t}_{3-j}^{F_i}).
\end{align*}
\end{remark}
The degrees of freedom (DoFs) are chosen as
\begin{subequations}\label{dof}
\begin{align}
\label{dof1}
\int_{F}\boldsymbol{v}\cdot \boldsymbol{n} \ q \ds,& \quad \ q\in \mathbb{P}_2(F), F\in\mathcal{F}(T),\\
\label{dof2}
\int_{F}\boldsymbol{v}\cdot\boldsymbol{t}_i \ q \ds,& \quad \ q\in\mathbb{P}_1(F), F\in\mathcal{F}(T), i = 1,2,\cdots,d-1,\\
\label{dof3}
\int_{T}\boldsymbol{v}\cdot\boldsymbol{q} \dx,& \quad \ \boldsymbol{q}\in\mathbb{P}_0(T;\mathbb{R}^d),\\
\label{dof4}
\dfrac 1{d+1}\sum_{i=0}^{d}(\skw\nabla\boldsymbol{v})(\texttt{v}_{i}).&
\end{align}
\end{subequations} 
\begin{lemma}
\label{dof-uni-solvent}
The DoFs  \eqref{dof} are unisolvent for $V(T)$.
\end{lemma}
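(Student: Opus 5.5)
The plan is the usual one: compare dimensions, then show that any $\boldsymbol v\in V(T)$ with all DoFs \eqref{dof} equal to zero must vanish. The DoFs number
\[
(d+1)\tfrac{d(d+1)}{2}+(d+1)d(d-1)+d+\tfrac{d(d-1)}{2}
= d\binom{d+2}{2}+(d-1)d(d+1),
\]
which equals $\dim V(T)$ as computed above. So unisolvence reduces to injectivity. Write
\[
\boldsymbol v=\boldsymbol p+\sum_{i=0}^d\sum_{j=1}^{d-1}\div\big(\skw(\boldsymbol n_{F_i}\otimes\boldsymbol t_j^{F_i})\,b_Tb_{F_i}q_{ij}\big),
\qquad \boldsymbol p\in\mathbb P_2(T;\mathbb R^d),\ q_{ij}\in\mathbb P_1(F_i).
\]

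\emph{Traces of the bubbles.} Fix a face $F_k$.
\begin{itemize}
\item For $k\neq i$, the factor $\lambda_k$ appears squared in $b_Tb_{F_i}$, so $\nabla(b_Tb_{F_i}q_{ij})$ vanishes on $F_k$.
\item On $F_i$ itself, $\nabla(b_Tb_{F_i}q_{ij})|_{F_i}=b_{F_i}^2q_{ij}\nabla\lambda_i$, and $\nabla\lambda_i=-|\nabla\lambda_i|\boldsymbol n_{F_i}$.
\item Since $\skw(\boldsymbol n\otimes\boldsymbol t)\boldsymbol n=-\tfrac12\boldsymbol t$, the trace of the $(i,j)$ bubble on $F_i$ is $c_i\,b_{F_i}^2q_{ij}\boldsymbol t_j^{F_i}$ with $c_i=\tfrac12|\nabla\lambda_i|\neq0$.
\end{itemize}
Hence the bubbles have zero normal trace on $\partial T$, and on $F_i$ they contribute a purely tangential, face-local term. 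They also have zero mean over $T$, by the divergence theorem applied to a tensor vanishing on $\partial T$, and they are divergence free since the tensor is skew-symmetric.

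\emph{Normal and interior DoFs.} By the above, \eqref{dof1} acts on $\boldsymbol p$ alone, and $\boldsymbol p\cdot\boldsymbol n|_{F}\in\mathbb P_2(F)$. So the vanishing of \eqref{dof1} gives $\boldsymbol v\cdot\boldsymbol n=\boldsymbol p\cdot\boldsymbol n=0$ on $\partial T$. Similarly \eqref{dof3} gives $\int_T\boldsymbol p\dx=0$. For $q\in\mathbb P_1(T)$, integration by parts gives $(\div\boldsymbol p,q)_T=-(\boldsymbol p,\nabla q)_T=0$. Since $\div\boldsymbol p\in\mathbb P_1(T)$, this forces $\div\boldsymbol v=\div\boldsymbol p=0$.

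\emph{Tangential and skew DoFs.} On $F_i$, \eqref{dof2} reads
\[
\int_{F_i}\big(\boldsymbol p\cdot\boldsymbol t_j+c_ib_{F_i}^2q_{ij}\big)r\ds=0\qquad\forall r\in\mathbb P_1(F_i).
\]
The form $(q,r)\mapsto\int_{F_i}b_{F_i}^2qr$ is an inner product on $\mathbb P_1(F_i)$. Therefore each $q_{ij}$ is uniquely determined by $\boldsymbol p$ as minus a weighted $L^2$ projection of $\boldsymbol p\cdot\boldsymbol t_j/c_i$. It remains to show $\boldsymbol p=0$, which then gives $q_{ij}=0$ as well. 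The candidates for $\boldsymbol p$ lie in
\[
W:=\{\boldsymbol p\in\mathbb P_2(T;\mathbb R^d):\ \boldsymbol p\cdot\boldsymbol n|_{\partial T}=0,\ \div\boldsymbol p=0,\ \textstyle\int_T\boldsymbol p=0\}.
\]
Within $W$, we still have to impose the vanishing of \eqref{dof4}. We must also impose that the tangential trace of $\boldsymbol p$ on each face is $L^2$-orthogonal to $\mathbb P_1(F)$ up to the $b_F^2\mathbb P_1$ correction, which is automatically absorbed by the $q_{ij}$. So the tangential DoFs put no further constraint on $\boldsymbol p$, and \eqref{dof4} must carry the remaining burden.

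\emph{Main obstacle.} The crux is characterizing $W$: I expect $W$ to be small. In 2D, a divergence-free field with zero normal trace is $\boldsymbol p=\curl\phi$ with $\phi\in\mathbb P_3(T)$ constant on $\partial T$, so $\boldsymbol p=c\,\curl b_T$. Then \eqref{dof4} is proportional to $c\sum_i\Delta b_T(\texttt v_i)$, which I would check directly is nonzero, giving $c=0$. For general $d$, I would write $\boldsymbol p=\div\boldsymbol S$ with $\boldsymbol S$ skew-valued, use the vanishing normal trace to factor barycentric coordinates out of $\boldsymbol S$, and aim to show $W\subseteq\{\div(b_T\boldsymbol K)\}$-type fields, projected to $\mathbb P_2$. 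However, this is degree-inconsistent for $d>2$. So more likely I would argue instead by a dimension count of the space of $\mathbb P_2$ fields with zero normal trace. Failing that, I would proceed with an explicit expansion $\boldsymbol p=\sum_{k<l}\lambda_k\lambda_l\boldsymbol a_{kl}+\ldots$ in the Bernstein basis, imposing normal conditions vertex and edge by edge. Then I would verify that \eqref{dof4}, the vertex average of $\skw\nabla\boldsymbol p$, is injective on what remains. This last verification is the step I expect to be hardest.
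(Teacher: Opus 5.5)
Your reduction is correct and follows the paper's route. \eqref{dof1} and \eqref{dof3} force $\boldsymbol p\cdot\boldsymbol n|_{\partial T}=0$ and $\div\boldsymbol p=0$. The face-local tangential traces of the bubbles let \eqref{dof2} solve for the $q_{ij}$, which is the content of the second lemma the paper imports. Unisolvence is then equivalent to injectivity of \eqref{dof4} on $W$, and your proof stops exactly there.

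There is one small omission. \eqref{dof4} acts on $\boldsymbol v$, not on $\boldsymbol p$, so you must check that the bubbles contribute nothing. They don't: $b_Tb_{F_i}q$ vanishes to order at least $2d-1\geq 3$ at every vertex, so each bubble has zero gradient at the vertices.

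The genuine gap is the step you flag yourself. Your $d=2$ argument is fine, but for $d\ge 3$ you never show that $\boldsymbol p\mapsto\frac1{d+1}\sum_i(\skw\nabla\boldsymbol p)(\texttt{v}_i)$ is injective on $W$. A dimension count cannot supply this. One finds $\dim W=\binom d2=\dim\mathbb K$, which only makes injectivity equivalent to surjectivity. Nor can you verify it on a reference simplex and map it over, because skew parts of gradients are not preserved by the Piola transform; it has to hold for every simplex. In the paper this step is taken from Lemma~3.4 of \cite{HuangHuangTang2024}.

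Your last fallback, the Bernstein expansion, does close the gap; here is how.

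Zero normal trace forces $\boldsymbol p(\texttt{v}_a)=\boldsymbol 0$, since the $d$ normals at a vertex are independent. It also forces $\boldsymbol p|_e\parallel\boldsymbol t_e$ on every edge, since the faces containing $e$ have tangent spaces meeting in $\mathbb R\boldsymbol t_e$. By $\mathbb P_2$ Lagrange interpolation,
$\boldsymbol p=\sum_{a<b}c_{ab}\lambda_a\lambda_b(\texttt{v}_b-\texttt{v}_a)$.

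Set $c_{ba}=-c_{ab}$. Since $\div(\lambda_a\lambda_b(\texttt{v}_b-\texttt{v}_a))=\lambda_a-\lambda_b$, one gets $\div\boldsymbol p=\sum_a\lambda_a\sum_b c_{ab}$. So $\div\boldsymbol p=0$ if and only if $\sum_b c_{ab}=0$ for every $a$.

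Because $\skw\nabla\boldsymbol p$ is affine, \eqref{dof4} equals its mean over $T$. That mean is $\frac1{d+1}\skw\boldsymbol N$ with $\boldsymbol N=\sum_{a,b}c_{ab}\,\texttt{v}_b\otimes\nabla\lambda_a$.

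Now put $\texttt{v}_0=\boldsymbol 0$ and let $C=(c_{mb})_{m,b=1}^d$, which is skew.
\begin{itemize}
\item The cycle condition gives $c_{0b}=(C\boldsymbol 1)_b$.
\item Then $\boldsymbol N\texttt{v}_m=\sum_{b\ge1}A_{mb}\texttt{v}_b$ with $A=HC$ and $H=\boldsymbol I+\boldsymbol 1\boldsymbol 1^\intercal$.
\item Let $G=(\texttt{v}_l\cdot\texttt{v}_b)_{l,b=1}^d$ be the Gram matrix. Then $\boldsymbol N$ is symmetric if and only if $AG$ is symmetric, that is, $HCG+GCH=0$.
\item Set $D=HCH$. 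The condition becomes $DM+M^\intercal D=0$ with $M=H^{-1}G$.
\item $M$ is similar to an SPD matrix, so no two of its eigenvalues sum to zero, and this Sylvester equation forces $D=0$.
\end{itemize}
Hence $C=0$, so $c=0$ and $\boldsymbol p=\boldsymbol 0$. Your tangential step then gives $q_{ij}=0$.
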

\begin{proof}
The number of the DoFs \eqref{dof} is
$$(d+1)\left(\begin{matrix}d+1\\2\end{matrix}\right)+(d-1)d(d+1)+d+\dfrac{(d-1)d}{2}=\frac{3}{2}d^2(d+1)=\dim V(T).$$
Take $\boldsymbol{v}\in V(T)$ and assume all the DoFs \eqref{dof} vanish,
then we prove $\boldsymbol{v}=\boldsymbol{0}$.
By the vanishing DoF \eqref{dof1}, 
we get $\boldsymbol{v}\cdot\boldsymbol{n}|_{\partial T}=0$. Hence $\div\boldsymbol{v}\in L_0^2(T)$. Together with the vanishing DoF \eqref{dof3} and the fact $\mathbb{P}_0(T;\mathbb{R}^d)=\nabla \mathbb{P}_1(T)$,
we obtain $\div\boldsymbol{v}=0$. Then we can write $\boldsymbol{v}=\boldsymbol{v}_1+\boldsymbol{v}_2$ with $\boldsymbol{v}_1\in\mathbb{P}_{2}(T;\mathbb{R}^{d})$ satisfying $\div\boldsymbol{v}_1=0$ and $\boldsymbol{v}_2\in\sum_{i=0}^{d}\sum_{j=1}^{d-1}\div\big(\skw(\boldsymbol{n}_{F_i}\otimes\boldsymbol{t}_j^{F_i})b_{T}b_{F_i}\mathbb{P}_{1}(F_i)\big)$.
Thanks to Lemma 3.4 and Lemma 3.5 in \cite{HuangHuangTang2024}, we get from the vanishing DoFs \eqref{dof4} and \eqref{dof2} that $\boldsymbol{v}_1 = 0$ and $\boldsymbol{v}_2 = 0$, respectively.
\end{proof}
\begin{remark}
In two and three dimensions, $V(T)$ is the same as the shape function space of the $H^1$-nonconforming element proposed in \cite{Guzman2012} for $k=2$, but the DoFs differ, particularly for \eqref{dof4}. This discrepancy arises because the bubble functions in \cite{Guzman2012} are defined implicitly by enforcing the vanishing first-order moment, whereas in this paper, the bubble functions are explicitly constructed and the first-order moment does not vanish.
\end{remark}

Define the global $H^1$-nonconforming finite element space:
\begin{align*}
V_{h}&:=\{\boldsymbol{v}_{h}\in L^{2}(\Omega;\mathbb{R}^{d}):\boldsymbol{v}_h|_T\in V(T)~\textrm{for~each}~T\in\mathcal{T}_h;
~\textrm{DoFs}~\eqref{dof1}\text{-}\eqref{dof2}\\
&\qquad\quad\;\;\textrm{are~single-valued},
\textrm{ and DoFs}~\eqref{dof1}\text{-}\eqref{dof2}~\textrm{vanish on boundary}\}.
\end{align*}
Clearly $ V_{h}\subset H_0(\div,\Omega)$, but $V_{h}\not\subseteq H_0^1(\Omega;\mathbb R^d)$.
Notice that the finite element space $ V_h$ has the weak continuity
\begin{align}
\label{weak-continuity-vh1}
\int_{F}[\![\boldsymbol{v}_h]\!]\cdot \boldsymbol{q} \ds=0 \quad \forall \ \boldsymbol{v}_h\in V_h, \ \boldsymbol{q}\in\mathbb{P}_1(F;\mathbb{R}^d), \ F\in\mathcal{F}_h.
\end{align}
Then we define the following interpolation operator $I_h: H^1_0(\Omega;\mathbb{R}^d)\rightarrow V_h$ as follows:
for any $\boldsymbol{v}\in H^1_0(\Omega;\mathbb{R}^d),$
\begin{align}
\label{interpolation-I1}&\int_{F}(I_h\boldsymbol{v})\cdot\boldsymbol{n} \ q\ds=\int_{F}\boldsymbol{v}\cdot\boldsymbol{n} \ q\ds \quad \forall \ q\in \mathbb{P}_2(F), F\in\mathring{\mathcal{F}}_h,\\
\notag
&\int_{F}\Pi_F(I_h\boldsymbol{v}) \cdot \boldsymbol{q} \ds=\int_{F}\Pi_F\boldsymbol{v} \cdot \boldsymbol{q}\ds\quad \forall\,\boldsymbol{q}\in\mathbb{P}_1(F;\mathscr T_F), F\in\mathring{\mathcal{F}}_h,\\
\label{interpolation-I3}&\int_{T}(I_h\boldsymbol{v})\cdot\boldsymbol{q} \dx=\int_{T}\boldsymbol{v}\cdot\boldsymbol{q} \dx \quad \forall \ \boldsymbol{q}\in \mathbb{P}_0(T;\mathbb{R}^d), T\in\mathcal{T}_h,\\
\notag
&\frac 1{d+1}\sum_{i=0}^{d}(\skw\nabla(I_h\boldsymbol{v}))(\texttt{v}_{i})=\frac 1{d+1}\sum_{i=0}^{d}(\skw\nabla(Q_{2,T}\boldsymbol{v}))(\texttt{v}_{i}) \quad \forall \ T\in\mathcal{T}_h.
\end{align}
\begin{lemma}
\label{interpolation-I-lemma1}
For integers $0\leq m\leq 3$ and $j=0,1$, we have for any $\boldsymbol{v}\in H^1_0(\Omega;\mathbb{R}^d)\cap H^s(\Omega;\mathbb{R}^d)$ and $T\in\mathcal{T}_h$ that
\begin{equation}
\label{Ih-error1}
|\boldsymbol{v}-I_h\boldsymbol{v}|_{m,T}\lesssim h_T^{s-m}|\boldsymbol{v}|_{s,T} \quad \textrm{ for } \max\{m,1\}\leq s\leq 3,
\end{equation}
\begin{equation}
\label{Ih-error11}
\|\partial_n^j\boldsymbol{\varepsilon}(\boldsymbol{v}-I_h\boldsymbol{v})\|_{0,\partial T}\lesssim h_T^{s-j-3/2}|\boldsymbol{v}|_{s,T} \quad \textrm{ for } j+2\leq s\leq 3.
\end{equation}
\end{lemma}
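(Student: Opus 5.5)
The approach is the standard Bramble--Hilbert argument on a single element. The key observation is that the interpolation $I_h$ is defined elementwise through DoFs that are local to $T$. On each $T$, the values of \eqref{interpolation-I1}--\eqref{interpolation-I3} and the skew-gradient condition use only $\boldsymbol v|_T$ and its traces on $\partial T$. Boundary faces carry vanishing DoFs, which agrees with $\boldsymbol v|_F=\boldsymbol 0$ there, since $\boldsymbol v\in H_0^1$.

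Hence $(I_h\boldsymbol v)|_T=I_T(\boldsymbol v|_T)$ for a local operator $I_T:H^1(T;\mathbb R^d)\to V(T)$. This operator is well defined by the unisolvence Lemma~\ref{dof-uni-solvent}. Because $\boldsymbol v$ is only in $H^1$, we must make sense of the last DoF: it is applied to $Q_{2,T}\boldsymbol v$, a polynomial, so it is legitimate.

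The first step is polynomial invariance: $I_T\boldsymbol p=\boldsymbol p$ for all $\boldsymbol p\in\mathbb P_2(T;\mathbb R^d)$.
\begin{itemize}
\item Such $\boldsymbol p$ lies in $V(T)$.
\item The face and cell DoFs of $I_T\boldsymbol p$ coincide with those of $\boldsymbol p$ by definition.
\item The skew DoF of $I_T\boldsymbol p$ equals that of $Q_{2,T}\boldsymbol p=\boldsymbol p$.
\end{itemize}
Unisolvence then gives $I_T\boldsymbol p=\boldsymbol p$.

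The second step is $H^1$-stability on the reference element. We use an affine (Piola is unnecessary since all DoFs are just tangential/normal components, which transform boundedly under shape regularity) map to a reference simplex $\hat T$, expanding $I_T\boldsymbol v$ in the dual basis $\{\boldsymbol\phi_k\}$. This yields
\[
\|I_T\boldsymbol v\|_{m,T}\lesssim \sum_k |N_k(\boldsymbol v)|\,\|\boldsymbol\phi_k\|_{m,T}.
\]
Each DoF is bounded as follows:
\begin{itemize}
\item the face moments by the trace inequality $\|\boldsymbol v\|_{0,F}^2\lesssim h_T^{-1}\|\boldsymbol v\|_{0,T}^2+h_T|\boldsymbol v|_{1,T}^2$;
\item the cell moment by $\|\boldsymbol v\|_{0,T}$;
\item the skew DoF by an inverse estimate together with $L^2$-stability of $Q_{2,T}$, giving $|\nabla Q_{2,T}\boldsymbol v|_{L^\infty}\lesssim h_T^{-d/2}|Q_{2,T}\boldsymbol v|_{1,T}\lesssim h_T^{-d/2}|\boldsymbol v|_{1,T}$.
\end{itemize}
For the last item, note that $Q_{2,T}$ is $H^1$-stable on shape-regular simplices, by an inverse estimate applied to $Q_{2,T}\boldsymbol v-Q_{0,T}\boldsymbol v$. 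After scaling this gives
\[
|I_T\boldsymbol v|_{m,T}\lesssim h_T^{-m}\|\boldsymbol v\|_{0,T}+h_T^{1-m}|\boldsymbol v|_{1,T}\quad\text{for } 0\le m\le 3,
\]
using that $V(T)$ is finite dimensional.

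The third step combines the two. Write
\[
\boldsymbol v-I_T\boldsymbol v=(\boldsymbol v-\boldsymbol p)-I_T(\boldsymbol v-\boldsymbol p),
\]
where $\boldsymbol p$ is the averaged Taylor polynomial of degree $\le 2$. Bramble--Hilbert gives $|\boldsymbol v-\boldsymbol p|_{k,T}\lesssim h_T^{s-k}|\boldsymbol v|_{s,T}$ for $k\le s$. This yields \eqref{Ih-error1} whenever $m\le s$ and $s\ge1$; for $m=3>2$, we also use $|\boldsymbol p|_{3,T}=0$. For non-integer $s$ we use the fractional Bramble--Hilbert lemma, or interpolation between integer cases.

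For \eqref{Ih-error11}, apply the trace inequality to $\boldsymbol w=\partial_n^j\boldsymbol\varepsilon(\boldsymbol v-I_h\boldsymbol v)$ on $\partial T$:
\[
\|\boldsymbol w\|_{0,\partial T}^2\lesssim h_T^{-1}\|\boldsymbol w\|_{0,T}^2+h_T|\boldsymbol w|_{1,T}^2.
\]
Then bound $\|\boldsymbol w\|_{0,T}\lesssim|\boldsymbol v-I_h\boldsymbol v|_{j+1,T}$ and $|\boldsymbol w|_{1,T}\lesssim|\boldsymbol v-I_h\boldsymbol v|_{j+2,T}$ by \eqref{Ih-error1}. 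The condition $j+2\le s$ is exactly what makes the $m=j+2$ case of \eqref{Ih-error1} admissible.

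The main obstacle is the stability bound for the skew-gradient DoF, which involves point values of a gradient. I handle it through $Q_{2,T}$ as above, and I expect this to be the only nonstandard point. One remaining subtlety is the sign and orientation of face DoFs shared across elements: single-valuedness ensures $I_h\boldsymbol v$ is well defined globally, but it does not affect the local estimates.
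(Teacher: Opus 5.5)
Your argument is correct, but it takes a more generic route than the paper. You use the standard template: invariance of the local operator $I_T$ on $\mathbb P_2(T;\mathbb R^d)$, an $H^1$-stability bound for $I_T$ that must control all four DoF families, and then Bramble--Hilbert with an averaged Taylor polynomial.

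The paper instead compares $I_h\boldsymbol v$ directly with $Q_{2,T}\boldsymbol v$. The last defining condition of $I_h$ is chosen exactly so that $\boldsymbol w=(I_h\boldsymbol v)|_T-Q_{2,T}\boldsymbol v\in V(T)$ has vanishing skew-gradient DoF. Its cell moments against constants also vanish, so $\boldsymbol w$ is fixed by the face moments of $\boldsymbol v-Q_{2,T}\boldsymbol v$ alone. Scaling and the inverse inequality then give $h_T^{m}\|\boldsymbol w\|_{m,T}\lesssim\|\boldsymbol w\|_{0,T}\lesssim h_T^{1/2}\|\boldsymbol v-Q_{2,T}\boldsymbol v\|_{0,\partial T}+\|\boldsymbol v-Q_{2,T}\boldsymbol v\|_{0,T}$. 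The trace inequality and the approximation properties of $Q_{2,T}$ then yield \eqref{Ih-error1}.

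So the step you single out as the main obstacle never arises in the paper: there is no need to bound point values of $\skw\nabla Q_{2,T}\boldsymbol v$ via an inverse estimate and $H^1$-stability of $Q_{2,T}$. Had you taken $\boldsymbol p=Q_{2,T}\boldsymbol v$ instead of an averaged Taylor polynomial in your third step, the skew and cell contributions of $I_T(\boldsymbol v-\boldsymbol p)$ would vanish and your proof would reduce to the paper's. Your version is insensitive to how the last DoF is prescribed (any $H^1$-stable, $\mathbb P_2$-preserving choice works), while the paper's is shorter because it exploits that design. Your derivation of \eqref{Ih-error11}, via the trace inequality and the cases $m=j+1,\,j+2$ of \eqref{Ih-error1}, is the same as the paper's.

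One caveat, which the paper's unqualified ``scaling argument'' shares. For $d\geq 3$, the space $V(T)$ is built from face normals and tangents, so it is invariant under similarities but not under general affine or Piola maps. Your parenthetical reason for reducing to a single reference element is therefore not the right one. Uniform constants follow instead by reducing to $h_T=1$ via a similarity. One then uses the continuous dependence of the DoF and basis data on the vertices over the compact class of shape-regular unit simplices.
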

\begin{proof}
Let $\boldsymbol{w}=(I_h\boldsymbol{v})|_T-Q_{2,T}\boldsymbol{v}$ for simplicity.
For any $\boldsymbol{v}\in H^1_0(\Omega;\mathbb{R}^d)$, it follows from the inverse inequality, trace inequality, and scaling argument that
\begin{align*}
h^{2m}_T\|\boldsymbol{w}\|^2_{m,T} \lesssim \|\boldsymbol{w}\|^2_{0,T} \lesssim h_T\sum_{F\in\mathcal{F}(T)}\|\boldsymbol{v}-Q_{2,T}\boldsymbol{v}\|^2_{0,F}+\|\boldsymbol{v}-Q_{2,T}\boldsymbol{v}\|^2_{0,T}.
\end{align*}
Then we derive the estimate \eqref{Ih-error1} from the triangle inequality and the error estimate of $Q_{2,T}$.

By the trace inequality,
\begin{equation*}
\|\partial_n^j\boldsymbol{\varepsilon}(\boldsymbol{v}-I_h\boldsymbol{v})\|_{0,\partial T}\lesssim h_T^{-1/2}|\boldsymbol{\varepsilon}(\boldsymbol{v}-I_h\boldsymbol{v})|_{j,T} + h_T^{1/2}|\boldsymbol{\varepsilon}(\boldsymbol{v}-I_h\boldsymbol{v})|_{j+1,T}.
\end{equation*}
Thus, \eqref{Ih-error11} holds from \eqref{Ih-error1}.
\end{proof}
\begin{lemma}
\label{commutative}
The following commuting property holds:
\begin{align}
\label{commutative1}
\div (I_h\boldsymbol{v})=Q_{1,h}(\div\boldsymbol{v}) \quad \forall~\boldsymbol{v}\in H^1_0(\Omega;\mathbb{R}^d).
\end{align}
\end{lemma}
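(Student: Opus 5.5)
Since $\div V(T)=\mathbb P_1(T)$, both sides of \eqref{commutative1} are elementwise in $\mathbb P_1(T)$. It therefore suffices to show that on each $T\in\mathcal T_h$,
\[
\int_T \div(I_h\boldsymbol v)\,q\dx=\int_T \div\boldsymbol v\,q\dx \qquad \forall\, q\in\mathbb P_1(T).
\]
Once this holds, $\div(I_h\boldsymbol v)|_T$ is the $L^2$ projection of $\div\boldsymbol v$ onto $\mathbb P_1(T)$, which is exactly $Q_{1,T}\div\boldsymbol v$.

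To verify the identity, fix $T$ and $q\in\mathbb P_1(T)$, and integrate by parts on $T$ for both $I_h\boldsymbol v$ and $\boldsymbol v$:
\[
\int_T \div\boldsymbol w\, q\dx=\sum_{F\in\mathcal F(T)}\int_F \boldsymbol w\cdot\boldsymbol n_{\partial T}\, q\ds-\int_T \boldsymbol w\cdot\nabla q\dx .
\]
The volume term is handled as follows. Since $\nabla q\in\mathbb P_0(T;\mathbb R^d)$, the interpolation condition \eqref{interpolation-I3} gives $\int_T I_h\boldsymbol v\cdot\nabla q\dx=\int_T\boldsymbol v\cdot\nabla q\dx$.

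The boundary terms are handled as follows. The restriction $q|_F$ lies in $\mathbb P_1(F)\subset\mathbb P_2(F)$, and $\boldsymbol n_{\partial T}=\pm\boldsymbol n_F$. For an interior face $F\in\mathring{\mathcal F}_h$, the condition \eqref{interpolation-I1} gives $\int_F (I_h\boldsymbol v)\cdot\boldsymbol n\,q\ds=\int_F\boldsymbol v\cdot\boldsymbol n\,q\ds$.

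The one point needing care is boundary faces, where \eqref{interpolation-I1} is not imposed. There the definition of $V_h$ forces the DoFs \eqref{dof1} of $I_h\boldsymbol v$ to vanish, so $\int_F (I_h\boldsymbol v)\cdot\boldsymbol n\, q\ds=0$. At the same time $\boldsymbol v\in H_0^1(\Omega;\mathbb R^d)$ has zero trace, so $\int_F \boldsymbol v\cdot\boldsymbol n\,q\ds=0$ as well. Hence the two sides also agree on $F\subset\partial\Omega$.

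Combining the volume and boundary terms gives the elementwise identity. The skew-gradient DoF and the tangential DoFs play no role in this argument. The only mild obstacle is checking that the boundary convention for $I_h$ is consistent with the zero trace of $\boldsymbol v$, which is what the $H^1_0$ hypothesis provides.
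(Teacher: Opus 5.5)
Your proof is correct and is essentially the paper's argument: elementwise integration by parts, with the face moments \eqref{interpolation-I1} handling the boundary terms and \eqref{interpolation-I3} handling the volume term against $\nabla q\in\mathbb P_0(T;\mathbb R^d)$. Your explicit treatment of boundary faces spells out a detail the paper leaves implicit: there the DoFs \eqref{dof1} of $V_h$ vanish, and $\boldsymbol v$ has zero trace.
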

\begin{proof}
Using the integration by parts, \eqref{interpolation-I1} and \eqref{interpolation-I3}, we get for any $T\in \mathcal{T}_h$ that
\begin{equation*}
(\div(I_h\boldsymbol{v})-Q_{1,h}(\div\boldsymbol{v}),q)_T = (\div(I_h\boldsymbol{v}-\boldsymbol{v}),q)_T = 0 \quad \forall~q\in\mathbb{P}_1(T).
\end{equation*}
Hence \eqref{commutative1} holds from the last equality.
\end{proof}

\begin{corollary}
The following surjectivity property holds:
\begin{equation}\label{eq:div-stability}
\div V_h=\mathring{\mathbb{P}}_{1}(\mathcal{T}_h).
\end{equation}
\end{corollary}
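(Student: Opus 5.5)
The plan is to prove the two inclusions separately; the inclusion $\supseteq$ reduces, via Lemma~\ref{commutative}, to the continuous surjectivity of $\div$ onto $L_0^2(\Omega)$.

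\emph{The inclusion $\div V_h\subseteq\mathring{\mathbb P}_1(\mathcal T_h)$.} Take $\boldsymbol v_h\in V_h$. On each $T$ we have $\div(\boldsymbol v_h|_T)\in\div V(T)=\mathbb P_1(T)$. Moreover $V_h\subset H_0(\div,\Omega)$, because the normal moments \eqref{dof1} against $\mathbb P_2(F)$ are single-valued and $(V(T)\cdot\boldsymbol n)|_F\in\mathbb P_2(F)$; hence the normal component is continuous across interior faces and vanishes on $\partial\Omega$. Therefore $\div\boldsymbol v_h$ is the global divergence, and the divergence theorem gives
\[
\int_\Omega\div\boldsymbol v_h\dx=\int_{\partial\Omega}\boldsymbol v_h\cdot\boldsymbol n\ds=0 .
\]
So $\div\boldsymbol v_h\in\mathring{\mathbb P}_1(\mathcal T_h)$.

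\emph{The inclusion $\mathring{\mathbb P}_1(\mathcal T_h)\subseteq\div V_h$.} Take $q_h\in\mathring{\mathbb P}_1(\mathcal T_h)\subset L_0^2(\Omega)$. By the classical surjectivity of $\div:H_0^1(\Omega;\mathbb R^d)\to L_0^2(\Omega)$ on a bounded Lipschitz polytope (Bogovski\u{\i}, or \cite{MR2609313}), there is $\boldsymbol v\in H_0^1(\Omega;\mathbb R^d)$ with $\div\boldsymbol v=q_h$. Set $\boldsymbol v_h=I_h\boldsymbol v\in V_h$. The commuting property \eqref{commutative1} gives
\[
\div\boldsymbol v_h=Q_{1,h}\div\boldsymbol v=Q_{1,h}q_h=q_h,
\]
since $q_h$ is already piecewise linear.

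\emph{Main obstacle.} The only point needing care is that Lemma~\ref{commutative} must apply to a general $\boldsymbol v\in H_0^1$, not a smooth one. This holds because $I_h$ is defined through face and element moments, and these make sense for $H^1$ functions via traces. The vertex term uses $Q_{2,T}\boldsymbol v$, which is a polynomial and hence well defined.
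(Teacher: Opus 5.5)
Your proposal is correct and matches the paper's proof: the inclusion $\div V_h\subseteq\mathring{\mathbb P}_1(\mathcal T_h)$ follows from $\div V(T)=\mathbb P_1(T)$ and $V_h\subset H_0(\div,\Omega)$, and the reverse inclusion follows by lifting $q_h$ to some $\boldsymbol v\in H_0^1(\Omega;\mathbb R^d)$ with $\div\boldsymbol v=q_h$ and applying \eqref{commutative1} to $I_h\boldsymbol v$. Your extra remarks, on why normal continuity holds and why $I_h$ is well defined on all of $H_0^1$, make explicit what the paper treats as clear.
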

\begin{proof}
Clearly, $\div V_h\subseteq \mathring{\mathbb{P}}_{1}(\mathcal{T}_h)$.
For $q_h\in \mathring{\mathbb{P}}_{1}(\mathcal{T}_h)\subset L_0^2(\Omega)$, there exists a $\boldsymbol{v}\in H^1_0(\Omega;\mathbb{R}^d)$ such that $\div\boldsymbol{v}=q_h$. Then $I_h\boldsymbol{v}\in V_h$, and by \eqref{commutative1} we have $\div(I_h\boldsymbol{v})=Q_{1,h}(\div\boldsymbol{v})=Q_{1,h}q_h=q_h$.
\end{proof}

\begin{lemma}
\label{interpolation-Jh-lemma}
For integers $0\leq m\leq 2$ and $j=0,1$, we have for any $\boldsymbol{v}\in H^1_0(\Omega;\mathbb{R}^d)$ satisfying $\div\boldsymbol{v}\in H^{s}(\Omega)$ that
\begin{equation}
\label{Jh-error1}
|\div\boldsymbol{v}-\div(I_h\boldsymbol{v})|_{m,T}\lesssim h_T^{s-m}|\div\boldsymbol{v}|_{s,T}
\quad \textrm{ for } m\leq s\leq 2, 
\end{equation}
\begin{equation}
\label{Jh-error11}
\|\partial_n^j\div(\boldsymbol{v}-I_h\boldsymbol{v})\|_{0,\partial T}\lesssim h_T^{s-j-1/2}|\div\boldsymbol{v}|_{s,T}
\quad \textrm{ for } j+1\leq s\leq 2. 
\end{equation}
\end{lemma}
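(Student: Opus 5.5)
The proof reduces everything to approximation properties of the elementwise $L^2$ projection $Q_{1,T}$ onto $\mathbb P_1(T)$. By Lemma~\ref{commutative}, $\div(I_h\boldsymbol v)|_T=Q_{1,T}(\div\boldsymbol v)$ on each $T\in\mathcal T_h$. Setting $p:=\div\boldsymbol v\in H^s(T)$, we have
\[
\div\boldsymbol v-\div(I_h\boldsymbol v)=p-Q_{1,T}p \quad\text{on } T .
\]
Both \eqref{Jh-error1} and \eqref{Jh-error11} are therefore statements about the local projection error $p-Q_{1,T}p$. In particular, the regularity of $\boldsymbol v$ itself beyond $H^1_0$ plays no role.

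For \eqref{Jh-error1}, take $0\le m\le s\le 2$. Since $Q_{1,T}$ reproduces $\mathbb P_1(T)$, we can use a standard Bramble--Hilbert/scaling argument on the reference simplex. This requires the $H^m$-stability of $Q_{1,T}$ for $m\le 2$. That stability follows from the inverse inequality on $\mathbb P_1(T)$ combined with the $L^2$-stability of $Q_{1,T}$, after first subtracting an averaged Taylor polynomial of degree $\lfloor s\rfloor-1$ or less. The result is $|p-Q_{1,T}p|_{m,T}\lesssim h_T^{s-m}|p|_{s,T}$.

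For $m\ge 2$ one has $|Q_{1,T}p|_{2,T}=0$, so the case $m=s=2$ is trivial. Non-integer $s$ is handled by interpolation between the integer cases, or directly through the fractional Bramble--Hilbert lemma. The only point needing care is that the seminorm $|p|_{s,T}$ rather than the full norm appears on the right. This is legitimate because $Q_{1,T}$ preserves polynomials of degree $\lceil s\rceil-1\le 1$.

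For \eqref{Jh-error11}, with $j=0,1$ and $j+1\le s\le 2$, I will apply the scaled trace inequality to $\partial^j(p-Q_{1,T}p)$:
\[
\|\partial_n^j(p-Q_{1,T}p)\|_{0,\partial T}\lesssim h_T^{-1/2}|p-Q_{1,T}p|_{j,T}+h_T^{1/2}|p-Q_{1,T}p|_{j+1,T}.
\]
Inserting \eqref{Jh-error1} with $m=j$ and $m=j+1$ gives the bound $h_T^{s-j-1/2}|p|_{s,T}$. The condition $s\ge j+1$ is exactly what makes the case $m=j+1\le s$ of \eqref{Jh-error1} admissible.

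The main, though mild, obstacle is the fractional-order case $j+1<s<j+2$ in the trace step. There the trace inequality with an $H^1$ term must be applied to a function that is only in $H^{s-j}$ with $s-j\in(1,2)$. This is still fine, since $s-j\ge1$ means the $H^{j+1}$ seminorm is finite. In summary, the plan is: commuting property, then local $L^2$ projection estimates, then the trace inequality, mirroring the proof of Lemma~\ref{interpolation-I-lemma1}.
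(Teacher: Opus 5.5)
Your proposal is correct and is essentially the paper's own argument: the commuting property \eqref{commutative1} turns $\div\boldsymbol v-\div(I_h\boldsymbol v)$ into the local projection error $p-Q_{1,T}p$ with $p=\div\boldsymbol v$, so \eqref{Jh-error1} is just the standard approximation estimate for $Q_{1,T}$. The boundary bound \eqref{Jh-error11} then follows, exactly as in the paper, from the scaled trace inequality combined with \eqref{Jh-error1} for $m=j$ and $m=j+1$.
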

\begin{proof}
From the commuting property \eqref{commutative1} and the error estimate of $Q_{1,h}$, we get~\eqref{Jh-error1}. Applying the trace inequality, \eqref{Jh-error11} holds from \eqref{Jh-error1}.
\end{proof}
\begin{lemma}
\label{interpolation-I-lemma2}
Let $\boldsymbol{u}\in H_0^2(\Omega;\mathbb{R}^d)\cap H^3(\Omega;\mathbb{R}^d)$ be the solution of problem \eqref{SGE0}, and the solution $\boldsymbol{u}_0\in H^1_0(\Omega;\mathbb{R}^d)$ of problem \eqref{SGElinear} satisfy the regularity \eqref{elasregularity} with $2\leq s\leq 3$.
We have
\begin{align}
\label{Ih-error2}
\interleave\boldsymbol{u}-I_h\boldsymbol{u}\interleave_{\iota,\lambda,h} & \lesssim \iota h(|\boldsymbol{u}|_3+\sqrt{\lambda}|\div\boldsymbol{u}|_2)+ h(|\boldsymbol{u}|_2+\sqrt{\lambda}|\div\boldsymbol{u}|_1), \\ 
\label{Ih-error3}
\interleave\boldsymbol{u}-I_h\boldsymbol{u}\interleave_{\iota,\lambda,h} & \lesssim\iota^{1/2}\|\boldsymbol{f}\|_0+h^{s-1}\|\boldsymbol{f}\|_{s-2}.
\end{align}
\end{lemma}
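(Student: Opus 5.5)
The plan is to bound the discrete norm term by term with the local interpolation estimates of Lemmas~\ref{interpolation-I-lemma1} and~\ref{interpolation-Jh-lemma}, and then to insert the regularity bounds. Write $\boldsymbol{e}=\boldsymbol{u}-I_h\boldsymbol{u}$. The norm $\interleave\boldsymbol{e}\interleave^2_{\iota,\lambda,h}$ consists of four groups of terms:
\begin{itemize}
\item the volume terms $2\mu\|\boldsymbol{\varepsilon}_h(\boldsymbol{e})\|_0^2+\lambda\|\div\boldsymbol{e}\|_0^2$;
\item the broken seminorms $\iota^2(2\mu|\boldsymbol{\varepsilon}_h(\boldsymbol{e})|_{1,h}^2+\lambda|\div\boldsymbol{e}|_{1,h}^2)$;
\item the jump terms $\iota^2\sum_F h_F^{-1}\big(2\mu\|[\![\boldsymbol{\varepsilon}_h(\boldsymbol{e})]\!]\|_{0,F}^2+\lambda\|[\![\div\boldsymbol{e}]\!]\|_{0,F}^2\big)$.
\end{itemize}

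\textbf{Proof of \eqref{Ih-error2}.} We argue element by element.
\begin{itemize}
\item For the volume terms, apply \eqref{Ih-error1} with $m=1$, $s=2$, giving $\|\boldsymbol{\varepsilon}_h(\boldsymbol{e})\|_0\lesssim h|\boldsymbol{u}|_2$. Apply \eqref{Jh-error1} with $m=0$, $s=1$, giving $\|\div\boldsymbol{e}\|_0\lesssim h|\div\boldsymbol{u}|_1$.
\item For the broken seminorms, apply \eqref{Ih-error1} with $m=2$, $s=3$ and \eqref{Jh-error1} with $m=1$, $s=2$. This gives $\iota h(|\boldsymbol{u}|_3+\sqrt\lambda|\div\boldsymbol{u}|_2)$.
\item For the jump terms, use $\|[\![\cdot]\!]\|_{0,F}\le\sum_{T\supset F}\|\cdot|_T\|_{0,F}$; on boundary faces the jump is just the trace. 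Then apply \eqref{Ih-error11} with $j=0$, $s=3$, which gives $h_F^{-1/2}\|\boldsymbol{\varepsilon}(\boldsymbol{e})\|_{0,\partial T}\lesssim h_T|\boldsymbol{u}|_{3,T}$. Similarly \eqref{Jh-error11} with $j=0$, $s=2$ gives $h_F^{-1/2}\|\div\boldsymbol{e}\|_{0,\partial T}\lesssim h_T|\div\boldsymbol{u}|_{2,T}$.
\end{itemize}
Summing over elements and using shape regularity ($h_F\eqsim h_T$) yields \eqref{Ih-error2}.

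\textbf{Proof of \eqref{Ih-error3}.} A direct use of \eqref{Ih-error2} with the regularity bounds is not enough: $h|\boldsymbol{u}|_2$ only gives $h\iota^{-1/2}$-type bounds. We therefore split $\boldsymbol{u}=(\boldsymbol{u}-\boldsymbol{u}_0)+\boldsymbol{u}_0$ in the volume terms only.
\begin{itemize}
\item \emph{$\iota$-weighted parts.} Use the stability form of the local estimates rather than the $h$-gaining form, i.e.\ take $s=m$ in \eqref{Ih-error1} and \eqref{Jh-error1}, and $s=j+2$ (resp.\ $s=j+1$) in the trace estimates. The seminorm terms are then bounded by $\iota(|\boldsymbol{u}|_2+\sqrt\lambda|\div\boldsymbol{u}|_1)$. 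The jump terms are bounded by $\iota h^{-1/2}\cdot h^{1/2}(|\boldsymbol{u}|_2+\sqrt\lambda|\div\boldsymbol{u}|_1)$, the same quantity. By \eqref{Regularity-u} and \eqref{Regularity-divu}, together with $\sqrt\lambda\le\lambda/\sqrt{\lambda_0}$, this is $\lesssim\iota^{1/2}\|\boldsymbol{f}\|_0$.
\item \emph{Volume terms.} Since $I_h$ is linear, $\boldsymbol{e}=(\boldsymbol{u}-\boldsymbol{u}_0)-I_h(\boldsymbol{u}-\boldsymbol{u}_0)+(\boldsymbol{u}_0-I_h\boldsymbol{u}_0)$; note $\boldsymbol{u}_0\in H_0^1$, so $I_h\boldsymbol{u}_0$ is defined. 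For the first piece use $H^1$-stability: \eqref{Ih-error1} with $m=s=1$ and \eqref{Jh-error1} with $m=s=0$. This gives $|\boldsymbol{u}-\boldsymbol{u}_0|_1+\sqrt\lambda\|\div(\boldsymbol{u}-\boldsymbol{u}_0)\|_0\lesssim\iota^{1/2}\|\boldsymbol{f}\|_0$ by \eqref{Regularity-u}--\eqref{Regularity-divu}. For the second piece use \eqref{Ih-error1} with $m=1$ and \eqref{Jh-error1} with $m=0$ and regularity index $s$ (resp.\ $s-1$). This gives $h^{s-1}(|\boldsymbol{u}_0|_s+\sqrt\lambda|\div\boldsymbol{u}_0|_{s-1})\lesssim h^{s-1}\|\boldsymbol{f}\|_{s-2}$ by \eqref{elasregularity}.
\end{itemize}

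The main obstacle is exactly this bookkeeping: the $\iota$-weighted and jump terms must not touch $\boldsymbol{u}_0$, since $\boldsymbol{u}_0$ lacks the normal-derivative boundary condition and is not uniformly in $H^3$. The point is that these terms already carry the factor $\iota$, so $O(1)$-stability estimates suffice for them. The $\lambda$-robustness relies entirely on the commuting property \eqref{commutative1}: it makes the divergence errors depend only on $\div\boldsymbol{u}$, which is controlled with the $\lambda$ weight in \eqref{Regularity-divu} and \eqref{elasregularity}.
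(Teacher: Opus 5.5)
Your proof is correct and follows the paper's argument: \eqref{Ih-error2} comes directly from the local estimates of Lemmas~\ref{interpolation-I-lemma1} and~\ref{interpolation-Jh-lemma}. For \eqref{Ih-error3} the paper likewise splits $\boldsymbol{u}=(\boldsymbol{u}-\boldsymbol{u}_0)+\boldsymbol{u}_0$ only in the zeroth-order terms and bounds the $\iota$-weighted terms by stability as $\iota(|\boldsymbol{u}|_2+\lambda|\div\boldsymbol{u}|_1)$, merely phrasing everything through $\|\boldsymbol{\sigma}_h(\cdot)\|_0$ and $\interleave\boldsymbol{\sigma}_h(\cdot)\interleave_{1,h}$ with weight $\lambda$ rather than $\sqrt{\lambda}$, which is your conversion $\sqrt{\lambda}\le\lambda/\sqrt{\lambda_0}$.
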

\begin{proof}
Using \eqref{Ih-error1}--\eqref{Ih-error11} and \eqref{Jh-error1}--\eqref{Jh-error11}, we acquire the estimate \eqref{Ih-error2},
\begin{align}\begin{split}
\label{interpolation-I-lemma2-pf}
\|\boldsymbol{\sigma}_h(\boldsymbol{u}-I_h\boldsymbol{u})\|_{0}&\leq \|\boldsymbol{\sigma}_h((\boldsymbol{u}-\boldsymbol{u}_0)-I_h(\boldsymbol{u}-\boldsymbol{u}_0))\|_{0}+\|\boldsymbol{\sigma}_h(\boldsymbol{u}_0-I_h\boldsymbol{u}_0)\|_{0} \\
&\lesssim \|\boldsymbol{\sigma}(\boldsymbol{u}-\boldsymbol{u}_0)\|_{0}+h^{s-1}|\boldsymbol{u}_0|_s+\lambda h^{s-1}|\div\boldsymbol{u}_0|_{s-1},
\end{split}\end{align}
and
\begin{align*}
\interleave\boldsymbol{\sigma}_h(\boldsymbol{u}-I_h\boldsymbol{u})\interleave_{1,h}
&\lesssim \interleave\boldsymbol{\varepsilon}_h(\boldsymbol{u}-I_h\boldsymbol{u})\interleave_{1,h} +
\lambda\interleave\div(\boldsymbol{u}-I_h\boldsymbol{u})\interleave_{1,h} \\
&\lesssim|\boldsymbol{u}|_2+\lambda|\div\boldsymbol{u}|_1.
\end{align*}
Then \eqref{Ih-error3} holds from the last two inequalities and \eqref{elasregularity}--\eqref{Regularity-divu}.
\end{proof}
\subsection{Nonconforming finite element Stokes complex in two dimensions}
First, we present a nonconforming discretization of the Stokes complex in two dimensions
\begin{equation}
\label{Stokescomplex2d}
0\xrightarrow{\subset} H_0^2(\Omega)\xrightarrow{\curl}  H_0^1(\Omega;\mathbb{R}^2)
\xrightarrow{\div} L_0^2(\Omega) \xrightarrow {}0.
\end{equation}
By the div surjectivity \eqref{eq:div-stability}, we only focus on the construction of the discrete space of $H_0^2(\Omega)$ in complex \eqref{Stokescomplex2d}.

Take the space of shape functions
\begin{align*}
W(T):=\mathbb{P}_{3}(T)
\oplus\Oplus_{i=0}^{2}(b_{T}b_{F_i}\mathbb{P}_{1}(F_i)).
\end{align*}
Then $\dim W(T) = 16$.
The DoFs for $W(T)$ are given by
\begin{subequations}\label{2d-wdof}
\begin{align}
\label{2d-wdof1}
v(\texttt{v}), & \quad \texttt{v}\in\mathcal{V}(T),\\
\label{2d-wdof2}
\int_{F}v \, q \ds, \quad \int_{F}\partial_nv \, q \ds, & \quad q\in\mathbb{P}_1(F), F\in\mathcal{F}(T),\\
\label{2d-wdof3}
\frac 13\sum_{i=0}^{2}(\Delta v)(\texttt{v}_{i}).&
\end{align}
\end{subequations}
\begin{lemma}
\label{2d-w-dof-uni-solvent}
The DoFs \eqref{2d-wdof} are unisolvent for $W(T)$.
\end{lemma}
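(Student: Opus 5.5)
Count first: $\dim\mathbb P_3(T)=10$ and the three face bubble families contribute $3\cdot 2=6$, so $\dim W(T)=16$. The DoFs number $3$ (vertex values) $+\,3\cdot 2\cdot 2=12$ (face moments of $v$ and $\partial_n v$) $+\,1$ (averaged Laplacian), also $16$. It therefore suffices to take $v\in W(T)$ with all DoFs \eqref{2d-wdof} vanishing and show $v=0$. I would transfer the problem to the vector element through $\curl$ and then reuse Lemma~\ref{dof-uni-solvent}.

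Set $\boldsymbol w=\curl v$. By the remark in Section~\ref{NCFEM}, $\curl W(T)\subset V(T)$: indeed $\curl\mathbb P_3(T)\subset\mathbb P_2(T;\mathbb R^2)$, and $\curl(b_Tb_{F_i}\mathbb P_1(F_i))$ is exactly the bubble part of $V(T)$. Also $\div\boldsymbol w=0$. I then check that all DoFs \eqref{dof} of $\boldsymbol w$ vanish.
\begin{enumerate}
\item On a face $F$, $\boldsymbol w\cdot\boldsymbol n=\pm\partial_t v$. Since $v|_F\in\mathbb P_3(F)$ (the bubbles vanish on $\partial T$), the vanishing vertex values, after integrating by parts along $F$, give $\int_F\partial_t v\,q\ds=-\int_F v\,\partial_t q\ds=0$ for $q\in\mathbb P_2(F)$, because $\partial_t q\in\mathbb P_1(F)$ and the moments of $v$ against $\mathbb P_1(F)$ vanish. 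This yields \eqref{dof1}.
\item On a face, $\boldsymbol w\cdot\boldsymbol t=\pm\partial_n v$, so \eqref{dof2} follows directly from the normal-derivative moments.
\item $\int_T\curl v\dx=\int_{\partial T}v\,\boldsymbol n^\perp\ds$ involves only constant moments of $v$ on faces, which vanish; this gives \eqref{dof3}.
\item $\skw\nabla\curl v$ has off-diagonal entry $\pm\frac12\rot\curl v=\mp\frac12\Delta v$, so \eqref{dof4} is precisely \eqref{2d-wdof3}.
\end{enumerate}
Lemma~\ref{dof-uni-solvent} then gives $\curl v=0$, so $v$ is constant, and the vanishing vertex values force $v=0$.

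The only delicate point is step 1, namely that $v|_F$ is cubic so that $\partial_t v\in\mathbb P_2(F)$, and that the vertex values enter through the boundary terms of the tangential integration by parts. The bubble terms $b_Tb_{F_i}p$ vanish on all of $\partial T$ because $b_T$ does, so the bubbles do not affect this computation. I should also confirm the sign conventions in $\curl$ and in $\skw$, but signs are irrelevant when all the quantities vanish.
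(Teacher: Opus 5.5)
Your proof is correct and takes the same route as the paper. Both pass to $\curl v\in V(T)$, match \eqref{2d-wdof2}--\eqref{2d-wdof3} with \eqref{dof2} and \eqref{dof4} via $(\curl v)\cdot\boldsymbol t=-\partial_n v$ and $\rot\curl v=-\Delta v$, and conclude with Lemma~\ref{dof-uni-solvent}. The only difference is bookkeeping: the paper first deduces $v|_{\partial T}=0$ from the cubic trace, so that $\curl v\in H_0(\div,T)$ and \eqref{dof1}, \eqref{dof3} vanish at once, whereas you verify \eqref{dof1} and \eqref{dof3} directly by tangential integration by parts and the boundary formula for $\int_T\curl v\dx$.
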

\begin{proof}
The number of the DoFs \eqref{2d-wdof} is
$3+6+6+1=16=\dim W(T).$

Take $ v\in W(T)$ and assume all the DoFs \eqref{2d-wdof} vanish,
then we prove $v=0$. Notice that $v|_F\in\mathbb{P}_{3}(F)$ for each $F\in\mathcal{F}(T)$, and hence we get from the vanishing DoFs \eqref{2d-wdof1}--\eqref{2d-wdof2} that $v|_{\partial T} = 0$. 
Then $\curl v\in H_0(\div,T)$. By comparing DoFs~\eqref{2d-wdof} with DoFs \eqref{dof}, we conclude $\curl v=0$ from Lemma~\ref{dof-uni-solvent}, the vanishing DoFs~\eqref{2d-wdof2}--\eqref{2d-wdof3}, $(\curl v)\cdot\boldsymbol{t}=-\partial_nv$, and $\rot(\curl v)=-\Delta v$.
Thus, $v=0$.
\end{proof}
Define the global $H^2$-nonconforming finite element space:
\begin{align*}
W_{h}&=\{v_{h}\in L^{2}(\Omega):v_h|_T\in W(T)~\textrm{for}~T\in\mathcal{T}_h; \textrm{all the DoFs \eqref{2d-wdof} are} \\
&\qquad\quad\textrm{single-valued, and DoFs \eqref{2d-wdof1}--\eqref{2d-wdof2} vanish on boundary}\}.
\end{align*}
We have $W_h\subset H_0^1(\Omega)$ and $W_h\not\subseteq H_0^2(\Omega)$.
\begin{lemma}
The nonconforming discrete complex
\begin{equation}
\label{2dcomplex-2}
0\xrightarrow{\subset} W_h\xrightarrow{\curl}  V_h
\xrightarrow{\div} \mathring{\mathbb{P}}_{1}(\mathcal{T}_h) \xrightarrow {}0
\end{equation}
is exact.
\end{lemma}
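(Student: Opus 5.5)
We check four things in turn: $\curl W_h\subseteq V_h$, injectivity of $\curl$ on $W_h$, exactness at $V_h$, and surjectivity of $\div$. Surjectivity of $\div$ is exactly \eqref{eq:div-stability}. Injectivity is immediate, because $W_h\subset H_0^1(\Omega)$ and $\curl v_h=0$ forces $v_h$ to be constant, hence zero. Also $\div\curl=0$ elementwise, so the composition vanishes.

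\textbf{Step 1: $\curl W_h\subseteq V_h$.} Locally, $\curl\mathbb P_3(T)\subset\mathbb P_2(T;\mathbb R^2)$. By the preceding remark, $\curl(b_Tb_{F_i}\mathbb P_1(F_i))$ is exactly the bubble part of $V(T)$, so $\curl W(T)\subseteq V(T)$. For the interelement DoFs \eqref{dof1}--\eqref{dof2}, use $(\curl v)\cdot\boldsymbol n=\partial_t v$ and $(\curl v)\cdot\boldsymbol t=-\partial_n v$ on each edge.
\begin{itemize}
\item The tangential moments against $\mathbb P_1(F)$ are, up to sign, the single-valued DoFs $\int_F\partial_n v\,q$.
\item For the normal moments against $q\in\mathbb P_2(F)$, integrate by parts along $F$: $\int_F\partial_t v\,q=[vq]_{\partial F}-\int_F v\,\partial_t q$ with $\partial_tq\in\mathbb P_1(F)$. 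This is determined by the vertex values and the edge moments $\int_F vq$, which are single-valued.
\end{itemize}
On boundary edges all of these vanish, so $\curl W_h\subseteq V_h$.

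\textbf{Step 2: exactness at $V_h$ by dimension count.} We show $\dim W_h=\dim\{\boldsymbol v_h\in V_h:\div\boldsymbol v_h=0\}=\dim V_h-\dim\mathring{\mathbb P}_1(\mathcal T_h)$. Since $\curl$ is injective, this gives $\curl W_h=\ker(\div)\cap V_h$.

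Let $V,E,T$ denote the numbers of interior vertices, interior edges and triangles. Counting the global DoFs:
\begin{itemize}
\item $\dim W_h=V+4E+T$ (vertex values, two $\mathbb P_1$ moments of $v$ and of $\partial_n v$ per interior edge, and one DoF \eqref{2d-wdof3} per triangle);
\item $\dim V_h=3E+2E+2T+T=5E+3T$ (DoFs \eqref{dof1}, \eqref{dof2}, \eqref{dof3}, \eqref{dof4});
\item $\dim\mathring{\mathbb P}_1(\mathcal T_h)=3T-1$.
\end{itemize}
The required identity therefore reads $V+4E+T=5E+3T-3T+1$, i.e. $V-E+T=1$. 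This is Euler's formula for interior counts on a contractible polygonal domain.

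\textbf{Main obstacle.} The delicate point is verifying that these are the correct global dimensions. For $W_h$, this means the global DoFs \eqref{2d-wdof} on interior entities are independent, so the count is valid; this follows from local unisolvence (Lemma~\ref{2d-w-dof-uni-solvent}) together with each DoF being associated with a single interior entity or element. As a fallback, we can show exactness directly instead of by counting. Given divergence-free $\boldsymbol v_h\in V_h$, it is $H(\div)$-conforming, so on the contractible domain there is a stream function $w\in H_0^1(\Omega)$ with $\curl w=\boldsymbol v_h$, and $w|_T\in W(T)$ elementwise. Then check that the DoFs of $w$ are single-valued: continuity of $w$ gives the vertex values and $\int_Fwq$, and the tangential moments of $\boldsymbol v_h$ give $\int_F\partial_nw\,q$.
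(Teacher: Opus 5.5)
Your proof is correct, but your main route differs from the paper's. The paper proves exactness at $V_h$ directly, and this is essentially your fallback. It writes a divergence-free $\boldsymbol v_h\in V_h\subset H_0(\div,\Omega)$ as $\curl w_h$ with $w_h\in H_0^1(\Omega)$, using contractibility through the continuous stream-function result. It then observes $w_h|_T\in W(T)$ and gets $w_h\in W_h$ from the single-valuedness of DoF \eqref{dof2}.

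Your primary argument is instead the dimension count that the paper reserves for the three-dimensional complex. It rests on three facts:
\begin{itemize}
\item $\curl W_h\subseteq V_h$, which you verify via $(\curl v)\cdot\boldsymbol n=\partial_t v$, $(\curl v)\cdot\boldsymbol t=-\partial_n v$ and integration by parts along edges. The paper absorbs this step into ``clearly a complex''.
\item Injectivity of $\curl$ on $W_h\subset H_0^1(\Omega)$.
\item The surjectivity \eqref{eq:div-stability}.
\end{itemize}
With these, exactness reduces to $|\mathring{\mathcal V}_h|-|\mathring{\mathcal E}_h|+|\mathcal T_h|=1$. Your counts $\dim W_h=|\mathring{\mathcal V}_h|+4|\mathring{\mathcal E}_h|+|\mathcal T_h|$, $\dim V_h=5|\mathring{\mathcal E}_h|+3|\mathcal T_h|$ and $\dim\mathring{\mathbb P}_1(\mathcal T_h)=3|\mathcal T_h|-1$ are right. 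The ``main obstacle'' you flag is not really one: both spaces are defined by single-valued DoFs, so local unisolvence makes the global dimension equal the number of global DoFs.

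The counting route is purely combinatorial and shows exactly where contractibility enters, through the Euler characteristic. It also avoids both the continuous stream-function theorem and the elementwise identification $w_h|_T\in W(T)$. That identification is asserted without detail in the paper and in your fallback. It holds because $\curl w_h|_T\in V(T)\cap\ker\div$ splits as the curl of a $\mathbb P_3(T)$ function plus the curl of a face bubble. The paper's direct route is shorter, needs no global bookkeeping, and produces an explicit preimage.
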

\begin{proof}
Clearly, sequence \eqref{2dcomplex-2} is a complex.

For $\boldsymbol{v}_h\in V_h\cap\ker(\div)\subset H_0(\div,\Omega)\cap\ker(\div)$,
it follows that $\boldsymbol{v}_h=\curl w_h$ with $w_h\in H_0^1(\Omega)$ satisfying $w_h|_T\in W(T)$ for $T\in\mathcal T_h$. Then $w_h\in W_h$ holds from the single-valued DoF \eqref{dof2}. Hence, we have $V_h\cap\ker(\div)=\curl W_h$.
This together with \eqref{eq:div-stability} ends the proof.
\end{proof}

Define interpolation operator $I_h^W: H_0^2(\Omega)\rightarrow W_h$ as follows:
\begin{align*}
(I_h^Wv)(\texttt{v})&=v(\texttt{v}) \qquad\qquad\qquad\qquad\qquad\,\forall~\texttt{v}\in \mathring{\mathcal{V}}_h, \\
\int_{F}(I_h^Wv)\,q\ds&=\int_{F}v\,q\ds \qquad\qquad\qquad\qquad\, \forall~q\in\mathbb{P}_1(F), F\in\mathring{\mathcal{F}}_h,\\
\int_{F}\curl(I_h^Wv) \cdot \boldsymbol{t}\,q\ds&=\int_{F}I_h(\curl v) \cdot \boldsymbol{t}\,q\ds \qquad\quad\;\, \forall~q\in\mathbb{P}_1(F), F\in\mathring{\mathcal{F}}_h,\\
\sum_{i=0}^{2}(\Delta(I_h^Wv))(\texttt{v}_{i})&=-\sum_{i=0}^{2}(\rot(I_h(\curl v)))(\texttt{v}_{i}) \quad \forall~T\in\mathcal{T}_h.
\end{align*}
It can be verified that 
\begin{equation}\label{commutative2dcurl}
\curl(I_h^Wv)=I_h(\curl v)\quad\forall~ v\in H_0^2(\Omega).
\end{equation}

The combination of complex \eqref{2dcomplex-2}, the commuting properties \eqref{commutative2dcurl} and \eqref{commutative1}, yields the following commutative diagram
$$
\xymatrix{
0 \ar[r]^{\subset\hspace{1em}} &
H_0^2(\Omega) \ar[r]^{\curl\hspace{1em}} \ar[d]^{I^W_h} &
H_0^1(\Omega;\mathbb{R}^2) \ar[r]^{\hspace{0.5em}\div} \ar[d]^{I_h} &
L_0^2(\Omega) \ar[r] \ar[d]^{Q_{1,h}} & 0\\
0 \ar[r]^{\subset} & W_h \ar[r]^{\curl} &
V_h \ar[r]^{\div\hspace{1em}} & \mathring{\mathbb{P}}_{1}(\mathcal{T}_h) \ar[r] & 0. 
}
$$

\subsection{Nonconforming finite element Stokes complex in three dimensions}
Recall the Stokes complex in three dimensions
\begin{equation}
\label{Stokescomplex3d}
0\xrightarrow{\subset} H_0^1(\Omega)\xrightarrow{\nabla}  H_0(\grad\curl,\Omega)\xrightarrow{\curl} H_0^1(\Omega;\mathbb{R}^3)
\xrightarrow{\div} L^2_0(\Omega) \xrightarrow {}0,
\end{equation}
with
\begin{equation*}
H_0(\grad\curl,\Omega):=\{\boldsymbol{v}\in H_0(\curl,\Omega): \curl\boldsymbol{v}\in H_0^1(\Omega;\mathbb{R}^3)\}.
\end{equation*}

To discretize space $H_0(\grad\curl,\Omega)$, take the space of shape functions
\begin{align*}
W(T):=\mathbb{P}_{3}^-(T;\mathbb{R}^{3})
&\oplus\Oplus_{i=0}^{3}(b_{T}b_{F_i}\mathbb{P}_{1}(F_i)\otimes\mathscr T^{F_i}),
\end{align*}
where $\mathbb{P}_{3}^-(T;\mathbb{R}^{3}):=\nabla\mathbb{P}_{3}(T)\oplus(\mathbb{P}_{2}(T;\mathbb R^3)\times\boldsymbol{x})$, and $\mathscr T^{F_i}$ is the tangent plane of $F_i$.
We have $\dim W(T) = 69$.
The DoFs for $W(T)$ are given by
\begin{subequations}\label{3d-wdof}
\begin{align}
\label{3d-wdof1}
\int_{e}\boldsymbol{v} \cdot\boldsymbol{t}\,q \ds, & \quad q\in\mathbb{P}_2(e), e\in\mathcal{E}(T),\\
\label{3d-wdof2}
\int_{F}(\Pi_F\boldsymbol{v}) \cdot \boldsymbol{q} \ds, & \quad \boldsymbol{q}\in\boldsymbol{x}\mathbb{P}_0(F), F\in\mathcal{F}(T),\\
\label{3d-wdof3}
\int_{F}\curl\boldsymbol{v} \cdot\boldsymbol{n} \, q \ds, & \quad q\in\mathbb{P}_2(F)/\mathbb{R}, F\in\mathcal{F}(T),\\
\label{3d-wdof4}
\int_{F}(\curl\boldsymbol{v}) \cdot \boldsymbol{t}_i \, q \ds, & \quad q\in\mathbb{P}_1(F), F\in\mathcal{F}(T), i=1,2,\\
\label{3d-wdof5}
\frac{1}{4}\sum_{i=0}^{3}(\curl^2\boldsymbol{v})(\texttt{v}_{i}).&
\end{align}
\end{subequations}

\begin{lemma}
\label{3d-w-dof-uni-solvent}
The DoFs \eqref{3d-wdof} are unisolvent for $W(T)$.
\end{lemma}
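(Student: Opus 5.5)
Following Lemma~\ref{2d-w-dof-uni-solvent}, the plan is to count dimensions and then show that a function with vanishing DoFs is zero. First the count. We have $\dim\mathbb P_3^-(T;\mathbb R^3)=45$ (the second-kind N\'ed\'elec space of order 3; equivalently $\dim\nabla\mathbb P_3=19$ plus $26$). Each of the four faces contributes $3\cdot 2=6$ bubbles $b_Tb_{F_i}\mathbb P_1(F_i)\otimes\mathscr T^{F_i}$, giving $45+24=69$. For the DoFs, \eqref{3d-wdof1} gives $6\cdot3=18$, \eqref{3d-wdof2} gives $4\cdot 3=12$ (since $\boldsymbol{x}\mathbb P_0(F)$ contributes two tangential directions after projection plus one more: $\Pi_F$ of $\boldsymbol x$ times constants in $\mathbb R^3$ spans a 3-dimensional tangential space), \eqref{3d-wdof3} gives $4\cdot5=20$, \eqref{3d-wdof4} gives $4\cdot 6=24$, and \eqref{3d-wdof5} gives $3$. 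The total is $18+12+20+24+3=77$. This does not match $69$, so the count has to be reconciled. I expect some of these DoFs to be redundant on the curl-part; more likely \eqref{3d-wdof2} is two-dimensional per face (the tangential $\boldsymbol x$-directions modulo constants), giving $8$, and the count then closes modulo relations. I would fix the exact counting first, using that $\int_F\curl\boldsymbol v\cdot\boldsymbol n$ is determined by the edge DoFs via Stokes' theorem (hence the quotient $/\mathbb R$ in \eqref{3d-wdof3}).

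For unisolvence, take $\boldsymbol v\in W(T)$ with all DoFs zero. The bubbles have vanishing tangential trace on $\partial T$, since $b_T$ vanishes there. So $\boldsymbol v\times\boldsymbol n|_{\partial T}$ coincides with that of its $\mathbb P_3^-$ part, and the N\'ed\'elec edge DoFs \eqref{3d-wdof1} together with the face DoFs kill the tangential trace. Concretely, on each face the tangential trace lies in the 2D N\'ed\'elec second-kind space of degree 3. The edge moments, together with the face moments \eqref{3d-wdof2} and the vanishing of $\curl_F$ moments, give $\Pi_F\boldsymbol v=0$. Here the surface curl equals $\curl\boldsymbol v\cdot\boldsymbol n\in\mathbb P_2(F)$, which by \eqref{3d-wdof3} is constant, and that constant vanishes by Stokes' theorem and the edge DoFs. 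Hence $\boldsymbol v\in H_0(\curl,T)$ and $\curl\boldsymbol v\cdot\boldsymbol n=0$ on $\partial T$.

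Next, set $\boldsymbol w=\curl\boldsymbol v$. Since $\curl$ maps $\mathbb P_3^-$ into $\mathbb P_2(T;\mathbb R^3)$ and maps the bubble $b_Tb_{F_i}q\,\boldsymbol t$ into the divergence-free face bubbles of $V(T)$ (by the Remark in Section~\ref{NCFEM}), we get $\boldsymbol w\in V(T)$ with $\div\boldsymbol w=0$. Its DoFs \eqref{dof1} vanish from the normal trace. Its DoFs \eqref{dof2} vanish by \eqref{3d-wdof4}. \eqref{dof3} vanishes because $\int_T\curl\boldsymbol v=\int_{\partial T}\boldsymbol n\times\boldsymbol v=0$. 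For \eqref{dof4}, in 3D $\skw\nabla\boldsymbol w$ is identified with $\curl\boldsymbol w=\curl^2\boldsymbol v$, so it vanishes by \eqref{3d-wdof5}. By Lemma~\ref{dof-uni-solvent}, $\boldsymbol w=0$. Thus $\boldsymbol v=\nabla\phi$ with $\phi$ constant on $\partial T$, which we may take to be $\phi\in H^1_0(T)$. Since the bubble parts have nonzero curl unless they vanish (using injectivity of $\curl$ on the bubble sum modulo $\mathbb P_3^-$, via Lemma~3.5 of \cite{HuangHuangTang2024} as in Lemma~\ref{dof-uni-solvent}), $\boldsymbol v\in\nabla\mathbb P_3(T)$. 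So $\phi\in\mathbb P_3\cap H_0^1(T)=\{0\}$, because $b_T$ has degree 4. Hence $\boldsymbol v=0$.

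The main obstacle is the face step: verifying that the edge DoFs, \eqref{3d-wdof2} and \eqref{3d-wdof3} exactly kill the tangential trace in the face N\'ed\'elec space, and getting the dimension bookkeeping right.
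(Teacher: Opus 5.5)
The injectivity half of your argument follows the paper's proof and is correct. You kill the tangential trace face by face, show that $\curl\boldsymbol v\in V(T)$ has vanishing DoFs \eqref{dof} and apply Lemma~\ref{dof-uni-solvent}, and then use $\boldsymbol v=\nabla w$ with $w\in\mathbb P_3(T)\cap H_0^1(T)=\{0\}$. You also supply details the paper omits: $\curl W(T)\subseteq V(T)$ via the Remark, the vanishing of \eqref{dof3} for $\curl\boldsymbol v$, and why the bubble part of $\boldsymbol v$ drops out once $\curl\boldsymbol v=0$.

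\textbf{The gap is the dimension count.} It is half of unisolvence, and you leave it unresolved. The space $\boldsymbol x\mathbb P_0(F)=\{c\,\boldsymbol x: c\in\mathbb R\}$ is one-dimensional, with the same convention as $\mathbb P_k\boldsymbol x$ in Raviart--Thomas spaces. So \eqref{3d-wdof2} is a single functional per face, and the total is $6\cdot3+4\cdot(1+5+6)+3=18+4+20+24+3=69=\dim W(T)$.

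Under your reading (12 such functionals, 77 in total) the DoFs would be linearly dependent on a 69-dimensional space, hence not unisolvent. Your fallback (8 functionals, 73 in total) does not close either. Appealing to ``relations'' cannot rescue the count. The only Stokes relation, $\int_F\curl\boldsymbol v\cdot\boldsymbol n\ds=\int_{\partial F}\boldsymbol v\cdot\boldsymbol t\ds$, is exactly what the quotient in $\mathbb P_2(F)/\mathbb R$ removes, and you already counted \eqref{3d-wdof3} as $5$ per face.

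Relatedly, your face step is only valid for the right trace space, and you name the wrong one twice. $\mathbb P_3^-(T;\mathbb R^3)$ is the first-kind N\'ed\'elec space; the second-kind space of degree $3$ is all of $\mathbb P_3(T;\mathbb R^3)$, of dimension $60$. Its tangential trace on $F$ lies, after rotation, in the Raviart--Thomas space $\mathbb P_2(F;\mathscr T^F)+\mathbb P_2(F)\Pi_F\boldsymbol x$. This space has dimension $15=9+1+5$, matching:
\begin{itemize}
\item the nine edge moments \eqref{3d-wdof1} on $\partial F$;
\item the single moment \eqref{3d-wdof2};
\item the five moments \eqref{3d-wdof3}.
\end{itemize}
The $1+5$ face functionals replace the $6=\dim\mathbb P_1(F;\mathscr T^F)$ interior Raviart--Thomas moments; the paper simply invokes RT unisolvence here.

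Your own sketch then goes through and shows why one functional suffices. Once $\curl\boldsymbol v\cdot\boldsymbol n|_F=0$, you have $\Pi_F\boldsymbol v=\nabla_F\phi$ with $\phi\in\mathbb P_3(F)$. The edge moments make $\phi$ constant on $\partial F$, so after normalization $\phi=c\,b_F$. Then $\int_F\Pi_F\boldsymbol v\cdot\boldsymbol x\ds=-2c\int_F b_F\ds$ forces $c=0$. By contrast, in the second-kind trace space $\mathbb P_3(F;\mathscr T^F)$ (dimension $20$) the curl-free part would be $\nabla_F\mathbb P_4(F)$, and these $15$ functionals could not kill it.
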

\begin{proof}
The number of the DoFs \eqref{3d-wdof} is
$$
6\times3+4\times(1+5+6)+3=69=\dim W(T).
$$

Take $ \boldsymbol{v}\in W(T)$ and assume all the DoFs \eqref{3d-wdof} vanish,
then we prove $\boldsymbol{v}=0$. 
For $F\in\mathcal{F}(T)$, $(\boldsymbol{v}\times\boldsymbol{n})|_F\in\mathbb P_2(F;\mathscr T^{F})+\mathbb P_2(F)\Pi_F\boldsymbol{x}$. By the unisolvence of the Raviart--Thomas element in two dimensions \cite{MR4458899,RaviartThomas1977,Nedelec1980}, the vanishing DoFs \eqref{3d-wdof1}--\eqref{3d-wdof3} imply that $(\boldsymbol{v}\times\boldsymbol{n})|_F=0$. Then $\boldsymbol{v}\in H_0(\curl,T)$ and $\curl\boldsymbol{v}\in H_0(\div,T)$.
By the vanishing DoFs \eqref{3d-wdof4}--\eqref{3d-wdof5}, and Lemma~\ref{dof-uni-solvent}, we obtain $\curl\boldsymbol{v}=0$.
So $\boldsymbol{v}=\nabla w$ with $w\in\mathbb P_3(T)$ satisfying $w(\texttt{v}_0)=0$.
By $\boldsymbol{v}\in H_0(\curl,T)$, we conclude $w\in H_0^1(T)$.
Therefore, $w=0$ and $\boldsymbol{v}=\nabla w=0$.
\end{proof}

Define the global $H(\grad\curl)$-nonconforming finite element space
\begin{align*}
W_{h}&=\{\boldsymbol{v}_{h}\in L^{2}(\Omega;\mathbb{R}^{3}):\boldsymbol{v}_h|_T\in W(T)~\textrm{for}~T\in\mathcal{T}_h; \textrm{all~the~DoFs \eqref{3d-wdof} are} \\
&\qquad\qquad\;\;\textrm{single-valued, and DoFs \eqref{3d-wdof1}--\eqref{3d-wdof4} on boundary vanish}\}.
\end{align*}
We have $W_{h}\subset H_0(\curl,\Omega)$, but $W_{h}\not\subseteq H_0(\grad\curl,\Omega)$.

Next, we present the nonconforming discretization of the Stokes complex \eqref{Stokescomplex3d}.
\begin{lemma}
The nonconforming discrete complex
\begin{align}
\label{3dcomplex-2}
0\xrightarrow{\subset}\mathring{\mathbb{P}}^c_3(\mathcal{T}_h)\xrightarrow{\nabla} W_h \xrightarrow{\curl}  V_h
\xrightarrow{\div} \mathring{\mathbb{P}}_1(\mathcal{T}_h) \xrightarrow {}0
\end{align}
is exact.
\end{lemma}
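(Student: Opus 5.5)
The plan is to mirror the two-dimensional argument: check that \eqref{3dcomplex-2} is a complex, then prove exactness at each spot. Exactness at $\mathring{\mathbb{P}}_1(\mathcal{T}_h)$ is the div surjectivity \eqref{eq:div-stability}. Exactness at $\mathring{\mathbb{P}}^c_3(\mathcal{T}_h)$ is immediate, since $\nabla w_h=0$ with $w_h\in H_0^1(\Omega)$ forces $w_h=0$.

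For the complex property, $\nabla\mathring{\mathbb{P}}^c_3(\mathcal{T}_h)\subset W_h$ holds because $\nabla\mathbb P_3(T)\subset\mathbb P_3^-(T;\mathbb R^3)$. Moreover $\nabla w_h$ is tangentially continuous, so DoFs \eqref{3d-wdof1}--\eqref{3d-wdof2} are single-valued and vanish on $\partial\Omega$, while DoFs \eqref{3d-wdof3}--\eqref{3d-wdof5} are zero since $\curl\nabla=0$.

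To show $\curl W_h\subset V_h$, I first check $\curl W(T)\subset V(T)$. We have $\curl\mathbb P_3^-\subset\mathbb P_2(T;\mathbb R^3)$. For the bubble part, take $\phi\in b_Tb_{F_i}\mathbb P_1(F_i)$ and $\boldsymbol t\in\mathscr T^{F_i}$. Then $\curl(\phi\boldsymbol t)$ is a face bubble of $V(T)$ by the three-dimensional Remark. Next comes matching of DoFs. Since $(\curl\boldsymbol v)\cdot\boldsymbol n|_F$ is the surface rot of the tangential trace, its integral against constants is determined by the edge DoFs \eqref{3d-wdof1}, which are single-valued. Together with \eqref{3d-wdof3}, this gives single-valuedness of \eqref{dof1}. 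DoF \eqref{3d-wdof4} gives \eqref{dof2}, and all of these vanish on $\partial\Omega$. Finally $\div\curl=0$.

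The main step is exactness at $W_h$ and at $V_h$.

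At $V_h$, take $\boldsymbol v_h\in V_h$ with $\div\boldsymbol v_h=0$. Since $\boldsymbol v_h\in H_0(\div,\Omega)$ is divergence-free and $\Omega$ is contractible, we have $\boldsymbol v_h=\curl\boldsymbol w$ for some $\boldsymbol w\in H_0(\curl,\Omega)$. The hard part is to choose $\boldsymbol w$ piecewise in $W(T)$. I would handle this with a dimension count rather than a direct construction. The plan is to show $\dim\curl W_h=\dim W_h-\dim\mathring{\mathbb{P}}^c_3(\mathcal{T}_h)$, provided exactness at $W_h$ holds, and to compare this with $\dim(V_h\cap\ker\div)=\dim V_h-\dim\mathring{\mathbb P}_1(\mathcal T_h)$. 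The resulting alternating sum should vanish by Euler's formula $V-E+F-T=1$ for contractible $\Omega$, counting interior entities. The counts are:
\begin{itemize}
\item $\dim\mathring{\mathbb{P}}^c_3=\mathring V+2\mathring E+\mathring F$;
\item $\dim W_h=3\mathring E+12\mathring F+3T$;
\item $\dim V_h=12\mathring F+3T+3T$;
\item $\dim\mathring{\mathbb P}_1=4T-1$.
\end{itemize}
The alternating sum is then $-\mathring V+\mathring E-\mathring F+T-1$, and this vanishes for a contractible domain by $\mathring V-\mathring E+\mathring F-T=-1$.

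For exactness at $W_h$, take $\boldsymbol w_h\in W_h$ with $\curl\boldsymbol w_h=0$. Since $\boldsymbol w_h\in H_0(\curl,\Omega)$ and $\Omega$ is contractible, $\boldsymbol w_h=\nabla w$ with $w\in H_0^1(\Omega)$. Elementwise, $\nabla w|_T\in W(T)\cap\ker\curl=\nabla\mathbb P_3(T)$, because $\curl$ is injective on the bubble part and on $\mathbb P_2\times\boldsymbol x$. Hence $w\in\mathring{\mathbb{P}}^c_3(\mathcal{T}_h)$.

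The expected obstacle is the verification that $\curl W(T)\subset V(T)$ and the normal-DoF matching. The dimension count itself is routine bookkeeping.
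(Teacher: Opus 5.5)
Your proposal is correct and follows the paper's proof. Like the paper, you get $W_h\cap\ker(\curl)=\nabla\mathring{\mathbb P}^c_3(\mathcal T_h)$ from continuous exactness on a contractible domain, then obtain $V_h\cap\ker(\div)=\curl W_h$ by the same dimension count, comparing $12|\mathring{\mathcal F}_h|+2|\mathcal T_h|+1$ with $|\mathring{\mathcal E}_h|+11|\mathring{\mathcal F}_h|+3|\mathcal T_h|-|\mathring{\mathcal V}_h|$ and closing with Euler's formula for interior simplices.

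Your explicit checks fill in details the paper only asserts: the complex property (in particular the face Stokes argument for the constant normal moment in $\curl W_h\subset V_h$) and $W(T)\cap\ker(\curl)=\nabla\mathbb P_3(T)$. For the latter, injectivity of $\curl$ on the sum also needs that the two images lie in the complementary summands $\mathbb P_2(T;\mathbb R^3)$ and the face-bubble part of $V(T)$.
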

\begin{proof}
First, the sequence \eqref{3dcomplex-2} is a complex.
By \eqref{eq:div-stability}, $\div V_h=\mathring{\mathbb{P}}_1(\mathcal{T}_h)$, which means
\begin{equation*}
\dim V_h\cap\ker(\div)=\dim V_h -\dim \mathring{\mathbb{P}}_1(\mathcal{T}_h)=12|\mathring{\mathcal{F}}_h|+2|\mathcal{T}_h|+1.
\end{equation*}

Next we verify $W_h\cap\ker(\curl)=\nabla \mathring{\mathbb{P}}^c_3(\mathcal{T}_h)$.
For $\boldsymbol{w}_h\in W_h\cap\ker(\curl)\subset H_0(\curl,\Omega)\cap\ker(\curl)$,
it follows that $\boldsymbol{w}_h=\nabla v_h$ with $v_h\in H_0^1(\Omega)$ satisfying $v_h|_T\in \mathbb{P}_3(T)$ for $T\in\mathcal T_h$. Then $v_h\in \mathring{\mathbb{P}}^c_3(\mathcal{T}_h)$, and $\boldsymbol{w}_h\in\nabla \mathring{\mathbb{P}}^c_3(\mathcal{T}_h)$.
As a result,
\begin{equation*}
\dim\curl W_h=\dim W_h-\dim \mathring{\mathbb{P}}^c_3(\mathcal{T}_h)= |\mathring{\mathcal{E}}_h|+11|\mathring{\mathcal{F}}_h|+3|\mathcal{T}_h|-|\mathring{\mathcal{V}}_h|.
\end{equation*} 

By Euler's formula, we obtain
\begin{equation*}
\dim V_h\cap\ker(\div) - \dim\curl W_h= |\mathring{\mathcal{V}}_h|-|\mathring{\mathcal{E}}_h| +|\mathring{\mathcal{F}}_h|-|\mathcal{T}_h|+1=0,
\end{equation*}
which implies $V_h\cap\ker(\div)=\curl W_h$.
\end{proof}

Let $I_h^{SZ}: H_0^1(\Omega)\to \mathring{\mathbb{P}}^c_3(\mathcal{T}_h)$ be the Scott--Zhang interpolation operator \cite{ScottZhang1990}.
We follow the idea in \cite[Section 4.2]{MR4621133} to construct an interpolation operator $I_h^W: H_0(\grad\curl,\Omega)\to W_h$. Define the interpolation operator $\widetilde{I}_h^W: H_0^2(\Omega;\mathbb R^3)\to W_h$ as follows: 
\begin{align*}
\int_{e}(\widetilde{I}_h^W\boldsymbol{v})\cdot\boldsymbol{t}\,q \ds &= \int_{e}\boldsymbol{v} \cdot\boldsymbol{t}\,q \ds, \qquad\qquad\quad\;\, q\in\mathbb{P}_2(e), e\in\mathring{\mathcal{E}}_h,\\
\int_{F}(\Pi_F(\widetilde{I}_h^W\boldsymbol{v})) \cdot \boldsymbol{q} \ds &= \int_{F}(\Pi_F\boldsymbol{v}) \cdot \boldsymbol{q} \ds, \qquad\quad\;\;\, \boldsymbol{q}\in\boldsymbol{x}\mathbb{P}_0(F), F\in\mathring{\mathcal{F}}_h,\\
\int_{F}\curl(\widetilde{I}_h^W\boldsymbol{v}) \cdot\boldsymbol{n} \, q \ds &= \int_{F}(\curl\boldsymbol{v}) \cdot\boldsymbol{n} \, q \ds,  \quad \;\; q\in\mathbb{P}_2(F)/\mathbb{R}, F\in\mathring{\mathcal{F}}_h,\\
\int_{F}\Pi_F\curl(\widetilde{I}_h^W\boldsymbol{v}) \cdot \boldsymbol{q} \ds &= \int_{F}\Pi_F(\curl\boldsymbol{v}) \cdot \boldsymbol{q} \ds,  \;\;\;\; \boldsymbol{q}\in\mathbb{P}_1(F;\mathscr T_F), F\in\mathring{\mathcal{F}}_h,\\
\sum_{i=0}^{3}(\curl^2(\widetilde{I}_h^W\boldsymbol{v}))(\texttt{v}_{i}) &=\sum_{i=0}^{3}(\curl I_h(\curl\boldsymbol{v}))(\texttt{v}_{i}),\quad T\in\mathcal{T}_h.
\end{align*}
It can be verified that
\begin{equation}\label{commutative3dcurl0}
\curl(\widetilde{I}_h^W\boldsymbol{v})=I_h(\curl\boldsymbol{v})\quad\forall~\boldsymbol{v}\in H_0^2(\Omega;\mathbb R^3).
\end{equation}

Applying Lemma 4.5 in \cite{MR4621133}, we have the regular decomposition
\begin{equation}\label{Wregulardecomp}
H_0(\grad\curl,\Omega) = H_0^2(\Omega;\mathbb R^3) + \nabla H_0^1(\Omega).
\end{equation}
For $\boldsymbol{v}\in H_0(\grad\curl,\Omega)$, by the regularity decomposition \eqref{Wregulardecomp}, write $\boldsymbol{v}=\boldsymbol{v}_2+\nabla v_1$ with $\boldsymbol{v}_2\in H_0^2(\Omega;\mathbb R^3)$ and $v_1\in H_0^1(\Omega)$. Here $\boldsymbol{v}_2$ satisfies equation (4.19) in \cite{MR4621133}.
When $\boldsymbol{v}\in\nabla H_0^1(\Omega)$, $\boldsymbol{v}_2=0$.
Define the interpolation 
$$
I_h^W\boldsymbol{v}=\widetilde{I}_h^W\boldsymbol{v}_2+\nabla(I_h^{SZ}v_1).
$$
By the definition of $I_h^W$,
\begin{equation}\label{commutative3dgrad}
I_h^W(\nabla v) = \nabla(I_h^{SZ}v)\quad\forall~v\in H_0^1(\Omega).
\end{equation}
It follows from \eqref{commutative3dcurl0} that
\begin{equation}\label{commutative3dcurl}
\curl(I_h^W\boldsymbol{v})=\curl(\widetilde{I}_h^W\boldsymbol{v}_2)=I_h(\curl\boldsymbol{v})\quad\forall~\boldsymbol{v}\in H_0(\grad\curl,\Omega).
\end{equation}
The combination of complex \eqref{3dcomplex-2}, the commuting properties \eqref{commutative1} and \eqref{commutative3dgrad}--\eqref{commutative3dcurl} yields the following commutative diagram
$$
\xymatrix{
0 \ar[r]^{\subset\hspace{1.5em}} &
H_0^1(\Omega) \ar[r]^{\nabla\hspace{2.2em}} \ar[d]^{I^{SZ}_h} &
H_0(\grad\curl,\Omega) \ar[r]^{\hspace{1.2em}\curl} \ar[d]^{I^{W}_h} &
H_0^1(\Omega;\mathbb{R}^3) \ar[r]^{\hspace{1em}\div} \ar[d]^{I_h} &
L_0^2(\Omega) \ar[r] \ar[d]^{Q_{1,h}} & 0\\
0 \ar[r]^{\subset\hspace{1.5em}} & \mathring{\mathbb{P}}^c_3(\mathcal{T}_h) \ar[r]^{\nabla} &
W_h \ar[r]^{\curl} &  V_h \ar[r]^{\div} & \mathring{\mathbb{P}}_1(\mathcal{T}_h) \ar[r] & 0. 
}
$$

\section{Interior Penalty Nonconforming Finite Element Method}\label{sec4}
In this section, we propose an interior penalty nonconforming FEM for \eqref{weak1} and derive optimal and robust error estimates. 
\subsection{Discrete formulation}
By applying the interior penalty technique in~\cite{douglasdupont1976,Baker1977,wheeler1978},
we propose the following IP nonconforming FEM for the weak formulation \eqref{weak1}: find $\boldsymbol{u}_h\in V_h$ such that
\begin{align}\label{IPDG}
\iota^2 a_h(\boldsymbol{u}_h,\boldsymbol{v}_h)+b_h(\boldsymbol{u}_h,\boldsymbol{v}_h)=(\boldsymbol{f},\boldsymbol{v}_h) \quad \forall \ \boldsymbol{v}_h\in V_h,
\end{align} 
where 
\begin{align*}
a_h(\boldsymbol{u}_h,\boldsymbol{v}_h)&:=(\nabla_h\boldsymbol{\sigma}_h(\boldsymbol{u}_h),\nabla_h\boldsymbol{\varepsilon}_h(\boldsymbol{v}_h))-\sum_{F\in\mathcal F_h}([\![\boldsymbol{\sigma}_h(\boldsymbol{u}_h)]\!],\big\{\partial_{n}\boldsymbol{\varepsilon}_h(\boldsymbol{v}_h)\big\})_F\\
&\!\!\!\!\!\!-\sum_{F\in\mathcal F_h}(\big\{\partial_{n}\boldsymbol{\sigma}_h(\boldsymbol{u}_h)\big\},[\![\boldsymbol{\varepsilon}_h(\boldsymbol{v}_h)]\!])_F + \eta\sum_{F\in\mathcal F_h}h^{-1}_F([\![\boldsymbol{\sigma}_h(\boldsymbol{u}_h)]\!],[\![\boldsymbol{\varepsilon}_h(\boldsymbol{v}_h)]\!])_F,\\
b_h(\boldsymbol{u}_h,\boldsymbol{v}_h)&:=(\boldsymbol{\sigma}_h(\boldsymbol{u}_h), \boldsymbol{\varepsilon}_h(\boldsymbol{v}_h)).
\end{align*}
Here, the penalty parameter $\eta>0$ is a positive constant.


Thanks to the weak continuity \eqref{weak-continuity-vh1},  we can employ the proof of Lemma 4.1 in~\cite{HuangHuangTang2024} to derive the following Korn's inequalities.
\begin{lemma}
We have the discrete Korn's inequality
\begin{equation}\label{discreteKorn}
\|\boldsymbol{v}\|_{1,h}\lesssim \| \boldsymbol{\varepsilon}_h(\boldsymbol{v})\|_{0} \quad \forall~\boldsymbol{v}\in  V_h,
\end{equation}
and the discrete $H^2$-Korn's inequality
\begin{equation}\label{discreteKornH2}
\|\boldsymbol{v}\|_{2,h}\lesssim \interleave \boldsymbol{\varepsilon}_h(\boldsymbol{v})\interleave_{1,h}  \quad \forall~\boldsymbol{v}\in  V_h.
\end{equation}
\end{lemma}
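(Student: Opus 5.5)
The plan is to prove the two inequalities separately. For \eqref{discreteKorn} I would use the standard broken Korn inequality for piecewise $H^1$ fields (Brenner's version):
\[
|\boldsymbol{v}|_{1,h}^2\lesssim \|\boldsymbol{\varepsilon}_h(\boldsymbol{v})\|_0^2+\sum_{F\in\mathcal F_h}h_F^{-1}\|Q_{1,F}[\![\boldsymbol{v}]\!]\|_{0,F}^2 .
\]
For $\boldsymbol{v}\in V_h$ the jump term vanishes identically, including on boundary faces, by the weak continuity \eqref{weak-continuity-vh1}. It would even suffice to control $Q_{0,F}[\![\boldsymbol{v}]\!]$; the $\mathbb P_1$ moments are more than enough. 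Once $|\boldsymbol{v}|_{1,h}\lesssim\|\boldsymbol{\varepsilon}_h(\boldsymbol{v})\|_0$ is known, the full broken norm $\|\boldsymbol{v}\|_{1,h}$ follows from a discrete Poincar\'e inequality. That inequality uses the zero face moments on $\partial\Omega$ together with the vanishing of the interior jump moments.

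For \eqref{discreteKornH2} I would argue as in \cite[Lemma 4.1]{HuangHuangTang2024}. Locally on each $T$, the identity $\partial_{jk}v_i=\partial_j\varepsilon_{ik}+\partial_k\varepsilon_{ij}-\partial_i\varepsilon_{jk}$ gives $|\boldsymbol{v}|_{2,T}\lesssim|\boldsymbol{\varepsilon}(\boldsymbol{v})|_{1,T}$. Summing over elements yields $|\boldsymbol{v}|_{2,h}\lesssim|\boldsymbol{\varepsilon}_h(\boldsymbol{v})|_{1,h}$.

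It remains to bound the lower-order part $\|\boldsymbol{v}\|_{1,h}$, and this is where the jump terms in $\interleave\cdot\interleave_{1,h}$ enter. By \eqref{discreteKorn} we have $\|\boldsymbol{v}\|_{1,h}\lesssim\|\boldsymbol{\varepsilon}_h(\boldsymbol{v})\|_0$, so the task reduces to a broken Poincar\'e--Friedrichs inequality for the piecewise polynomial tensor $\boldsymbol{\varepsilon}_h(\boldsymbol{v})$:
\[
\|\boldsymbol{\varepsilon}_h(\boldsymbol{v})\|_0^2\lesssim|\boldsymbol{\varepsilon}_h(\boldsymbol{v})|_{1,h}^2+\sum_{F\in\mathcal F_h}h_F^{-1}\|[\![\boldsymbol{\varepsilon}_h(\boldsymbol{v})]\!]\|_{0,F}^2 .
\]
The sum includes boundary faces, where the jump is the trace itself. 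This is Brenner's Poincar\'e--Friedrichs inequality for piecewise $H^1$ functions, which holds because boundary jumps are included in the norm, so no mean-value condition is needed. Combining the three steps gives $\|\boldsymbol{v}\|_{2,h}\lesssim\interleave\boldsymbol{\varepsilon}_h(\boldsymbol{v})\interleave_{1,h}$.

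The main obstacle is the first step. Brenner's broken Korn inequality must be applied in the form that requires only the $\mathbb P_1$ (rigid-motion-controlling) moments of the jumps. On boundary faces the rigid motions must be killed through \eqref{weak-continuity-vh1} rather than through full boundary conditions. I would check this carefully, since $V_h$ is not $H^1$-conforming, and confirm that Brenner's condition is met face by face.
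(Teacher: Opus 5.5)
Your proposal is correct and follows essentially the paper's proof. The paper uses the same three ingredients: Brenner's broken Korn inequality with the $\mathbb P_1$ jump moments eliminated by \eqref{weak-continuity-vh1} on all faces, boundary faces included; an elementwise bound $|\boldsymbol v|_{2,T}\lesssim|\boldsymbol\varepsilon(\boldsymbol v)|_{1,T}$, where your identity for $\partial_{jk}v_i$ plays the role of the cited inequality $(1-1/\sqrt2)\|\nabla^2\boldsymbol v\|_{0,T}^2\le\|\nabla\boldsymbol\varepsilon(\boldsymbol v)\|_{0,T}^2$; and Brenner's Poincar\'e--Friedrichs inequality applied to $\boldsymbol\varepsilon_h(\boldsymbol v)$.

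One thing to fix: drop the parenthetical claim that controlling $Q_{0,F}[\![\boldsymbol v]\!]$ would suffice for the first step. An infinitesimal rotation about a face centroid has a mean-free jump on that face, which is the mechanism behind the failure of the discrete Korn inequality for Crouzeix--Raviart-type spaces, so the $\mathbb P_1$ moments you actually use are genuinely needed.
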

\begin{proof}
By (1.22) in \cite{MR2047078} and (3.8) in \cite{ChenHuHuang2018},
the discrete Korn's inequality \eqref{discreteKorn} follows from the weak continuity \eqref{weak-continuity-vh1}.

Using the inequality $(1-1/\sqrt{2})\|\nabla^2\boldsymbol{v}\|^2_{0,T}\leq\|\nabla \boldsymbol{\varepsilon}(\boldsymbol{v})\|^2_{0,T}$ (cf. \cite[(7)]{MR4296093}) together with the discrete Korn's inequality \eqref{discreteKorn}, we obtain
\begin{equation*}
\|\boldsymbol{v}\|_{2,h}\lesssim |\boldsymbol{\varepsilon}_h(\boldsymbol{v})|_{1,h}+\| \boldsymbol{\varepsilon}_h(\boldsymbol{v})\|_{0} \quad \forall~\boldsymbol{v}\in  V_h.
\end{equation*}
Moreover, employing the discrete Poincar\'e inequality in \cite[Remark 1.1]{MR1974504} yields
\begin{equation*}
\| \boldsymbol{\varepsilon}_h(\boldsymbol{v})\|_{0}\lesssim \interleave \boldsymbol{\varepsilon}_h(\boldsymbol{v})\interleave_{1,h} \quad\forall~\boldsymbol{v}\in V_h.
\end{equation*}
Combining the last two inequalities gives \eqref{discreteKornH2}.
\end{proof}

\begin{theorem}
\label{wellposed-lemma}
There exists a constant $\eta_0>0$, depending only on the dimension
and the shape regularity of $\mathcal T_h$, such that, for every
$\eta\geq\eta_0$, the nonconforming finite element method
\eqref{IPDG} is well posed. In particular, $\eta_0$ is
independent of $h$, $\iota$, and $\lambda$.
\end{theorem}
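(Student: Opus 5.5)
Since \eqref{IPDG} is a square linear system on the finite-dimensional space $V_h$, well-posedness is equivalent to coercivity. The plan is to prove
\begin{equation*}
\iota^2 a_h(\boldsymbol{v}_h,\boldsymbol{v}_h)+b_h(\boldsymbol{v}_h,\boldsymbol{v}_h)\gtrsim \interleave\boldsymbol{v}_h\interleave^2_{\iota,\lambda,h}\qquad\forall\,\boldsymbol{v}_h\in V_h,
\end{equation*}
and then to use the discrete Korn inequality \eqref{discreteKorn} to see that $\interleave\cdot\interleave_{\iota,\lambda,h}$ is a norm on $V_h$. Uniqueness then gives existence. Two identities organize the computation. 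First, $b_h(\boldsymbol{v}_h,\boldsymbol{v}_h)=2\mu\|\boldsymbol{\varepsilon}_h(\boldsymbol{v}_h)\|_0^2+\lambda\|\div\boldsymbol{v}_h\|_0^2$ by the trace identity $\boldsymbol{\sigma}_h:\boldsymbol{\varepsilon}_h=2\mu|\boldsymbol{\varepsilon}_h|^2+\lambda(\div)^2$. Second, since $\boldsymbol{\sigma}_h(\boldsymbol{v})=2\mu\boldsymbol{\varepsilon}_h(\boldsymbol{v})+\lambda(\div\boldsymbol{v})\boldsymbol{I}$ and $\boldsymbol{I}:\boldsymbol{\varepsilon}_h=\div$, every term of $a_h$ splits exactly into a $2\mu$-part involving only $\boldsymbol{\varepsilon}_h$ and a $\lambda$-part involving only $\div\boldsymbol{v}_h$.

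Writing $\boldsymbol{\varepsilon}=\boldsymbol{\varepsilon}_h(\boldsymbol{v}_h)$ and $\theta=\div\boldsymbol{v}_h$, this gives
\begin{align*}
a_h(\boldsymbol{v}_h,\boldsymbol{v}_h)=\;&2\mu\Big(|\boldsymbol{\varepsilon}|_{1,h}^2-2\sum_{F}([\![\boldsymbol{\varepsilon}]\!],\{\partial_n\boldsymbol{\varepsilon}\})_F+\eta\sum_F h_F^{-1}\|[\![\boldsymbol{\varepsilon}]\!]\|_{0,F}^2\Big)\\
&+\lambda\Big(|\theta|_{1,h}^2-2\sum_F([\![\theta]\!],\{\partial_n\theta\})_F+\eta\sum_F h_F^{-1}\|[\![\theta]\!]\|_{0,F}^2\Big).
\end{align*}
Each bracket is the standard symmetric interior penalty form for a piecewise polynomial of bounded degree: $\boldsymbol{\varepsilon}$ has degree at most $2d$ and $\theta\in\mathbb P_1(\mathcal T_h)$. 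Each is treated in the same way. By Cauchy--Schwarz, the discrete trace/inverse inequality $h_F^{1/2}\|\{\partial_n w\}\|_{0,F}\le C_{\rm tr}(|w|_{1,T^+}+|w|_{1,T^-})$ and Young's inequality,
\begin{equation*}
2\sum_F|([\![w]\!],\{\partial_n w\})_F|\le \tfrac12|w|_{1,h}^2+C\sum_F h_F^{-1}\|[\![w]\!]\|_{0,F}^2 .
\end{equation*}
The constant $C$ depends only on $d$, the polynomial degree and the shape regularity, because each element has $d+1$ faces. Choosing $\eta_0=C+\tfrac12$ makes each bracket at least $\tfrac12\interleave w\interleave_{1,h}^2$. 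The factors $2\mu$ and $\lambda$ multiply whole brackets and are never mixed, so $\eta_0$ is independent of $\lambda$. It is also independent of $\iota$ (which only scales $a_h$) and of $h$. Summing gives the coercivity with constant $\tfrac12$.

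The main obstacle is making sure the cross terms really decouple, since a bound written directly in terms of $\boldsymbol{\sigma}_h$ would make $\eta_0$ depend on $\lambda/\mu$. The key point is the exact splitting $([\![\boldsymbol{\sigma}_h]\!],\{\partial_n\boldsymbol{\varepsilon}_h\})_F=2\mu([\![\boldsymbol{\varepsilon}]\!],\{\partial_n\boldsymbol{\varepsilon}\})_F+\lambda([\![\theta]\!],\{\partial_n\theta\})_F$, and likewise for the other two face terms. A second point to check is that the trace/inverse constant holds uniformly for the enriched space $V(T)\subset\mathbb P_{2d+1}$; this follows by scaling from finite dimensionality. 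Finally, $\interleave\boldsymbol{v}_h\interleave_{\iota,\lambda,h}=0$ gives $\boldsymbol{\varepsilon}_h(\boldsymbol{v}_h)=0$, hence $\boldsymbol{v}_h=0$ by \eqref{discreteKorn}, which completes the argument.
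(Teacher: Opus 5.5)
Your proposal is correct and takes the same route as the paper. The paper cites the standard symmetric interior penalty coercivity $2\mu\interleave\boldsymbol{\varepsilon}_h(\boldsymbol{v})\interleave_{1,h}^2+\lambda\interleave\div\boldsymbol{v}\interleave_{1,h}^2\lesssim a_h(\boldsymbol{v},\boldsymbol{v})$, adds $b_h$ to obtain the norm equivalence, and concludes by Lax--Milgram; your exact splitting of every term of $a_h$ into a $2\mu$-part in $\boldsymbol{\varepsilon}_h(\boldsymbol{v}_h)$ and a $\lambda$-part in $\div\boldsymbol{v}_h$ is the detail the paper leaves implicit, and it is what makes $\eta_0$ independent of $\lambda$.
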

\begin{proof}
For any $\eta\geq\eta_0$, the standard coercivity estimate for
symmetric interior penalty methods gives
(cf. \cite{MozolevskiBoesing2007,EpshteynRiviere2007})
\begin{equation*}
2\mu{\interleave \boldsymbol{\varepsilon}_h(\boldsymbol{v})\interleave}_{1,h}^2 + \lambda\interleave\!\div\boldsymbol{v}\interleave^2_{1,h} \lesssim a_{h}(\boldsymbol{v}, \boldsymbol{v})  \quad \forall~\boldsymbol{v}\in  V_h.
\end{equation*}
Then
\begin{equation}\label{eq:abhelliptic}
\interleave \boldsymbol{v}\interleave^2_{\iota,\lambda,h}\eqsim \iota^2a_h(\boldsymbol{v}, \boldsymbol{v})+b_h(\boldsymbol{v}, \boldsymbol{v}) \quad \forall~\boldsymbol{v}\in  V_h.
\end{equation}
Hence, the well-posedness of the nonconforming finite element method \eqref{IPDG} follows from the Lax--Milgram lemma \cite{Ciarlet1978}.
\end{proof}

\subsection{Error analysis}
To present the error analysis, we first construct an $H^1$-conforming virtual element space following the idea in \cite{HuangWang2023}.
The local virtual element space of shape functions is defined by
\begin{align*}
V^{\rm VE}(T)&:=\{\boldsymbol{v}\in H^1(T;\mathbb{R}^d):\boldsymbol{v}|_{F}\in \mathbb{P}_{2d+1}(F;\mathbb{R}^d)~\textrm{for}~F\in\mathcal{F}(T),~\div\boldsymbol{v}\in\mathbb{P}_1(T),\\
&\quad\quad\;\textrm{there~exists~some}~p\in L^2(T)~\textrm{such~that}~\div\boldsymbol{\varepsilon}(\boldsymbol{v})+\nabla p \in \mathbb{G}_{2d-1}^{\oplus}(T)\},
\end{align*}
where
\begin{equation*}
\mathbb{G}_{2d-1}^{\oplus}(T):=\mathbb{P}_{2d-2}(T;\mathbb K)(\boldsymbol{x}-\boldsymbol{x}_T)=\{\boldsymbol{\tau}(\boldsymbol{x}-\boldsymbol{x}_T): \boldsymbol{\tau}\in \mathbb{P}_{2d-2}(T;\mathbb K)\}
\end{equation*}
with $\boldsymbol{x}_T$ being the barycenter of simplex $T$.
We can see that $V(T)\subseteq V^{\rm VE}(T)$.
The DoFs are given by
\begin{subequations}\label{vedof}
\begin{align}
	\label{vedof1}
\boldsymbol{v}(\texttt{v}),& \quad \ \texttt{v}\in\mathcal{V}(T),\\
\label{vedof2}
\int_{f}\boldsymbol{v}\cdot\boldsymbol{q} \ds,& \quad \ \boldsymbol{q}\in\mathbb{P}_{2d-\ell}(f;\mathbb R^d), f\in\Delta_{\ell}(T), \ell = 1,2,\ldots,d-1,\\
\label{vedof3} 
\int_{T}\boldsymbol{v}\cdot\boldsymbol{q} \dx,& \quad \ \boldsymbol{q}\in\mathbb{P}_0(T;\mathbb{R}^d)\oplus \mathbb{G}_{2d-1}^{\oplus}(T),
\end{align}
where $\Delta_{\ell}(T)$ denotes the set of all $\ell$-dimensional subsimplices of $T$.
\end{subequations}
Following the argument in \cite[Section 2.4]{HuangWang2023}, we can show that the DoFs \eqref{vedof} are unisolvent for space $V^{\rm VE}(T)$.

Introduce the global virtual element space as
\begin{equation*}
V^{\rm VE}_h:=\{\boldsymbol{v}\in H^1_0(\Omega;\mathbb{R}^d):\boldsymbol{v}|_T\in V^{\rm VE}(T)~\textrm{for~each}~T\in\mathcal{T}_h\}.
\end{equation*}
Define a connection operator $E_h: V_h+H_0^1(\Omega;\mathbb{R}^d)\rightarrow V^{\rm VE}_h$ as follows: for any $\boldsymbol{v}\in V_h+H_0^1(\Omega;\mathbb{R}^d)$,  $E_h\boldsymbol{v}\in  V^{\rm VE}_h$ is determined by
\begin{align}
\notag	
(E_h\boldsymbol{v})(\texttt{v})&=\frac{1}{|\mathcal{T}_{\texttt{v}}|}\sum_{T\in\mathcal{T}_{\texttt{v}}}(Q_{2,T}\boldsymbol{v})(\texttt{v}), \qquad\quad\;\;\; \ \texttt{v}\in\mathring{\mathcal{V}}_h,\\
\notag
\int_{f}(E_h\boldsymbol{v})\cdot\boldsymbol{q} \ds&=\frac{1}{|\mathcal{T}_{f}|}\sum_{T\in\mathcal{T}_{f}}\int_{f}(Q_{2,T}\boldsymbol{v})\cdot\boldsymbol{q} \ds, \quad \ \boldsymbol{q}\in\mathbb{P}_{2d-\ell}(f;\mathbb R^d),  \\
\notag
&\qquad\qquad\qquad\qquad\qquad\qquad\qquad f\in\Delta_{\ell}(\mathring{\mathcal{T}}_h), \ell = 1,\ldots,d-2,\\
\label{connection-def0}
\int_{F}(E_h\boldsymbol{v})\cdot\boldsymbol{q} \ds&=\frac{1}{|\mathcal{T}_{F}|}\sum_{T\in\mathcal{T}_{F}}\int_{F}\boldsymbol{v}|_T\cdot\boldsymbol{q} \ds, \quad\;\;\;\; \ \boldsymbol{q}\in\mathbb{P}_{d+1}(F;\mathbb R^{d}), F\in\mathring{\mathcal{F}}_h,\\
\notag
\int_{T}(E_h\boldsymbol{v})\cdot\boldsymbol{q} \dx&=\int_{T}\boldsymbol{v}\cdot\boldsymbol{q} \dx, \quad \qquad\qquad\qquad\;\;\ \boldsymbol{q}\in\mathbb{P}_0(T;\mathbb{R}^d)\oplus \mathbb{G}_{2d-1}^{\oplus}(T),
\end{align}
where $\mathcal{T}_f$ is the set of all simplices in $\mathcal{T}_h$ sharing the common $f$, and $\Delta_{\ell}(\mathring{\mathcal{T}}_h)$ denotes the set of all interior $\ell$-dimensional subsimplices of $\mathcal{T}_h$.
By $V_h+H_0^1(\Omega;\mathbb{R}^d)\subseteq H_0(\div,\Omega)$,
equation \eqref{connection-def0} indicates
\begin{equation*}
\int_{F}(E_h\boldsymbol{v})\cdot\boldsymbol{n}\,q \ds= \int_{F}\boldsymbol{v}\cdot\boldsymbol{n}\,q \ds,  \quad \ q\in\mathbb{P}_{d+1}(F), F\in\mathring{\mathcal{F}}_h.
\end{equation*}
This together with the integration by parts yields
\begin{equation}
\label{connection-change}
\div(E_h\boldsymbol{v})=Q_{1,h}(\div\boldsymbol{v}) \quad \forall \ \boldsymbol{v}\in V_h+H_0^1(\Omega;\mathbb{R}^d).
\end{equation}
Applying arguments similar to those used in the proofs of Lemmas 5.1--5.3 in \cite{HuangWang2023}, together with the weak continuity \eqref{weak-continuity-vh1} of $ V_h$, we obtain
\begin{align}
\label{connection-Vh1}
|\boldsymbol{v}-E_h\boldsymbol{v}|_{m,h}\lesssim h^{s-m}|\boldsymbol{v}|_{s,h}\quad \forall \ \boldsymbol{v}\in V_h+H^2_0(\Omega;\mathbb{R}^d), 0\leq m\leq 1\leq s\leq 2.
\end{align}
By \eqref{connection-change},
\begin{align}
\label{connection-Vh2}
\|\div(\boldsymbol{v}-E_h\boldsymbol{v})\|_{0}\lesssim h^{s}|\div\boldsymbol{v}|_{s,h}\quad \forall \ \boldsymbol{v}\in V_h+H^2_0(\Omega;\mathbb{R}^d), 0\leq s\leq 1.
\end{align}
\begin{lemma}
\label{interpolation-I-lemma7}
Let $\boldsymbol{u}\in H_0^2(\Omega;\mathbb{R}^d)\cap H^3(\Omega;\mathbb{R}^d)$ be the solution of problem \eqref{SGE0}, and the solution $\boldsymbol{u}_0\in H^1_0(\Omega;\mathbb{R}^d)$ of problem \eqref{SGElinear} satisfy the regularity \eqref{elasregularity} with $2\leq s\leq 3$. We have for any $\boldsymbol{v}_h\in V_h$ that
\begin{align}
\label{Ih-error4}
&\quad\;\iota^2a_h(\boldsymbol{u}-I_h\boldsymbol{u},\boldsymbol{v}_h)+b_h(\boldsymbol{u}-I_h\boldsymbol{u},\boldsymbol{v}_h) \\
\notag
&\qquad\qquad\qquad\qquad\qquad\lesssim \iota^{-1/2}h\|\boldsymbol{f}\|_0(\|\boldsymbol{\varepsilon}_h(\boldsymbol{v}_h)\|_{0}+\iota\interleave \boldsymbol{\varepsilon}_h(\boldsymbol{v}_h)\interleave_{1,h}), \\
\label{Ih-error5}
&\quad\;\iota^2a_h(\boldsymbol{u}-I_h\boldsymbol{u},\boldsymbol{v}_h)+b_h(\boldsymbol{u}-I_h\boldsymbol{u},\boldsymbol{v}_h) \\
\notag
&\qquad\qquad\qquad\qquad\qquad\lesssim(\iota^{1/2}\|\boldsymbol{f}\|_0+h^{s-1}\|\boldsymbol{f}\|_{s-2})\|\boldsymbol{\varepsilon}_h(\boldsymbol{v}_h)\|_0.
\end{align}
\end{lemma}
\begin{proof}
Employing the Cauchy--Schwarz inequality, the inverse inequality, \eqref{Ih-error1}--\eqref{Ih-error11},  \eqref{Jh-error1}--\eqref{Jh-error11} and \eqref{interpolation-I-lemma2-pf},
\begin{align*}
a_h(\boldsymbol{u}-I_h\boldsymbol{u},\boldsymbol{v}_h)
&\lesssim \min\{h|\boldsymbol{\sigma}(\boldsymbol{u})|_2\interleave\boldsymbol{\varepsilon}_h(\boldsymbol{v}_h)\interleave_{1,h},|\boldsymbol{\sigma}(\boldsymbol{u})|_2\|\boldsymbol{\varepsilon}_h(\boldsymbol{v}_h)\|_{0}\}, 
\end{align*}
and
\begin{align*}
&b_h(\boldsymbol{u}-I_h\boldsymbol{u},\boldsymbol{v}_h)\leq\|\boldsymbol{\sigma}_h(\boldsymbol{u}-I_h\boldsymbol{u})\|_0\|\boldsymbol{\varepsilon}_h(\boldsymbol{v}_h)\|_0\\
\lesssim&\min\{h|\boldsymbol{\sigma}(\boldsymbol{u})|_1,\|\boldsymbol{\sigma}(\boldsymbol{u}-\boldsymbol{u}_0)\|_{0}+h^{s-1}|\boldsymbol{u}_0|_s+\lambda h^{s-1}|\div\boldsymbol{u}_0|_{s-1}\}\|\boldsymbol{\varepsilon}_h(\boldsymbol{v}_h)\|_0.
\end{align*}
Then estimates \eqref{Ih-error4}--\eqref{Ih-error5} follow from \eqref{elasregularity}--\eqref{Regularity-divu}.
\end{proof}

\begin{lemma}
\label{consistency-lemma1}
Let $\boldsymbol{u}\in H_0^2(\Omega;\mathbb{R}^d)\cap H^3(\Omega;\mathbb{R}^d)$ be the solution of problem \eqref{SGE0}. We have for any $\boldsymbol{v}_h\in V_h$ that
\begin{align}
\label{consistency-error30}
-a_h(\boldsymbol{u},\boldsymbol{v}_h)-(\Delta\boldsymbol{\sigma}(\boldsymbol{u}),\boldsymbol{\varepsilon}(E_h\boldsymbol{v}_h))&\lesssim \iota^{-3/2}h\|\boldsymbol{f}\|_0\interleave\boldsymbol{\varepsilon}_h(\boldsymbol{v}_h)\interleave_{1,h}, \\
\label{consistency-error3}
-a_h(\boldsymbol{u},\boldsymbol{v}_h)-(\Delta\boldsymbol{\sigma}(\boldsymbol{u}),\boldsymbol{\varepsilon}(E_h\boldsymbol{v}_h))&\lesssim \iota^{-3/2}\|\boldsymbol{f}\|_0\|\boldsymbol{\varepsilon}_h(\boldsymbol{v}_h)\|_{0}.
\end{align}
\end{lemma}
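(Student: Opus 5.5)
Elementwise integration by parts will express $a_h(\boldsymbol{u},\boldsymbol{v}_h)$ in terms of $-(\Delta\boldsymbol{\sigma}(\boldsymbol{u}),\boldsymbol{\varepsilon}_h(\boldsymbol{v}_h))$ plus face terms. The remaining error is then handled through the connection operator $E_h$.

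\textbf{Step 1: integration by parts.} Since $\boldsymbol{u}\in H_0^2\cap H^3$, the strain $\boldsymbol{\varepsilon}(\boldsymbol{u})$ and the divergence $\div\boldsymbol{u}$ belong to $H^1(\Omega)$ and vanish on $\partial\Omega$ (because $\nabla\boldsymbol{u}=0$ there). Hence $[\![\boldsymbol{\sigma}(\boldsymbol{u})]\!]=0$ on every face, and the corresponding consistency and penalty terms drop out. On each $T$ we get
\[
(\nabla\boldsymbol{\sigma}(\boldsymbol{u}),\nabla\boldsymbol{\varepsilon}(\boldsymbol{v}_h))_T=-(\Delta\boldsymbol{\sigma}(\boldsymbol{u}),\boldsymbol{\varepsilon}(\boldsymbol{v}_h))_T+(\partial_n\boldsymbol{\sigma}(\boldsymbol{u}),\boldsymbol{\varepsilon}(\boldsymbol{v}_h))_{\partial T}.
\]
Summing over $T$, the boundary contributions assemble into $\sum_F(\partial_n\boldsymbol{\sigma}(\boldsymbol{u}),[\![\boldsymbol{\varepsilon}_h(\boldsymbol{v}_h)]\!])_F$. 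This cancels exactly the term $-\sum_F(\{\partial_n\boldsymbol{\sigma}(\boldsymbol{u})\},[\![\boldsymbol{\varepsilon}_h(\boldsymbol{v}_h)]\!])_F$ in $a_h$, which holds only in the weak sense with $\Delta\boldsymbol{\sigma}(\boldsymbol{u})\in L^2$ if $\boldsymbol{u}\in H^3$ alone. Therefore
\[
-a_h(\boldsymbol{u},\boldsymbol{v}_h)-(\Delta\boldsymbol{\sigma}(\boldsymbol{u}),\boldsymbol{\varepsilon}(E_h\boldsymbol{v}_h))=(\Delta\boldsymbol{\sigma}(\boldsymbol{u}),\boldsymbol{\varepsilon}_h(\boldsymbol{v}_h-E_h\boldsymbol{v}_h)).
\]
This identity is the key step; the rest is estimation.

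\textbf{Step 2: both bounds by Cauchy--Schwarz.} Cauchy--Schwarz gives the bound $\|\Delta\boldsymbol{\sigma}(\boldsymbol{u})\|_0\,|\boldsymbol{v}_h-E_h\boldsymbol{v}_h|_{1,h}$, and the two estimates differ only in how the second factor is treated.

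For \eqref{consistency-error30}, apply \eqref{connection-Vh1} with $m=1$, $s=2$ to get $|\boldsymbol{v}_h-E_h\boldsymbol{v}_h|_{1,h}\lesssim h|\boldsymbol{v}_h|_{2,h}$. Then the discrete $H^2$-Korn inequality \eqref{discreteKornH2} gives $|\boldsymbol{v}_h|_{2,h}\lesssim\interleave\boldsymbol{\varepsilon}_h(\boldsymbol{v}_h)\interleave_{1,h}$.

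For \eqref{consistency-error3}, apply \eqref{connection-Vh1} with $m=s=1$ to get $|\boldsymbol{v}_h-E_h\boldsymbol{v}_h|_{1,h}\lesssim|\boldsymbol{v}_h|_{1,h}$. Then the discrete Korn inequality \eqref{discreteKorn} gives $|\boldsymbol{v}_h|_{1,h}\lesssim\|\boldsymbol{\varepsilon}_h(\boldsymbol{v}_h)\|_0$.

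\textbf{Step 3: bounding $\|\Delta\boldsymbol{\sigma}(\boldsymbol{u})\|_0$.} It remains to show $\|\Delta\boldsymbol{\sigma}(\boldsymbol{u})\|_0\lesssim\iota^{-3/2}\|\boldsymbol{f}\|_0$ uniformly in $\lambda$. We have
\[
\|\Delta\boldsymbol{\sigma}(\boldsymbol{u})\|_0\lesssim|\boldsymbol{u}|_3+\lambda|\div\boldsymbol{u}|_2.
\]
By \eqref{Regularity-u}, $\iota^2\|\boldsymbol{u}\|_3\lesssim\iota^{1/2}\|\boldsymbol{f}\|_0$. By \eqref{Regularity-divu}, $\lambda\iota^2\|\div\boldsymbol{u}\|_2\lesssim\iota^{1/2}\|\boldsymbol{f}\|_0$. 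Together these give the required bound.

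The main obstacle is rigor in Step 1. The face term is well defined when $\boldsymbol{u}$ is smoother, since then $\partial_n\boldsymbol{\sigma}(\boldsymbol{u})$ has traces. For $\boldsymbol{u}\in H^3$ only, I would argue by density, or treat $\partial_n\boldsymbol{\sigma}(\boldsymbol{u})$ through its $L^2$ traces, which exist because $\boldsymbol{\sigma}(\boldsymbol{u})\in H^2$.
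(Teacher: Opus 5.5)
Your proposal is correct and follows the paper's proof. You derive the same integration-by-parts identity $-a_h(\boldsymbol u,\boldsymbol v_h)-(\Delta\boldsymbol\sigma(\boldsymbol u),\boldsymbol\varepsilon(E_h\boldsymbol v_h))=(\Delta\boldsymbol\sigma(\boldsymbol u),\boldsymbol\varepsilon_h(\boldsymbol v_h-E_h\boldsymbol v_h))$ and then bound it with \eqref{connection-Vh1}, the discrete Korn inequalities, and \eqref{Regularity-u}--\eqref{Regularity-divu}. The trace issue you flag as the main obstacle is not one: $\boldsymbol\sigma(\boldsymbol u)\in H^2(\Omega)$ already gives single-valued $L^2$ face traces of $\partial_n\boldsymbol\sigma(\boldsymbol u)$, so no density argument is needed.
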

\begin{proof}
Applying the integration by parts, we have
\begin{align*}
-a_h(\boldsymbol{u},\boldsymbol{v}_h)-(\Delta\boldsymbol{\sigma}(\boldsymbol{u}),\boldsymbol{\varepsilon}(E_h\boldsymbol{v}_h))
=(\Delta\boldsymbol{\sigma}(\boldsymbol{u}),\boldsymbol{\varepsilon}_h(\boldsymbol{v}_h-E_h\boldsymbol{v}_h)).
\end{align*}
Hence, the estimates \eqref{consistency-error30}--\eqref{consistency-error3} follow from \eqref{connection-Vh1}, the regularity \eqref{Regularity-u}--\eqref{Regularity-divu} and the discrete Korn's inequalities \eqref{discreteKorn}--\eqref{discreteKornH2} immediately.
\end{proof}

\begin{lemma}
\label{consistency-lemma2}
Let $\boldsymbol{u}\in H_0^2(\Omega;\mathbb{R}^d)\cap H^3(\Omega;\mathbb{R}^d)$ be the solution of problem \eqref{SGE0}, and the solution $\boldsymbol{u}_0\in H^1_0(\Omega;\mathbb{R}^d)$ of problem \eqref{SGElinear} satisfy the regularity \eqref{elasregularity} with $2\leq s\leq 3$. We have for any $\boldsymbol{v}_h\in V_h$ that
\begin{align}
\label{main-proof-uh-20}
&(\boldsymbol{f},\boldsymbol{v}_h)-\iota^2 a_h(\boldsymbol{u},\boldsymbol{v}_h)-b_h(\boldsymbol{u},\boldsymbol{v}_h) \!\lesssim h\|\boldsymbol{f}\|_0(\|\boldsymbol{\varepsilon}_h(\boldsymbol{v}_h)\|_{0}+\iota^{1/2}\interleave \boldsymbol{\varepsilon}_h(\boldsymbol{v}_h)\interleave_{1,h}), \\
\label{main-proof-uh-2}
&(\boldsymbol{f},\boldsymbol{v}_h)-\iota^2 a_h(\boldsymbol{u},\boldsymbol{v}_h)-b_h(\boldsymbol{u},\boldsymbol{v}_h) \lesssim (\iota^{1/2}\|\boldsymbol{f}\|_0+h^{s-1}\|\boldsymbol{f}\|_{s-2})\|\boldsymbol{\varepsilon}_h(\boldsymbol{v}_h)\|_{0}.
\end{align}
\end{lemma}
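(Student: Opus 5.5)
The plan is to insert the conforming connection $E_h\boldsymbol{v}_h\in V^{\rm VE}_h\subset H_0^1(\Omega;\mathbb R^d)$ and use the strong form of \eqref{SGE0}. Since $\boldsymbol{u}\in H^3$ solves $-\div(\boldsymbol{\sigma}(\boldsymbol{u})-\iota^2\Delta\boldsymbol{\sigma}(\boldsymbol{u}))=\boldsymbol{f}$, testing with $E_h\boldsymbol{v}_h$ and integrating by parts gives
\[
(\boldsymbol{f},E_h\boldsymbol{v}_h)=(\boldsymbol{\sigma}(\boldsymbol{u}),\boldsymbol{\varepsilon}(E_h\boldsymbol{v}_h))-\iota^2(\Delta\boldsymbol{\sigma}(\boldsymbol{u}),\boldsymbol{\varepsilon}(E_h\boldsymbol{v}_h)).
\]
Hence the consistency functional splits as
\begin{align*}
&(\boldsymbol{f},\boldsymbol{v}_h)-\iota^2 a_h(\boldsymbol{u},\boldsymbol{v}_h)-b_h(\boldsymbol{u},\boldsymbol{v}_h)
=(\boldsymbol{f},\boldsymbol{v}_h-E_h\boldsymbol{v}_h)\\
&\qquad+\iota^2\big(-a_h(\boldsymbol{u},\boldsymbol{v}_h)-(\Delta\boldsymbol{\sigma}(\boldsymbol{u}),\boldsymbol{\varepsilon}(E_h\boldsymbol{v}_h))\big)
-(\boldsymbol{\sigma}(\boldsymbol{u}),\boldsymbol{\varepsilon}_h(\boldsymbol{v}_h-E_h\boldsymbol{v}_h)).
\end{align*}
The middle term is exactly the one controlled in Lemma~\ref{consistency-lemma1}. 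For the first term, \eqref{connection-Vh1} with $m=0$ and the discrete Korn inequalities \eqref{discreteKorn}--\eqref{discreteKornH2} give
\[
(\boldsymbol{f},\boldsymbol{v}_h-E_h\boldsymbol{v}_h)\lesssim\|\boldsymbol{f}\|_0\min\{h\|\boldsymbol{\varepsilon}_h(\boldsymbol{v}_h)\|_0,\,h^2\interleave\boldsymbol{\varepsilon}_h(\boldsymbol{v}_h)\interleave_{1,h}\}.
\]

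The last term requires more care, because $\boldsymbol{\sigma}(\boldsymbol{u})$ contains $\lambda\div\boldsymbol{u}$. I would split $\boldsymbol{\sigma}(\boldsymbol{u})=2\mu\boldsymbol{\varepsilon}(\boldsymbol{u})+\lambda\div\boldsymbol{u}\,\boldsymbol{I}$. For the pressure-like part, \eqref{connection-change} gives $\div(\boldsymbol{v}_h-E_h\boldsymbol{v}_h)=0$, since $\div\boldsymbol{v}_h\in\mathbb P_1(\mathcal T_h)$. So $(\lambda\div\boldsymbol{u},\div(\boldsymbol{v}_h-E_h\boldsymbol{v}_h))=0$, and no $\lambda$-dependence survives. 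For the remaining part, the weak continuity \eqref{weak-continuity-vh1} and the local moment conditions of $E_h$ allow subtracting a piecewise constant: $\boldsymbol{v}_h-E_h\boldsymbol{v}_h$ has vanishing face moments against $\mathbb P_1$ on interior faces, up to the averaging of $\boldsymbol{v}_h$. I would therefore write $(\boldsymbol{\varepsilon}(\boldsymbol{u}),\boldsymbol{\varepsilon}_h(\boldsymbol{v}_h-E_h\boldsymbol{v}_h))=(\boldsymbol{\varepsilon}(\boldsymbol{u})-Q_{0,h}\boldsymbol{\varepsilon}(\boldsymbol{u}),\boldsymbol{\varepsilon}_h(\boldsymbol{v}_h-E_h\boldsymbol{v}_h))+(Q_{0,h}\boldsymbol{\varepsilon}(\boldsymbol{u}),\cdot)$. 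The second piece reduces, elementwise by parts, to face terms that vanish by the face moments. This bounds the term by $h|\boldsymbol{u}|_2\,|\boldsymbol{v}_h-E_h\boldsymbol{v}_h|_{1,h}$. Using \eqref{connection-Vh1} with $m=1$ and either $s=1$ or $s=2$, together with the discrete Korn inequalities, gives either $h|\boldsymbol{u}|_2\|\boldsymbol{\varepsilon}_h(\boldsymbol{v}_h)\|_0$ or $h^2|\boldsymbol{u}|_2\interleave\boldsymbol{\varepsilon}_h(\boldsymbol{v}_h)\interleave_{1,h}$. Alternatively, one can bound it crudely by $|\boldsymbol{u}|_1\|\boldsymbol{\varepsilon}_h(\boldsymbol{v}_h)\|_0$ and split $\boldsymbol{u}=(\boldsymbol{u}-\boldsymbol{u}_0)+\boldsymbol{u}_0$.

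It remains to combine these bounds with the regularity estimates.

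\emph{Proof of \eqref{main-proof-uh-20}.} Take \eqref{consistency-error30} times $\iota^2$, which gives $\iota^{1/2}h\|\boldsymbol{f}\|_0\interleave\boldsymbol{\varepsilon}_h(\boldsymbol{v}_h)\interleave_{1,h}$. For the $\sigma$ term use $h|\boldsymbol{u}|_2\|\boldsymbol{\varepsilon}_h(\boldsymbol{v}_h)\|_0$, where $|\boldsymbol{u}|_2\lesssim|\boldsymbol{u}_0|_2+|\boldsymbol{u}-\boldsymbol{u}_0|_2$. This is where care is needed: \eqref{Regularity-u} only gives $\iota|\boldsymbol{u}|_2\lesssim\iota^{1/2}\|\boldsymbol{f}\|_0$. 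So I would instead apply $(\boldsymbol{u}-\boldsymbol{u}_0)$ with the $s=1$ branch, or interpolate between the $m=1,s=1$ and $s=2$ branches. Matching the claimed powers $h$ and $\iota^{1/2}$ is the main obstacle. I would handle it by using $\|\boldsymbol{u}\|_2\lesssim\iota^{-1/2}\|\boldsymbol{f}\|_0$ against $h^2\interleave\cdot\interleave_{1,h}$, and splitting $h^2\iota^{-1/2}\le h\cdot\ldots$ according to whether $h\le\iota$.

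\emph{Proof of \eqref{main-proof-uh-2}.} Use \eqref{consistency-error3} times $\iota^2$, giving $\iota^{1/2}\|\boldsymbol{f}\|_0$. Bound the $\sigma$ term by $\|\boldsymbol{\varepsilon}(\boldsymbol{u}-\boldsymbol{u}_0)\|_0\lesssim\iota^{1/2}\|\boldsymbol{f}\|_0$ plus $h^{s-1}|\boldsymbol{u}_0|_s$, the latter via the $Q_{0,h}$/$Q_{s-2}$ trick and \eqref{elasregularity}. The $\boldsymbol{f}$ term is bounded by $h\|\boldsymbol{f}\|_0$, which is at most $h^{s-1}\|\boldsymbol{f}\|_{s-2}$ when $s=2$. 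For $s=3$, I would use $(\boldsymbol{f}-Q_{0,h}\boldsymbol{f},\cdot)$ together with the DoF \eqref{vedof3}.
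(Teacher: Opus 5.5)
Your splitting with $\boldsymbol w_h=\boldsymbol v_h-E_h\boldsymbol v_h$ is the paper's, and your argument for \eqref{main-proof-uh-2} is sound. By \eqref{connection-change}, $\div\boldsymbol w_h=0$, which removes the $\lambda$-part. Next, $\nabla_h\boldsymbol w_h$ is $L^2$-orthogonal to $\mathbb P_1(\mathcal T_h;\mathbb R^{d\times d})$: the $\mathbb P_1$ face moments of $\boldsymbol w_h|_T$ vanish on every face, boundary faces included, by \eqref{weak-continuity-vh1} and \eqref{connection-def0}, and its $\mathbb P_0$ element moments vanish. Finally, $(\boldsymbol f,\boldsymbol w_h)=(\boldsymbol f-Q_{0,h}\boldsymbol f,\boldsymbol w_h)$ covers $s=3$.

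The gap is in \eqref{main-proof-uh-20}, at the step you yourself call the main obstacle. The resolution you propose does not close it. Because of the boundary layer, $|\boldsymbol u|_2$ is only $\lesssim\iota^{-1/2}\|\boldsymbol f\|_0$. Your case split therefore works only for $h\le\iota$, where $h^2\iota^{-1/2}\le h\iota^{1/2}$. For $h>\iota$, none of your candidate bounds for $2\mu(\boldsymbol\varepsilon(\boldsymbol u),\boldsymbol\varepsilon_h(\boldsymbol w_h))$ has the required form:
\begin{itemize}
\item $h|\boldsymbol u|_2|\boldsymbol v_h|_{1,h}$ and $h^2|\boldsymbol u|_2|\boldsymbol v_h|_{2,h}$ carry the factor $\iota^{-1/2}$;
\item $|\boldsymbol u|_1|\boldsymbol v_h|_{1,h}$ has no factor $h$;
\item $h|\boldsymbol u|_1|\boldsymbol v_h|_{2,h}$ lacks the factor $\iota^{1/2}$ in front of $\interleave\boldsymbol\varepsilon_h(\boldsymbol v_h)\interleave_{1,h}$.
\end{itemize}
Your other suggestion, applying the $s=1$ branch of \eqref{connection-Vh1} to $\boldsymbol u-\boldsymbol u_0$, gives $\iota^{1/2}\|\boldsymbol f\|_0\|\boldsymbol\varepsilon_h(\boldsymbol v_h)\|_0$. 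This again has no power of $h$.

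What is needed is the split $\boldsymbol u=\boldsymbol u_0+(\boldsymbol u-\boldsymbol u_0)$, with the branches assigned the other way round.
\begin{itemize}
\item For $\boldsymbol u_0$, use the $Q_{0,h}$-orthogonality and the $s=1$ branch. This gives $h|\boldsymbol u_0|_2|\boldsymbol v_h|_{1,h}\lesssim h\|\boldsymbol f\|_0\|\boldsymbol\varepsilon_h(\boldsymbol v_h)\|_0$ by \eqref{elasregularity} with $s=2$ and \eqref{discreteKorn}.
\item For $\boldsymbol u-\boldsymbol u_0$, use plain Cauchy--Schwarz and the $s=2$ branch. This gives $|\boldsymbol u-\boldsymbol u_0|_1\,h|\boldsymbol v_h|_{2,h}\lesssim\iota^{1/2}h\|\boldsymbol f\|_0\interleave\boldsymbol\varepsilon_h(\boldsymbol v_h)\interleave_{1,h}$ by \eqref{Regularity-u} and \eqref{discreteKornH2}.
\end{itemize}
No case distinction between $h$ and $\iota$ is then required.

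The paper builds this split in from the start. It writes $\boldsymbol f=-\div\boldsymbol\sigma(\boldsymbol u_0)$ and integrates by parts elementwise, obtaining
\[
(\boldsymbol f,\boldsymbol w_h)-b_h(\boldsymbol u,\boldsymbol w_h)=(\boldsymbol\sigma(\boldsymbol u_0-\boldsymbol u),\boldsymbol\varepsilon_h(\boldsymbol w_h))-\sum_{F}(\boldsymbol\sigma(\boldsymbol u_0)\boldsymbol n,[\![\boldsymbol v_h]\!])_F .
\]
The first term is your $\boldsymbol u-\boldsymbol u_0$ piece. The face term equals your $\boldsymbol u_0$ piece minus $(\boldsymbol f,\boldsymbol w_h)$, and the paper handles it by weak continuity and $Q_{1,F}$. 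This yields $h^{s-1}\|\boldsymbol f\|_{s-2}|\boldsymbol v_h|_{1,h}$ for $s=2,3$ at once, with no separate estimate of $(\boldsymbol f,\boldsymbol w_h)$. Once the split is made, your route is equally valid: it bounds $(\boldsymbol f,\boldsymbol w_h)$ directly and removes the $\lambda$-part via $\div\boldsymbol w_h=0$.
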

\begin{proof}
Let $\boldsymbol{w}_h=\boldsymbol{v}_h-E_h\boldsymbol{v}_h$ for simplicity. 
By applying the integration by parts to problem \eqref{SGElinear}, we have
\begin{equation}\label{eq:202408081}
(\boldsymbol{f}, \boldsymbol{w}_h)-b_h(\boldsymbol{u},\boldsymbol{w}_h)=(\boldsymbol{\sigma}(\boldsymbol{u}_0-\boldsymbol{u}),\boldsymbol{\varepsilon}_h(\boldsymbol{w}_h))-\sum_{F\in\mathcal{F}_h}(\boldsymbol{\sigma}(\boldsymbol{u}_0)\boldsymbol{n},[\![\boldsymbol{v}_h]\!])_F.
\end{equation}
Adopting \eqref{connection-Vh1}, it holds
\begin{equation}\label{eq:202408082}
(\boldsymbol{\sigma}(\boldsymbol{u}_0-\boldsymbol{u}),\boldsymbol{\varepsilon}_h(\boldsymbol{w}_h))
\lesssim \|\boldsymbol{\sigma}(\boldsymbol{u}_0-\boldsymbol{u})\|_0\min\{|\boldsymbol{v}_h|_{1,h}, h|\boldsymbol{v}_h|_{2,h}\}.
\end{equation}
Thanks to the weak continuity \eqref{weak-continuity-vh1} of $V_{h}$ and $V_{h}\subset H_0(\div,\Omega)$, we obtain
\begin{align*}
-\sum_{F\in\mathcal{F}_h}(\boldsymbol{\sigma}(\boldsymbol{u}_0)\boldsymbol{n},[\![\boldsymbol{v}_h]\!])_F&=-2\mu\sum_{F\in\mathcal{F}_h}(\boldsymbol{\varepsilon}(\boldsymbol{u}_0)\boldsymbol{n},[\![\boldsymbol{v}_h]\!])_F\\
&=-2\mu\sum_{F\in\mathcal{F}_h}(\boldsymbol{\varepsilon}(\boldsymbol{u}_0)\boldsymbol{n}-Q_{1,F}(\boldsymbol{\varepsilon}(\boldsymbol{u}_0)\boldsymbol{n}),[\![\boldsymbol{v}_h]\!]-Q_{1,F}[\![\boldsymbol{v}_h]\!])_F.
\end{align*}
From the error estimate of $Q_{1,F}$, the trace inequality and \eqref{elasregularity}, we acquire
\begin{equation}
\label{consistency-error4}
-\sum_{F\in\mathcal{F}_h}(\boldsymbol{\sigma}(\boldsymbol{u}_0)\boldsymbol{n},[\![\boldsymbol{v}_h]\!])_F\lesssim h^{s-1}\|\boldsymbol{f}\|_{s-2}|\boldsymbol{v}_h|_{1,h},
\end{equation}
which together with  \eqref{eq:202408081}--\eqref{eq:202408082} and the regularity \eqref{Regularity-u}--\eqref{Regularity-divu} yields
\begin{align}
\label{consistency-error5}
(\boldsymbol{f},\boldsymbol{w}_h)-b_h(\boldsymbol{u},\boldsymbol{w}_h)&\lesssim \iota^{1/2}\|\boldsymbol{f}\|_0\min\{|\boldsymbol{v}_h|_{1,h}, h|\boldsymbol{v}_h|_{2,h}\} \\
\notag
&\quad+h^{s-1}\|\boldsymbol{f}\|_{s-2}|\boldsymbol{v}_h|_{1,h}.
\end{align}

On the other hand, apply the integration by parts to problem \eqref{SGE0} to get
\begin{align*}
(\boldsymbol{f},E_h\boldsymbol{v}_h)=-\iota^2(\Delta\boldsymbol{\sigma}(\boldsymbol{u}),\boldsymbol{\varepsilon}(E_h\boldsymbol{v}_h))+(\boldsymbol{\sigma}(\boldsymbol{u}),\boldsymbol{\varepsilon}(E_h\boldsymbol{v}_h)).
\end{align*}
Thus, we have
\begin{align*}
(\boldsymbol{f},\boldsymbol{v}_h)-\iota^2a_h(\boldsymbol{u},\boldsymbol{v}_h)-b_h(\boldsymbol{u},\boldsymbol{v}_h)
&=(\boldsymbol{f},\boldsymbol{w}_h)-b_h(\boldsymbol{u},\boldsymbol{w}_h) \\
&\quad\;-\iota^2\big(a_h(\boldsymbol{u},\boldsymbol{v}_h)+(\Delta\boldsymbol{\sigma}(\boldsymbol{u}),\boldsymbol{\varepsilon}(E_h\boldsymbol{v}_h))\big).
\end{align*}
Combining \eqref{consistency-error30}--\eqref{consistency-error3}, \eqref{consistency-error5}, and the discrete Korn's inequalities~\eqref{discreteKorn}--\eqref{discreteKornH2} yields \eqref{main-proof-uh-20}--\eqref{main-proof-uh-2}.
\end{proof}

\begin{theorem}
\label{main-theorem-uh-1}
Let $\boldsymbol{u}\in H_0^2(\Omega;\mathbb{R}^d)\cap H^3(\Omega;\mathbb{R}^d)$ be the solution of problem \eqref{SGE0}, and the solution $\boldsymbol{u}_0\in H^1_0(\Omega;\mathbb{R}^d)$ of problem \eqref{SGElinear} satisfy the regularity \eqref{elasregularity} with $2\leq s\leq 3$. Then
we have
\begin{align}
\label{main-result-uh-1}
\interleave\boldsymbol{u}-\boldsymbol{u}_h\interleave_{\iota,\lambda,h} &\lesssim \min\{\iota^{-1/2}h\|\boldsymbol{f}\|_0, \iota^{1/2}\|\boldsymbol{f}\|_0+h^{s-1}\|\boldsymbol{f}\|_{s-2}\}, \\
\label{main-result-uh-2}
\|\boldsymbol{u}_0-\boldsymbol{u}_{h}\|_{\iota,\lambda,h}&\lesssim \iota^{1/2}\|\boldsymbol{f}\|_0+h^{s-1}\|\boldsymbol{f}\|_{s-2}. 
\end{align}
\end{theorem}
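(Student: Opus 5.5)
The plan is the standard Strang-type splitting $\boldsymbol{u}-\boldsymbol{u}_h=(\boldsymbol{u}-I_h\boldsymbol{u})+(I_h\boldsymbol{u}-\boldsymbol{u}_h)$. The first piece is already controlled by Lemma~\ref{interpolation-I-lemma2}. Combining \eqref{Ih-error2} with the regularity \eqref{Regularity-u}--\eqref{Regularity-divu} gives
\[
\iota h(|\boldsymbol{u}|_3+\sqrt\lambda|\div\boldsymbol{u}|_2)+h(|\boldsymbol{u}|_2+\sqrt\lambda|\div\boldsymbol{u}|_1)\lesssim \iota^{-1/2}h\|\boldsymbol{f}\|_0 .
\]
Here $\sqrt\lambda\le\lambda/\sqrt{\lambda_0}$, and $\iota\le1$ absorbs the lower-order terms. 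Estimate \eqref{Ih-error3} supplies the other branch of the minimum.

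For the discrete part set $\boldsymbol{e}_h:=I_h\boldsymbol{u}-\boldsymbol{u}_h\in V_h$. By the coercivity \eqref{eq:abhelliptic} of Theorem~\ref{wellposed-lemma} and the scheme \eqref{IPDG},
\[
\interleave\boldsymbol{e}_h\interleave_{\iota,\lambda,h}^2\eqsim \big[\iota^2a_h(I_h\boldsymbol{u}-\boldsymbol{u},\boldsymbol{e}_h)+b_h(I_h\boldsymbol{u}-\boldsymbol{u},\boldsymbol{e}_h)\big]+\big[\iota^2a_h(\boldsymbol{u},\boldsymbol{e}_h)+b_h(\boldsymbol{u},\boldsymbol{e}_h)-(\boldsymbol{f},\boldsymbol{e}_h)\big].
\]
The first bracket is bounded by Lemma~\ref{interpolation-I-lemma7} and the second, the consistency error, by Lemma~\ref{consistency-lemma2}. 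For the $\iota^{-1/2}h$ branch I use \eqref{Ih-error4} and \eqref{main-proof-uh-20}. The resulting factors $\|\boldsymbol{\varepsilon}_h(\boldsymbol{e}_h)\|_0+\iota\interleave\boldsymbol{\varepsilon}_h(\boldsymbol{e}_h)\interleave_{1,h}$ and $\|\boldsymbol{\varepsilon}_h(\boldsymbol{e}_h)\|_0+\iota^{1/2}\interleave\boldsymbol{\varepsilon}_h(\boldsymbol{e}_h)\interleave_{1,h}$ must then be bounded by $\interleave\boldsymbol{e}_h\interleave_{\iota,\lambda,h}$. For the second factor this produces $h\|\boldsymbol{f}\|_0(1+\iota^{-1/2})\interleave\boldsymbol{e}_h\interleave_{\iota,\lambda,h}$, which is acceptable since $\iota\le1$. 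For the other branch I use \eqref{Ih-error5} and \eqref{main-proof-uh-2}, both of which involve only $\|\boldsymbol{\varepsilon}_h(\boldsymbol{e}_h)\|_0\lesssim\interleave\boldsymbol{e}_h\interleave_{\iota,\lambda,h}$. Dividing through and applying the triangle inequality gives \eqref{main-result-uh-1}.

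For \eqref{main-result-uh-2} I write $\boldsymbol{u}_0-\boldsymbol{u}_h=(\boldsymbol{u}_0-\boldsymbol{u})+(\boldsymbol{u}-\boldsymbol{u}_h)$, noting that $\|\cdot\|_{\iota,\lambda,h}\le\interleave\cdot\interleave_{\iota,\lambda,h}$. The second part follows from the second branch of \eqref{main-result-uh-1}. For the first part, \eqref{Regularity-u}--\eqref{Regularity-divu} give
\[
\|\boldsymbol{\varepsilon}(\boldsymbol{u}-\boldsymbol{u}_0)\|_0+\sqrt\lambda\|\div(\boldsymbol{u}-\boldsymbol{u}_0)\|_0\lesssim\iota^{1/2}\|\boldsymbol{f}\|_0 .
\]
The gradient terms $\iota^2(|\boldsymbol{\varepsilon}(\boldsymbol{u}-\boldsymbol{u}_0)|_1^2+\lambda|\div(\boldsymbol{u}-\boldsymbol{u}_0)|_1^2)$ are bounded via $\iota\|\boldsymbol{u}\|_2+\lambda\iota|\div\boldsymbol{u}|_1\lesssim\iota^{1/2}\|\boldsymbol{f}\|_0$ together with $\iota(|\boldsymbol{u}_0|_2+\lambda|\div\boldsymbol{u}_0|_1)\lesssim\iota\|\boldsymbol{f}\|_0$ from \eqref{elasregularity} with $s=2$, which holds since $\|\boldsymbol{f}\|_{s-2}$ controls $\|\boldsymbol{f}\|_0$ for $s\geq2$.

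The main obstacle is the powers of $\iota$ in the first branch: checking that the mixed factors produced by \eqref{main-proof-uh-20} and \eqref{Ih-error4} are dominated by $\iota^{-1/2}h\|\boldsymbol{f}\|_0\interleave\boldsymbol{e}_h\interleave_{\iota,\lambda,h}$ rather than by a worse power. Because $\iota\le1$ and the norm contains $\iota\interleave\boldsymbol{\varepsilon}_h\interleave_{1,h}$, the term $\iota^{1/2}\interleave\boldsymbol{\varepsilon}_h(\boldsymbol{e}_h)\interleave_{1,h}$ equals $\iota^{-1/2}\cdot\iota\interleave\boldsymbol{\varepsilon}_h(\boldsymbol{e}_h)\interleave_{1,h}$, so the bound closes.
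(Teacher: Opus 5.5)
Your proposal is correct and is essentially the paper's own proof. It uses the same splitting through $I_h\boldsymbol{u}$, the coercivity \eqref{eq:abhelliptic} combined with \eqref{IPDG}, Lemmas~\ref{interpolation-I-lemma7} and~\ref{consistency-lemma2} for the two branches, \eqref{Ih-error2}--\eqref{Ih-error3} plus the regularity estimates for $\boldsymbol{u}-I_h\boldsymbol{u}$, and a triangle inequality through $\boldsymbol{u}$ for \eqref{main-result-uh-2}.

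One justification points the wrong way, though. The inequality $\|\boldsymbol{f}\|_0\le\|\boldsymbol{f}\|_{s-2}$ only turns \eqref{elasregularity} into $\iota(|\boldsymbol{u}_0|_2+\lambda|\div\boldsymbol{u}_0|_1)\lesssim\iota\|\boldsymbol{f}\|_{s-2}$, and for $s>2$ this is not dominated by $\iota^{1/2}\|\boldsymbol{f}\|_0+h^{s-1}\|\boldsymbol{f}\|_{s-2}$. So the $s=2$ estimate $\|\boldsymbol{u}_0\|_2+\lambda\|\div\boldsymbol{u}_0\|_1\lesssim\|\boldsymbol{f}\|_0$ must be supplied as a separate input. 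It holds in the convex setting behind \eqref{Regularity-u}--\eqref{Regularity-divu}, and the paper's one-line conclusion of \eqref{main-result-uh-2} tacitly uses it as well.
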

\begin{proof}
Let $\boldsymbol{w}_h=\boldsymbol{u}_h-I_h\boldsymbol{u}$ for simplicity. By \eqref{eq:abhelliptic} and \eqref{IPDG}, it holds
\begin{align*}
\interleave\boldsymbol{u}_h-I_h\boldsymbol{u}\interleave^2_{\iota,\lambda,h} &\lesssim\iota^2a_h(\boldsymbol{u}_h-I_h\boldsymbol{u},\boldsymbol{w}_h)+b_h(\boldsymbol{u}_h-I_h\boldsymbol{u},\boldsymbol{w}_h) \\
& = (\boldsymbol{f},\boldsymbol{w}_h)- \iota^2a_h(I_h\boldsymbol{u},\boldsymbol{w}_h)-b_h(I_h\boldsymbol{u},\boldsymbol{w}_h)\\
&=(\boldsymbol{f},\boldsymbol{w}_h)-\iota^2a_h(\boldsymbol{u},\boldsymbol{w}_h)-b_h(\boldsymbol{u},\boldsymbol{w}_h) \\
&\quad\;+\iota^2a_h(\boldsymbol{u}-I_h\boldsymbol{u},\boldsymbol{w}_h)+b_h(\boldsymbol{u}-I_h\boldsymbol{u},\boldsymbol{w}_h).
\end{align*}
Then we get from \eqref{Ih-error4}--\eqref{Ih-error5} and \eqref{main-proof-uh-20}--\eqref{main-proof-uh-2} that
\begin{equation*}
\interleave\boldsymbol{u}_h-I_h\boldsymbol{u}\interleave_{\iota,\lambda,h} \lesssim \min\{\iota^{-1/2}h\|\boldsymbol{f}\|_0, \iota^{1/2}\|\boldsymbol{f}\|_0+h^{s-1}\|\boldsymbol{f}\|_{s-2}\},
\end{equation*}
which together with \eqref{Regularity-u}--\eqref{Regularity-divu} and \eqref{Ih-error2}--\eqref{Ih-error3}
gives \eqref{main-result-uh-1}.

Finally, we conclude \eqref{main-result-uh-2} from \eqref{main-result-uh-1} and \eqref{elasregularity}--\eqref{Regularity-divu}.
\end{proof}

It follows from \eqref{main-result-uh-1} that $$\|\boldsymbol{\varepsilon}_h(\boldsymbol{u}-\boldsymbol{u}_h)\|_{0}+\sqrt{\lambda}\|\div(\boldsymbol{u}-\boldsymbol{u}_h)\|_0\lesssim \iota^{1/2}\|\boldsymbol{f}\|_0+h^{s-1}\|\boldsymbol{f}\|_{s-2}.$$
In the following, we consider the $H^1$ error estimate when $\iota\eqsim 1$.
\begin{theorem}
Suppose Assumptions \ref{assumption1}--\ref{assumption2} hold and $\iota\eqsim 1$. Let $\boldsymbol{u}\in H_0^2(\Omega;\mathbb{R}^d)\cap H^3(\Omega;\mathbb{R}^d)$ be the solution of problem \eqref{SGE0}, and the solution $\boldsymbol{u}_0\in H^1_0(\Omega;\mathbb{R}^d)$ of problem \eqref{SGElinear} satisfy the regularity \eqref{elasregularity} with $2\leq s\leq 3$. Then we have
\begin{equation}
\label{H1-result}
\|\boldsymbol{\varepsilon}_h(\boldsymbol{u}-\boldsymbol{u}_h)\|_{0}+\sqrt{\lambda}\|\div(\boldsymbol{u}-\boldsymbol{u}_h)\|_0\lesssim\iota^{-3/2}h^2\|\boldsymbol{f}\|_0.
\end{equation}
\end{theorem}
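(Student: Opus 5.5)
The plan is a duality (Aubin--Nitsche) argument in the energy-type norm $\|\boldsymbol{\varepsilon}_h(\cdot)\|_0+\sqrt{\lambda}\|\div\cdot\|_0$. Set $\boldsymbol{e}_h=\boldsymbol{u}-\boldsymbol{u}_h$ and $\boldsymbol{g}:=-\div_h\boldsymbol{\sigma}_h(\boldsymbol{e}_h)$. The difficulty is that this $\boldsymbol{g}$ is not a function. It is better to define the dual problem weakly.

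\textbf{Dual problem.} Let $\boldsymbol{z}\in H_0^2(\Omega;\mathbb R^d)$ solve
\[
\iota^2a(\boldsymbol{z},\boldsymbol{v})+b(\boldsymbol{z},\boldsymbol{v})=(\boldsymbol{\sigma}_h(\boldsymbol{e}_h),\boldsymbol{\varepsilon}(\boldsymbol{v}))\qquad\forall\,\boldsymbol{v}\in H_0^2(\Omega;\mathbb R^d).
\]
The right-hand side is $\boldsymbol{g}\in H^{-1}$ with
\[
\|\boldsymbol{g}\|_{-1}\lesssim\|\boldsymbol{\sigma}_h(\boldsymbol{e}_h)\|_0\lesssim \|\boldsymbol{\varepsilon}_h(\boldsymbol{e}_h)\|_0+\lambda\|\div\boldsymbol{e}_h\|_0 .
\]
The factor $\lambda$ is bad. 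To handle it I would split the right-hand side as $2\mu(\boldsymbol{\varepsilon}_h(\boldsymbol{e}_h),\boldsymbol{\varepsilon}(\boldsymbol{v}))+\lambda(\div\boldsymbol{e}_h,\div\boldsymbol{v})$. The second term is a gradient, $\boldsymbol{g}_2=\nabla(\lambda\div\boldsymbol{e}_h)$, so its contribution is absorbed by \eqref{fourregularity2} through the $\inf_q$. Then the refined regularity \eqref{Regularity-u-2}, made $\lambda$-robust in this way with $\iota\eqsim1$, gives
\[
\|\boldsymbol{z}\|_3+\lambda\|\div\boldsymbol{z}\|_2\lesssim \|\boldsymbol{\varepsilon}_h(\boldsymbol{e}_h)\|_0+\sqrt{\lambda}\|\div\boldsymbol{e}_h\|_0 .
\]
If this does not follow cleanly, I would instead apply \eqref{Regularity-u-2} to the two pieces separately. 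For the pressure-like piece I would use the structure $\lambda\div\boldsymbol{z}\approx$ a pressure, as in \eqref{fourregularity2}. This is the step I expect to be the main obstacle: obtaining $H^3$ regularity of $\boldsymbol{z}$ uniform in $\lambda$ with data of size only $\sqrt{\lambda}\|\div \boldsymbol{e}_h\|_0$.

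\textbf{Error representation.} Test with $\boldsymbol{v}=\boldsymbol{e}_h$, after passing through $E_h$ and $I_h\boldsymbol{z}$. This gives
\[
b_h(\boldsymbol{e}_h,\boldsymbol{e}_h)= \iota^2a_h(\boldsymbol{z},\boldsymbol{e}_h)+b_h(\boldsymbol{z},\boldsymbol{e}_h)+\mathcal{C}(\boldsymbol{z},\boldsymbol{e}_h),
\]
where $\mathcal C$ is the consistency functional of Lemma~\ref{consistency-lemma2} with $\boldsymbol{u}$ replaced by $\boldsymbol{z}$. The nonconformity is treated by the same integration by parts as in Lemmas~\ref{consistency-lemma1}--\ref{consistency-lemma2}, with $\boldsymbol{e}_h$ in the place of $\boldsymbol{v}_h$. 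By symmetry, insert $I_h\boldsymbol{z}$ and use Galerkin orthogonality in the form
\[
\iota^2a_h(\boldsymbol{e}_h,I_h\boldsymbol{z})+b_h(\boldsymbol{e}_h,I_h\boldsymbol{z})=(\boldsymbol{f},I_h\boldsymbol{z})-\iota^2a_h(\boldsymbol{u},I_h\boldsymbol{z})-b_h(\boldsymbol{u},I_h\boldsymbol{z}).
\]
This yields four terms:
\begin{itemize}
\item[(i)] $\iota^2a_h(\boldsymbol{e}_h,\boldsymbol{z}-I_h\boldsymbol{z})+b_h(\boldsymbol{e}_h,\boldsymbol{z}-I_h\boldsymbol{z})$. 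It is bounded by $\interleave\boldsymbol{e}_h\interleave_{\iota,\lambda,h}\interleave\boldsymbol{z}-I_h\boldsymbol{z}\interleave_{\iota,\lambda,h}$. By \eqref{main-result-uh-1} and \eqref{Ih-error2} this is $\lesssim h\|\boldsymbol{f}\|_0\cdot h(\|\boldsymbol{z}\|_3+\sqrt\lambda\|\div\boldsymbol{z}\|_2)$.
\item[(ii)] The consistency error for $\boldsymbol{u}$ tested with $I_h\boldsymbol{z}$. I would write it as the consistency error tested with $I_h\boldsymbol{z}-\boldsymbol{z}$, since it vanishes on $H_0^2$ test functions. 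This gains an extra $h$ by \eqref{Ih-error1} and the weak continuity \eqref{weak-continuity-vh1}, as in \eqref{consistency-error4} with $s=3$.
\item[(iii)] The consistency error for $\boldsymbol{z}$ tested with $\boldsymbol{e}_h$. Via $E_h$, \eqref{connection-Vh1} and the jump projection argument $Q_{1,F}$, it is $\lesssim h\|\boldsymbol{z}\|_3\,\|\boldsymbol{e}_h\|$ in the broken $H^2$-type norm. That norm is $\lesssim h\|\boldsymbol{f}\|_0$ by \eqref{main-result-uh-1} and \eqref{discreteKornH2}.
\item[(iv)] Commuting-property terms for the divergence, which use \eqref{commutative1} and \eqref{connection-change} so that no $\lambda$ appears. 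The $\lambda$ parts pair $\div\boldsymbol{e}_h$ with $\div(\boldsymbol{z}-I_h\boldsymbol{z})=(I-Q_{1,h})\div\boldsymbol{z}$, which is orthogonal to piecewise $\mathbb P_1$.
\end{itemize}

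\textbf{Conclusion.} Collecting the terms, each is $\lesssim h^2\|\boldsymbol{f}\|_0(\|\boldsymbol{z}\|_3+\sqrt\lambda\|\div\boldsymbol{z}\|_2)$. This is bounded by $h^2\|\boldsymbol{f}\|_0(\|\boldsymbol{\varepsilon}_h(\boldsymbol{e}_h)\|_0+\sqrt\lambda\|\div\boldsymbol{e}_h\|_0)$. Dividing by the latter and using $\iota\eqsim1$ to replace the powers of $\iota$ by $\iota^{-3/2}$ gives \eqref{H1-result}.
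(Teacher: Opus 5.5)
Your plan is correct and is essentially the paper's proof. It is a duality argument whose $\lambda$-robust $H^3$ dual regularity comes from the energy estimate combined with \eqref{fourregularity1}--\eqref{fourregularity2}, with the $\lambda\div$ part of the data absorbed as a pressure; it then uses the same splitting into an interpolation/$E_h$-consistency part and a Galerkin-orthogonality part tested with $I_h\boldsymbol{z}-\boldsymbol{z}$.

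The one difference from the paper is the choice of dual data. The paper takes $-\div\boldsymbol{\sigma}(E_h(\boldsymbol{u}-\boldsymbol{u}_h))$, bounds $E_h(\boldsymbol{u}-\boldsymbol{u}_h)$, and concludes with the triangle inequality and \eqref{connection-Vh1}--\eqref{connection-Vh2}. Your broken data $\boldsymbol{\sigma}_h(\boldsymbol{e}_h)$ instead forces you to absorb the extra term $(\boldsymbol{\sigma}_h(\boldsymbol{e}_h),\boldsymbol{\varepsilon}_h(\boldsymbol{e}_h-E_h\boldsymbol{e}_h))$ directly. This is harmless: the term is $O(h^2)$ times your target norm by \eqref{connection-Vh1}, \eqref{connection-change} and \eqref{main-result-uh-1}.

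One correction: the dual bound holds, and is only ever used, with $\sqrt{\lambda}\|\div\boldsymbol{z}\|_2$ on the left, not $\lambda\|\div\boldsymbol{z}\|_2$. The reason is that $\lambda\div\boldsymbol{z}$ behaves like a pressure driven by $\lambda\div\boldsymbol{e}_h$, so it need not be bounded by $\sqrt{\lambda}\|\div\boldsymbol{e}_h\|_0$.
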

\begin{proof}
Consider the dual problem: find $\boldsymbol{\widetilde{u}}\in H_0^2(\Omega;\mathbb{R}^d)$ such that
\begin{equation}
\label{dual-u}
\iota^2\div\Delta\boldsymbol{\sigma}(\boldsymbol{\widetilde{u}})-\div\boldsymbol{\sigma}(\boldsymbol{\widetilde{u}})=-\div\boldsymbol{\sigma}(E_h(\boldsymbol{u}-\boldsymbol{u}_h)).
\end{equation}
Multiplying \eqref{dual-u} by $\boldsymbol{\widetilde{u}}$, it holds from the integration by parts that
\begin{align*}
&\quad\iota^2(2\mu|\boldsymbol{\varepsilon}(\boldsymbol{\widetilde{u}})|^2_1+\lambda|\div\boldsymbol{\widetilde{u}}|^2_1)+2\mu\|\boldsymbol{\varepsilon}(\boldsymbol{\widetilde{u}})\|^2_0+\lambda\|\div\boldsymbol{\widetilde{u}}\|^2_0\\
&\leq \|\boldsymbol{\varepsilon}(E_h(\boldsymbol{u}-\boldsymbol{u}_h))\|_{0}\|\boldsymbol{\varepsilon}(\boldsymbol{\widetilde{u}})\|_0+ \lambda\|\div(E_h(\boldsymbol{u}-\boldsymbol{u}_h))\|_0\|\div\boldsymbol{\widetilde{u}}\|_0,
\end{align*}
which implies
\begin{align*}
&\quad\iota(|\boldsymbol{\widetilde{u}}|_2+\sqrt{\lambda}|\div\boldsymbol{\widetilde{u}}|_1)+\|\boldsymbol{\widetilde{u}}\|_1+\sqrt{\lambda}\|\div\boldsymbol{\widetilde{u}}\|_0\\
&\lesssim\|\boldsymbol{\varepsilon}(E_h(\boldsymbol{u}-\boldsymbol{u}_h))\|_{0}+\sqrt{\lambda}\|\div(E_h(\boldsymbol{u}-\boldsymbol{u}_h))\|_{0}.
\end{align*}
By \eqref{fourregularity1}--\eqref{fourregularity2}, 
\begin{align*}
\iota^{2}\|\div\boldsymbol{\widetilde{u}}\|_2&\lesssim\frac{1}{\lambda}(\|\boldsymbol{\varepsilon}(E_h(\boldsymbol{u}-\boldsymbol{u}_h))\|_{0}+\|\boldsymbol{\widetilde{u}}\|_1)+ \|\div(E_h(\boldsymbol{u}-\boldsymbol{u}_h))\|_{0}+\|\div\boldsymbol{\widetilde{u}}\|_0, \\
\iota^{2}\|\boldsymbol{\widetilde{u}}\|_3&\lesssim(1+\dfrac{1}{\lambda})(\|\boldsymbol{\varepsilon}(E_h(\boldsymbol{u}-\boldsymbol{u}_h))\|_{0}+\|\boldsymbol{\widetilde{u}}\|_1)+ \|\div(E_h(\boldsymbol{u}-\boldsymbol{u}_h))\|_{0}.
\end{align*}
Combining the last three equations gives
\begin{equation}
\label{dual-regularity-u}
\sum_{i=0}^2\iota^i(\|\boldsymbol{\widetilde{u}}\|_{i+1}+\sqrt{\lambda}\|\div\boldsymbol{\widetilde{u}}\|_i) 
\lesssim\|\boldsymbol{\varepsilon}(E_h(\boldsymbol{u}-\boldsymbol{u}_h))\|_{0}+\sqrt{\lambda}\|\div(E_h(\boldsymbol{u}-\boldsymbol{u}_h))\|_{0}.
\end{equation}
Multiply \eqref{dual-u} by $E_h(\boldsymbol{u}-\boldsymbol{u}_h)$ and apply the integration by parts to get
\begin{equation}
\label{H1-result-u-pf1}
\begin{aligned}
&\quad2\mu\|\boldsymbol{\varepsilon}(E_h(\boldsymbol{u}-\boldsymbol{u}_h))\|^2_{0}+\lambda\|\div(E_h(\boldsymbol{u}-\boldsymbol{u}_h))\|^2_{0}\\
&= -\iota^2(\Delta\boldsymbol{\sigma}(\boldsymbol{\widetilde{u}}),\boldsymbol{\varepsilon}(E_h(\boldsymbol{u}-\boldsymbol{u}_h)))+(\boldsymbol{\sigma}(\boldsymbol{\widetilde{u}}),\boldsymbol{\varepsilon}(E_h(\boldsymbol{u}-\boldsymbol{u}_h)))\\
&=\iota^2a_h(\boldsymbol{u}-\boldsymbol{u}_h,\boldsymbol{\widetilde{u}})+b_h(\boldsymbol{u}-\boldsymbol{u}_h,\boldsymbol{\widetilde{u}})\\
&\quad+\iota^2(\Delta\boldsymbol{\sigma}(\boldsymbol{\widetilde{u}}),\boldsymbol{\varepsilon}_h(\boldsymbol{u}-\boldsymbol{u}_h-E_h(\boldsymbol{u}-\boldsymbol{u}_h)))\\
&\quad-(\boldsymbol{\sigma}(\boldsymbol{\widetilde{u}}),\boldsymbol{\varepsilon}_h(\boldsymbol{u}-\boldsymbol{u}_h-E_h(\boldsymbol{u}-\boldsymbol{u}_h))) \\
&= I_1+I_2,
\end{aligned}
\end{equation}
where
\begin{align*}
I_1&:=\iota^2a_h(\boldsymbol{u}-\boldsymbol{u}_h,\boldsymbol{\widetilde{u}}-I_h\boldsymbol{\widetilde{u}})+b_h(\boldsymbol{u}-\boldsymbol{u}_h,\boldsymbol{\widetilde{u}}-I_h\boldsymbol{\widetilde{u}})\\
&\quad\;+\iota^2(\Delta\boldsymbol{\sigma}(\boldsymbol{\widetilde{u}}),\boldsymbol{\varepsilon}_h(\boldsymbol{u}-\boldsymbol{u}_h-E_h(\boldsymbol{u}-\boldsymbol{u}_h))) \\
&\quad\; -(\boldsymbol{\sigma}(\boldsymbol{\widetilde{u}}),\boldsymbol{\varepsilon}_h(\boldsymbol{u}-\boldsymbol{u}_h-E_h(\boldsymbol{u}-\boldsymbol{u}_h))), \\
I_2&:=\iota^2a_h(\boldsymbol{u}-\boldsymbol{u}_h,I_h\boldsymbol{\widetilde{u}})+b_h(\boldsymbol{u}-\boldsymbol{u}_h,I_h\boldsymbol{\widetilde{u}}).
\end{align*}
By the Cauchy--Schwarz inequality, \eqref{Ih-error2}, \eqref{connection-Vh1}--\eqref{connection-Vh2}, \eqref{dual-regularity-u} and \eqref{main-result-uh-1},
\begin{align}
\begin{split}
\label{H1-result-u-pf2}
I_1 &\lesssim {\interleave\boldsymbol{u}-\boldsymbol{u}_h\interleave_{\iota,\lambda,h}}{\interleave\boldsymbol{\widetilde{u}}-I_h\boldsymbol{\widetilde{u}}\interleave_{\iota,\lambda,h}} \\
&\quad+\iota^{-1}h{\interleave\boldsymbol{u}-\boldsymbol{u}_h\interleave_{\iota,\lambda,h}}\sum_{i=0}^2\iota^i(\|\boldsymbol{\widetilde{u}}\|_{i+1}+\sqrt{\lambda}\|\div\boldsymbol{\widetilde{u}}\|_i)\\
&\lesssim\iota^{-3/2}h^2\|\boldsymbol{f}\|_0(\|\boldsymbol{\varepsilon}(E_h(\boldsymbol{u}-\boldsymbol{u}_h))\|_{0}+\sqrt{\lambda}\|\div(E_h(\boldsymbol{u}-\boldsymbol{u}_h))\|_{0}).
\end{split}
\end{align}
We get from \eqref{IPDG}, \eqref{weak1} and the integration by parts that
\begin{align*}
I_2&=\iota^2a_h(\boldsymbol{u},I_h\boldsymbol{\widetilde{u}})+b_h(\boldsymbol{u},I_h\boldsymbol{\widetilde{u}})-(\boldsymbol{f},I_h\boldsymbol{\widetilde{u}})\\
&= \iota^2a_h(\boldsymbol{u},I_h\boldsymbol{\widetilde{u}}-\boldsymbol{\widetilde{u}})+b_h(\boldsymbol{u},I_h\boldsymbol{\widetilde{u}}-\boldsymbol{\widetilde{u}})-(\boldsymbol{f},I_h\boldsymbol{\widetilde{u}}-\boldsymbol{\widetilde{u}}) \\
&= -\iota^2(\Delta(\boldsymbol{\sigma}(\boldsymbol{u})), \boldsymbol{\varepsilon}_h(I_h\boldsymbol{\widetilde{u}}-\boldsymbol{\widetilde{u}}))+(\boldsymbol{\sigma}(\boldsymbol{u}), \boldsymbol{\varepsilon}_h(I_h\boldsymbol{\widetilde{u}}-\boldsymbol{\widetilde{u}}))-(\boldsymbol{f},I_h\boldsymbol{\widetilde{u}}-\boldsymbol{\widetilde{u}})\\
&\lesssim (\iota^2|\boldsymbol{\sigma}(\boldsymbol{u})|_2+\|\boldsymbol{\sigma}(\boldsymbol{u}-\boldsymbol{u}_0)\|_0)\|\boldsymbol{\varepsilon}_h(I_h\boldsymbol{\widetilde{u}}-\boldsymbol{\widetilde{u}})\|_0 + \sum_{T\in\mathcal{T}_h}(\boldsymbol{\sigma}(\boldsymbol{u}_0)\boldsymbol{n}, I_h\boldsymbol{\widetilde{u}}-\boldsymbol{\widetilde{u}})_{\partial T}.
\end{align*}
Then we acquire from the regularity results \eqref{Regularity-u}--\eqref{Regularity-divu}, \eqref{Ih-error1}, \eqref{consistency-error4} and \eqref{dual-regularity-u} that
\begin{align}
\label{H1-result-u-pf3}
I_2\lesssim \iota^{-3/2}h^2\|\boldsymbol{f}\|_0(\|\boldsymbol{\varepsilon}(E_h(\boldsymbol{u}-\boldsymbol{u}_h))\|_{0}+\sqrt{\lambda}\|\div(E_h(\boldsymbol{u}-\boldsymbol{u}_h))\|_{0}).
\end{align}
By \eqref{H1-result-u-pf1}--\eqref{H1-result-u-pf3}, it holds
\begin{equation*}
\|\boldsymbol{\varepsilon}_h(E_h(\boldsymbol{u}-\boldsymbol{u}_h))\|_{0}+\sqrt{\lambda}\|\div(E_h(\boldsymbol{u}-\boldsymbol{u}_h))\|_{0}\lesssim\iota^{-3/2}h^2\|\boldsymbol{f}\|_0,
\end{equation*}
which combined with the triangle inequality, \eqref{connection-Vh1}--\eqref{connection-Vh2} and \eqref{main-result-uh-1} gives \eqref{H1-result}. 
\end{proof}

\section{Numerical Experiments}\label{sec5}
In this section, we verify the theoretical convergence rates and investigate the robustness of the IP nonconforming FEM \eqref{IPDG} with respect to the size parameter $\iota$ and the Lam\'{e} coefficient $\lambda$ by means of numerical examples in both two and three dimensions. For the three-dimensional experiments, we use a hybridized implementation of \eqref{IPDG}, in which the interelement continuity constraints associated with the face DoFs are imposed by Lagrange multipliers.

In Subsections \ref{sec51}--\ref{sec52}, the tests are carried out on the unit domain $\Omega=(0,1)^d$ ($d=2,3$) using uniform simplicial meshes in both two and three dimensions. We set $\mu=1$, and choose the penalty parameter as $\eta=20$ in two dimensions and $\eta=40$ in three dimensions.

In Subsection \ref{sec53}, the IP nonconforming FEM~\eqref{IPDG} is further tested on a multiply connected domain through the uniaxial tension of a plate with a circular hole. The numerical results for several values of the size parameter are compared with the analytical Khakalo--Niiranen solution.

\subsection{Numerical results without boundary layers}\label{sec51}
We first consider exact solutions that are divergence-free and do not exhibit boundary layers. The right-hand side $\boldsymbol{f}$ is computed from \eqref{SGE0} and is independent of the Lam\'{e} coefficient $\lambda$. The mesh size $h$ is refined from $2^{-3}$ to $2^{-7}$ in 2D, and from $2^{-1}$ to $2^{-4}$ in 3D. The size parameter $\iota$ varies from $1$ to $10^{-6}$ in 2D and from $5\times10^{-2}$ to $5\times10^{-5}$ in 3D, while $\lambda$ takes $1$ and $10^6$.
\begin{example}\label{example1}
\normalfont
In two dimensions, we test the discrete method \eqref{IPDG} with the exact solution
$$
\boldsymbol{u}=
\left(
\begin{matrix}
\sin^3(\pi x_1)\sin(2\pi x_2)\sin(\pi x_2)\\
-\sin^3(\pi x_2)\sin(2\pi x_1)\sin(\pi x_1)
\end{matrix}
\right).
$$
Let
\begin{equation*}
{\rm Err}_{1}:=\|\boldsymbol{\varepsilon}_h(\boldsymbol{u}-\boldsymbol{u}_{h})\|_{0}+\sqrt{\lambda}\|\div(\boldsymbol{u}-\boldsymbol{u}_{h})\|_{0}.
\end{equation*}

Numerical errors $\interleave\boldsymbol{u}-\boldsymbol{u}_{h}\interleave_{\iota,\lambda,h}$ and $\rm {Err}_{1}$ are presented in Table~\ref{table12} and Table~\ref{table-sigma}, respectively.
We observe from Table \ref{table12} that ${\interleave\boldsymbol{u}-\boldsymbol{u}_{h}\interleave}_{\iota,\lambda,h}\eqsim \mathcal{O}(h)$ for $\iota = 1, 10^{-1}$, and ${\interleave\boldsymbol{u}-\boldsymbol{u}_{h}\interleave}_{\iota,\lambda,h}\eqsim \mathcal{O}(h^2)$ for $\iota = 10^{-5}, 10^{-6}$, which are optimal and consistent with the theoretical result \eqref{main-result-uh-1}. For $\iota \eqsim 1$, Table~\ref{table-sigma} indicates that ${\rm Err}_{1} \eqsim \mathcal{O}(h^2)$, in agreement with \eqref{H1-result}. Furthermore, a comparison of the results for $\lambda=1$ and $\lambda=10^6$ in both tables demonstrates that the convergence rates and error magnitudes are virtually unaffected by the large jump in $\lambda$. This confirms the $\lambda$-robustness of the proposed method.
\begin{table}
\centering
\caption{Error $\interleave\boldsymbol{u}-\boldsymbol{u}_{h}\interleave_{\iota,\lambda,h}$ of the discrete method \eqref{IPDG} for Example \ref{example1}.  }
\vspace{-1.0em}
\label{table12}
\begin{tabular}{ccccccc}
	\toprule
	$\lambda$ & $\iota\backslash h$ & 1/8 & 1/16 & 1/32 & 1/64 & 1/128\\
	\midrule
	\multirow{8}{*}{$1$}
	& 1 & 1.475e+01 & 8.359e+00 & 4.185e+00 & 2.053e+00 & 1.019e+00\\
	& rate &  & 0.82 & 1.00 & 1.03 & 1.01\\
	& $10^{-1}$ & 1.556e+00 & 8.382e-01 & 4.151e-01 & 2.046e-01 & 1.017e-01\\
	& rate &  & 0.89 & 1.01 & 1.02 & 1.01\\
	& $10^{-5}$ & 1.807e-01 & 4.776e-02 & 1.216e-02 & 3.054e-03 & 7.640e-04\\
	& rate &  & 1.92 & 1.97 & 1.99 & 2.00\\
	& $10^{-6}$ & 1.807e-01 & 4.776e-02 & 1.216e-02 & 3.054e-03 & 7.640e-04\\
	& rate &  & 1.92 & 1.97 & 1.99 & 2.00\\
	\midrule
	\multirow{8}{*}{$10^{6}$}
	& 1 & 1.518e+01 & 8.626e+00 & 4.305e+00 & 2.105e+00 & 1.050e+00\\
	& rate &  & 0.82 & 1.00 & 1.03 & 1.00\\
	& $10^{-1}$ & 1.603e+00 & 8.651e-01 & 4.269e-01 & 2.096e-01 & 1.040e-01\\
	& rate &  & 0.89 & 1.02 & 1.03 & 1.01\\
	& $10^{-5}$ & 1.822e-01 & 4.724e-02 & 1.189e-02 & 2.972e-03 & 7.425e-04\\
	& rate &  & 1.95 & 1.99 & 2.00 & 2.00\\
	& $10^{-6}$ & 1.822e-01 & 4.724e-02 & 1.189e-02 & 2.972e-03 & 7.424e-04\\
	& rate &  & 1.95 & 1.99 & 2.00 & 2.00\\
	\bottomrule
\end{tabular}
\vspace{1.0em}
\end{table}

\begin{table}
	\centering
	\caption{$\rm{Err}_1$ of the discrete method \eqref{IPDG} for Example \ref{example1}.}
	\vspace{-1.0em}
	\label{table-sigma}
	\begin{tabular}{ccccccc}
		\toprule
		$\lambda$ & $\iota\backslash h$ & 1/8 & 1/16 & 1/32 & 1/64 & 1/128\\
		\midrule
		\multirow{4}{*}{$1$}
		& $1$ & 8.354e-01 & 3.699e-01 & 1.208e-01 & 3.308e-02 & 8.486e-03\\
		& rate &  & 1.18 & 1.61 & 1.87 & 1.96\\
		& $10^{-1}$ & 5.760e-01 & 2.354e-01 & 7.346e-02 & 1.976e-02 & 5.044e-03\\
		& rate &  & 1.29 & 1.68 & 1.89 & 1.97\\
		\midrule
		\multirow{4}{*}{$10^{6}$}
		& $1$ & 7.539e-01 & 3.346e-01 & 1.108e-01 & 3.056e-02 & 7.842e-03\\
		& rate &  & 1.17 & 1.59 & 1.86 & 1.96\\
		&$10^{-1}$ & 5.182e-01 & 2.119e-01 & 6.691e-02 & 1.807e-02 & 4.605e-03\\
		& rate &  & 1.29 & 1.66 & 1.89 & 1.97\\
		\bottomrule
	\end{tabular}
	\vspace{1.0em}
\end{table}


\end{example}
\begin{example}\label{example2}
\normalfont
In three dimensions, we consider the exact solution
$$
\boldsymbol{u}=
\left(
\begin{matrix}
2\sin^3(\pi x)\sin(2\pi y)\sin(\pi y)\sin(2\pi z)\sin(\pi z)\\
-\sin^3(\pi y)\sin(2\pi z)\sin(\pi z)\sin(2\pi x)\sin(\pi x)\\
-\sin^3(\pi z)\sin(2\pi x)\sin(\pi x)\sin(2\pi y)\sin(\pi y)
\end{matrix}
\right).
$$
Table~\ref{table12-3d} shows that the convergence rate of $\interleave\boldsymbol{u}-\boldsymbol{u}_{h}\interleave_{\iota,\lambda,h}$ is $\mathcal{O}(h)$ for $\iota = 5\times10^{-2}$, and improves to $\mathcal{O}(h^2)$ for $\iota = 5\times10^{-4}, 5\times10^{-5}$. These results are optimal and align with the theoretical result \eqref{main-result-uh-1}. Similar to the 2D case, the results confirm the robustness of the method with respect to $\lambda$ in three dimensions.

\begin{table}
	\centering
	\caption{Error $\interleave\boldsymbol{u}-\boldsymbol{u}_{h}\interleave_{\iota,\lambda,h}$ of the discrete method \eqref{IPDG} for Example \ref{example2}.  }
	\vspace{-1.0em}
	\label{table12-3d}
	\begin{tabular}{cccccc}
		\toprule
		$\lambda$ & $\iota\backslash h$ & 1/2 & 1/4 & 1/8 & 1/16 \\
		\midrule
		\multirow{8}{*}{$1$}
		& $5\times10^{-2}$ & 3.440e+00 & 2.669e+00 & 1.724e+00 & 8.411e-01 \\
		& rate &  & 0.37 & 0.63 & 1.04 \\
		& $5\times10^{-3}$ & 1.941e+00 & 9.705e-01 & 3.481e-01 & 1.289e-01 \\
		& rate &  & 1.00 & 1.48 & 1.43 \\
		& $5\times10^{-4}$ & 1.756e+00 & 8.133e-01 & 2.502e-01 & 6.954e-02 \\
		& rate &  & 1.11 & 1.70 & 1.85 \\
		& $5\times10^{-5}$ & 1.735e+00 & 7.935e-01 & 2.383e-01 & 6.346e-02 \\
		& rate &  & 1.13 & 1.74 & 1.91 \\
		\midrule
		\multirow{8}{*}{$10^{6}$}
		& $5\times10^{-2}$ & 3.511e+00 & 2.738e+00 & 1.716e+00 & 8.705e-01 \\
		& rate &  & 0.36 & 0.67 & 0.98 \\
		& $5\times10^{-3}$ & 1.974e+00 & 1.007e+00 & 3.637e-01 & 1.343e-01 \\
		& rate &  & 0.97 & 1.47 & 1.44 \\
		& $5\times10^{-4}$ & 1.768e+00 & 8.300e-01 & 2.546e-01 & 7.067e-02 \\
		& rate &  & 1.09 & 1.70 & 1.85 \\
		& $5\times10^{-5}$ & 1.744e+00 & 8.082e-01 & 2.415e-01 & 6.399e-02 \\
		& rate &  & 1.11 & 1.74 & 1.92 \\
		\bottomrule
	\end{tabular}
	\vspace{1.0em}
\end{table}
\end{example}
\subsection{Numerical results with boundary layers}\label{sec52}
In this subsection, we evaluate the performance of the discrete method \eqref{IPDG} in the presence of boundary layers. We adopt divergence-free solutions of the linear elasticity problem \eqref{SGElinear}, which induce strong boundary layers as $\iota \rightarrow 0$. The right-hand side $\boldsymbol{f}$, computed from \eqref{SGElinear}, is used as the forcing term in \eqref{SGE0}, and notably, it is independent of both the size parameter $\iota$ and the Lam\'{e} coefficient $\lambda$. The tests use $\iota = 10^{-6}, 10^{-8}$ in 2D, and $\iota = 10^{-4}, 10^{-6}$ in 3D, with $\lambda = 1, 10^6$. The mesh refinement levels follow those in Examples \ref{example1} and \ref{example2}.
\begin{example}\label{example3}
\normalfont
In two dimensions, the exact solution of the linear elasticity model \eqref{SGElinear} is chosen as
$$
\boldsymbol{u}_0=
\left(
\begin{matrix}
(e^{\cos(2\pi x_1)}-e)\sin(2\pi x_2)e^{\cos(2\pi x_2)}\\
-(e^{\cos(2\pi x_2)}-e)\sin(2\pi x_1)e^{\cos(2\pi x_1)}
\end{matrix}
\right).
$$

Numerical error $\|\boldsymbol{u}_0-\boldsymbol{u}_{h}\|_{\iota,\lambda,h}$  of the discrete method \eqref{IPDG} for $\lambda = 1$ and $\lambda = 10^6$ is presented in Table~\ref{table34}. We can see from Table~\ref{table34} that $\|\boldsymbol{u}_0-\boldsymbol{u}_{h}\|_{\iota,\lambda,h}\eqsim \mathcal{O}(h^{2})$, which agrees with the estimate \eqref{main-result-uh-2}.
This confirms that the discrete method \eqref{IPDG} not only achieves optimal convergence but also maintains robustness with respect to the size parameter $\iota$ and the Lam\'{e} coefficient $\lambda$, even in the presence of strong boundary layers.

\begin{table}
	\centering
	\caption{Error $\|\boldsymbol{u}_0-\boldsymbol{u}_h\|_{\iota,\lambda,h}$ of the discrete method \eqref{IPDG} for Example \ref{example3}.}
	\vspace{-1.0em}
	\label{table34}
	\begin{tabular}{ccccccc}
		\toprule
		$\lambda$ & $\iota\backslash h$ & 1/8 & 1/16 & 1/32 & 1/64 & 1/128\\
		\midrule
		\multirow{4}{*}{$1$}
		& $10^{-6}$ & 1.289e+00 & 3.462e-01 & 8.784e-02 & 2.204e-02 & 5.513e-03\\
		& rate &  & 1.90 & 1.98 & 1.99 & 2.00\\
		& $10^{-8}$ & 1.289e+00 & 3.462e-01 & 8.784e-02 & 2.204e-02 & 5.513e-03\\
		& rate &  & 1.90 & 1.98 & 1.99 & 2.00\\
		\midrule
		\multirow{4}{*}{$10^{6}$}
		& $10^{-6}$ & 1.338e+00 & 3.492e-01 & 8.710e-02 & 2.164e-02 & 5.393e-03\\
		& rate &  & 1.94 & 2.00 & 2.01 & 2.00\\
		& $10^{-8}$ & 1.338e+00 & 3.492e-01 & 8.710e-02 & 2.164e-02 & 5.393e-03\\
		& rate &  & 1.94 & 2.00 & 2.01 & 2.00\\
		\bottomrule
	\end{tabular}
	\vspace{1.0em}
\end{table}


\end{example}
\begin{example}\label{example4}
\normalfont
In three dimensions, we employ the linear elasticity solution given by
$$
\boldsymbol{u}_0=
\left(
\begin{matrix}
2x^2(1-x)^2y(1-y)(1-2y)z(1-z)(1-2z)\\
-y^2(1-y)^2x(1-x)(1-2x)z(1-z)(1-2z)\\
-z^2(1-z)^2x(1-x)(1-2x)y(1-y)(1-2y)
\end{matrix}
\right).
$$
As shown in Table~\ref{table34-3d}, the error $\|\boldsymbol{u}_0-\boldsymbol{u}_{h}\|_{\iota,\lambda,h}\eqsim \mathcal{O}(h^2)$, which further confirms the robustness and optimality of the method \eqref{IPDG} with respect to both $\iota$ and $\lambda$.
\begin{table}
	\centering
	\caption{Error $\|\boldsymbol{u}_0-\boldsymbol{u}_h\|_{\iota,\lambda,h}$ of the discrete method \eqref{IPDG} for Example \ref{example4}.}
	\vspace{-1.0em}
	\label{table34-3d}
	\begin{tabular}{cccccc}
		\toprule
		$\lambda$ & $\iota\backslash h$ & 1/2 & 1/4 & 1/8 & 1/16 \\
		\midrule
		\multirow{4}{*}{$1$}
		& $10^{-4}$ & 1.837e-03 & 7.613e-04 & 2.174e-04 & 5.652e-05 \\
		& rate &  & 1.26 & 1.81 & 1.94 \\
		& $10^{-6}$ & 1.821e-03 & 7.565e-04 & 2.146e-04 & 5.496e-05 \\
		& rate &  & 1.27 & 1.82 & 1.96 \\
		\midrule
		\multirow{4}{*}{$10^{6}$}
		& $10^{-4}$ & 1.924e-03 & 8.024e-04 & 2.256e-04 & 5.814e-05 \\
		& rate &  & 1.26 & 1.82 & 1.96 \\
		& $10^{-6}$ & 1.916e-03 & 7.965e-04 & 2.232e-04 & 5.639e-05 \\
		& rate &  & 1.27 & 1.84 & 1.98 \\
		\bottomrule
	\end{tabular}
	\vspace{1.0em}
\end{table}
\end{example}

\subsection{Uniaxial tension of a plate with a circular hole}
\label{sec53}

We finally test the proposed method for the SGE model on a multiply connected domain. The reference problem is the uniaxial tension of an infinite plate containing a circular hole. Its domain is
\begin{equation}
	\Omega_\infty
	=\mathbb R^2\setminus\overline{B_R(\boldsymbol 0)},
	\qquad R=1~\mathrm{mm},
	\label{eq:perforated-infinite-domain}
\end{equation}
where \(B_R(\boldsymbol 0):=\{\boldsymbol x\in\mathbb R^2: \lvert\boldsymbol x\rvert<R\}\) is the open disk of radius \(R\) centered at the origin. No body force is applied, and the generalized traction and double traction vanish on the hole boundary \(\Gamma_{\rm h}:=\partial B_R(\boldsymbol 0)\).

The material is modeled under plane stress with Young's modulus \(E=70~\mathrm{GPa}\) and Poisson's ratio \(\nu=0.33\). The corresponding two-dimensional Lam\'{e} coefficients are
\[
	\mu=\frac{E}{2(1+\nu)}=26.316~\mathrm{GPa},
	\qquad
	\lambda=\frac{E\nu}{1-\nu^2}=25.923~\mathrm{GPa}.
\]
The plate is subjected at infinity to the uniform tensile stress \(\sigma_\infty=70~\mathrm{MPa}\) in the fixed Cartesian \(x\)-direction. At large distances from the hole, the Cauchy stress therefore approaches the uniform state with \(\sigma_{xx}=\sigma_\infty\) and \(\sigma_{yy}=\sigma_{xy}=0\). The corresponding remote axial strain is \(\varepsilon_0:=\sigma_\infty/E=10^{-3}\), a representative small strain.

For an unperforated plate, the corresponding displacement field is, up to a rigid motion,
\begin{equation}
	\boldsymbol u^\infty(x,y)
	=(\varepsilon_0x,-\nu\varepsilon_0y)^\intercal.
	\label{eq:perforated-far-field-displacement}
\end{equation}
For each $\iota>0$, an analytical displacement
$\boldsymbol u_\iota^{\rm KN}$ for the SGE model on the exterior
domain $\Omega_\infty$ defined in \eqref{eq:perforated-infinite-domain}
is given in \cite{khakalo2017gradient}.
In the notation of \cite[Secs.~3.1 and 4.2]{khakalo2017gradient}, the present uniaxial loading corresponds to \(\xi=0\). For \(\xi=0\), the corresponding displacement \(\boldsymbol u_\iota^{\rm KN}\) satisfies the natural conditions on \(\Gamma_{\rm h}\) and incorporates the perturbation induced by the hole and the size parameter. 
This perturbation decays with the distance from the hole, so the
Khakalo--Niiranen displacement approaches the far-field
displacement $\boldsymbol u^\infty$ given in
\eqref{eq:perforated-far-field-displacement}.

To state this solution, write \(\boldsymbol x=(r\cos\theta,r\sin\theta)^\intercal\) and set
\[
	\boldsymbol e_r=(\cos\theta,\sin\theta)^\intercal,\qquad
	\boldsymbol e_\theta=(-\sin\theta,\cos\theta)^\intercal.
\]
For any displacement field \(\boldsymbol v\), define
\begin{equation*}
	v_r:=\boldsymbol v\cdot\boldsymbol e_r,\qquad
	v_\theta:=\boldsymbol v\cdot\boldsymbol e_\theta,\qquad
	\sigma_{\theta\theta}(\boldsymbol v)
	:=\boldsymbol e_\theta^\intercal
	\boldsymbol\sigma(\boldsymbol v)\boldsymbol e_\theta.
\end{equation*}
Thus, \(\sigma_{\theta\theta}(\boldsymbol v)\) always denotes the Cauchy hoop stress generated by \(\boldsymbol v\).

In this basis, write
\[
\boldsymbol u_\iota^{\rm KN}
=u_{r,\iota}^{\rm KN}\boldsymbol e_r
+u_{\theta,\iota}^{\rm KN}\boldsymbol e_\theta
\]
and take its radial functions from \cite[Eqs.~(35), (39) and Appendix~A]{khakalo2017gradient}:
\begin{equation*}
	u_{r,\iota}^{\rm KN}(r,\theta)
	=\widetilde u_r(r)+\widehat u_r(r)\cos(2\theta),
	\qquad
	u_{\theta,\iota}^{\rm KN}(r,\theta)
	=\widehat u_\theta(r)\sin(2\theta),
\end{equation*}
where \(\gamma=\mu/\lambda\), \(z=r/\iota\), and
\begin{subequations}\label{eq:perforated-khakalo-radial}
\begin{align}
	\widetilde u_r(r)
	&=A_1r+\frac{A_2}{r}+A_4K_1(z),\\
	\widehat u_r(r)
	&=-C_1r-\frac{1+2\gamma}{\gamma}\frac{C_2}{r}
	+\frac{C_3}{r^3}-C_7K_1(z) \notag\\
	&\quad
	+C_8\left[
	\left(1+\frac{8\iota^2}{r^2}\right)K_1(z)
	+\frac{4\iota}{r}K_0(z)
	\right],\\
	\widehat u_\theta(r)
	&=C_1r+\frac{C_2}{r}+\frac{C_3}{r^3}
	+C_7K_1(z)+C_8K_3(z).
\end{align}
\end{subequations}
Here, \(K_n\) denotes the modified Bessel function of the second kind of order \(n\). The coefficients \(A_1=\varepsilon_0(1-\nu)/2\) and \(C_1=-\varepsilon_0(1+\nu)/2\) make the \(r\)-growing terms reproduce the remote deformation \(\boldsymbol u^\infty\). In terms of the coefficients used in \cite[Eq.~(39)]{khakalo2017gradient}, the two inverse-power coefficients in \eqref{eq:perforated-khakalo-radial} are
\[
C_2=R(C_7\alpha_1+C_8\alpha_2),
\qquad
C_3=R^3(C_7\alpha_3+C_8\alpha_4).
\]
The dimensionless quantities \(\alpha_1,\alpha_2,\alpha_3,\alpha_4\) and the coefficients \(A_2,A_4,C_7,C_8\) are given explicitly in Appendix~A of \cite{khakalo2017gradient}.
With these definitions, the collected form \eqref{eq:perforated-khakalo-radial} is algebraically identical to \cite[Eq.~(39)]{khakalo2017gradient}.

To approximate the infinite-plate problem on a finite mesh, we truncate \(\Omega_\infty\) to the computational domain shown in Figure~\ref{fig:perforated-domain}:
\begin{equation}
	\Omega_L=(-L,L)^2\setminus\overline{B_R(\boldsymbol 0)},
	\qquad L=8R.
	\label{eq:perforated-domain}
\end{equation}
On the artificial outer boundary \(\Gamma_{\rm o}:=\partial(-L,L)^2\), we prescribe only its displacement trace,
\begin{equation}
	\boldsymbol u=\boldsymbol u_\iota^{\rm KN}
	\qquad\text{on }\Gamma_{\rm o}.
	\label{eq:perforated-outer-data}
\end{equation}
Equation~\eqref{eq:perforated-outer-data} is the only essential condition on \(\Gamma_{\rm o}\); all remaining boundary terms there are assigned their homogeneous natural values.
Following \cite[Remark~4.9]{HuangHuangTang2024}, we adapt the discrete formulation to the present mixed boundary conditions. The prescribed displacement on $\Gamma_{\rm o}$ is imposed through the corresponding boundary DoFs, whereas the generalized-traction and double-traction data on $\Gamma_{\rm h}$, together with the remaining natural data on $\Gamma_{\rm o}$, are incorporated through the boundary functionals on the right-hand side. Since all the natural data considered here are homogeneous, the corresponding boundary load terms vanish.

Thus, \eqref{eq:perforated-outer-data} uses the exact displacement trace to approximate the omitted exterior part of the infinite plate. 
Since the remaining outer-boundary data are not taken from the
infinite-domain solution, its restriction to the truncated domain
$\Omega_L$ defined in \eqref{eq:perforated-domain} is generally not
the exact solution of the corresponding truncated problem. The reported differences from \(\boldsymbol u_\iota^{\rm KN}\) therefore contain an outer-boundary truncation effect in addition to the discretization and polygonal-geometry errors.

The circular boundary is approximated by a regular \(64\)-gon with apothem \(R\). The mesh is fitted to this polygon and is nonuniform in its neighborhood. Away from the hole, a uniform triangulation of right isosceles triangles with leg length \(h_{\rm b}\) is used. We use \(h_{\rm b}/R = 1,1/2,1/4,1/8\) and set the penalty parameter \(\eta=20\). The central part of the finest mesh is shown in Figure~\ref{fig:perforated-mesh}. To examine the size effects within the SGE regime, we test \(\iota/R = 0.5,0.2,0.1,0.05\).

\begin{figure}[htbp]
	\centering
	\captionsetup[subfigure]{skip=2pt}
	\setlength{\tabcolsep}{2pt}

	\begin{tabular}{cc}

		\begin{subfigure}{0.4\textwidth}
			\centering
			\includegraphics[width=\linewidth]{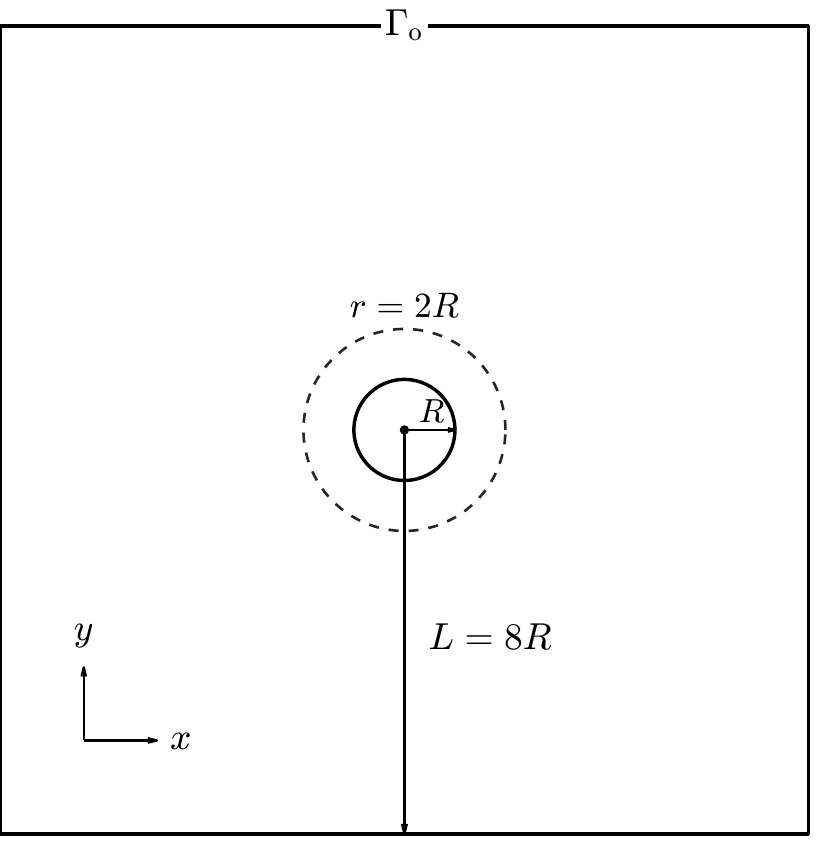}
			\caption{Computational domain.}
			\label{fig:perforated-domain}
		\end{subfigure}
		\hspace{0.04\textwidth}
		\begin{subfigure}{0.4\textwidth}
			\centering
			\includegraphics[width=\linewidth]{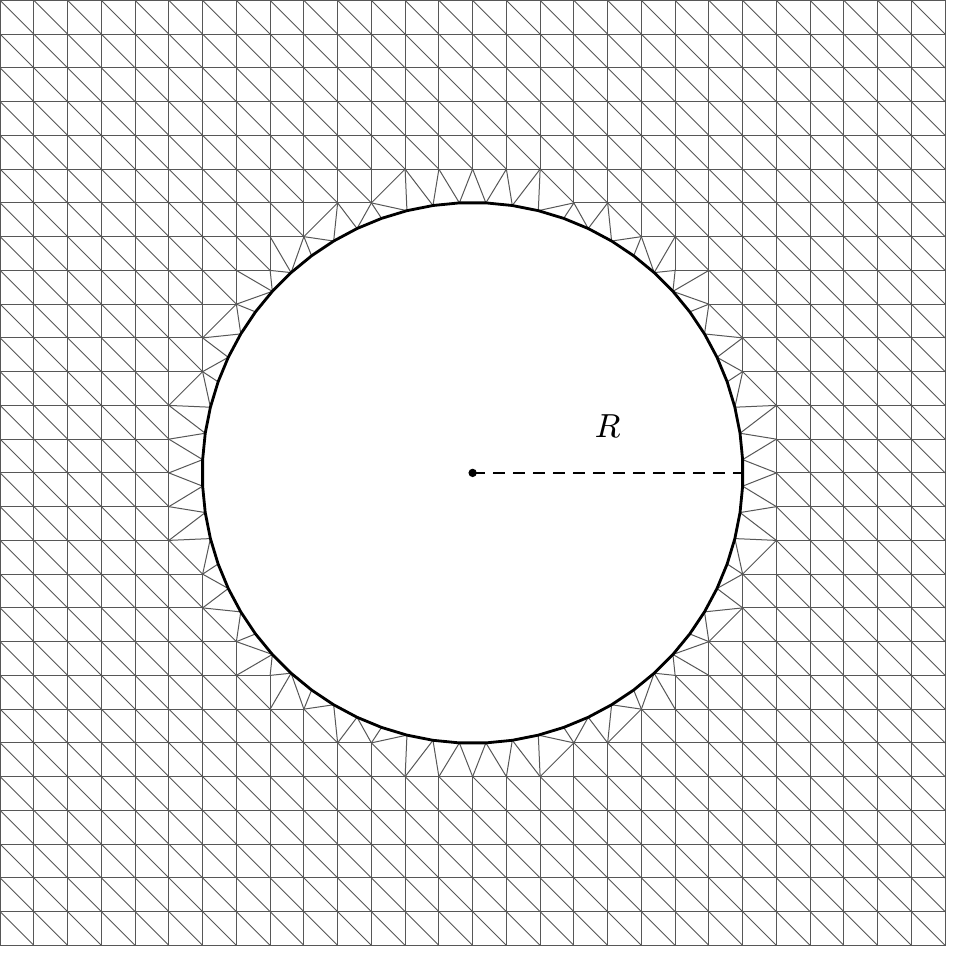}
			\caption{Local mesh for \(h_{\rm b}/R=1/8\).}
			\label{fig:perforated-mesh}
		\end{subfigure}
	\end{tabular}
	\caption{Truncated circular-hole domain and a local view of the computational mesh.}
	\label{fig:perforated-model-mesh}
\end{figure}

The comparison is made on the interior circle \(r=2R\), see Figure~\ref{fig:perforated-domain}. This sampling circle remains close to the stress concentration while avoiding direct stress evaluation at the corners of the polygonal hole. For each fixed \(\iota>0\), let \(\boldsymbol u_h\) denote the corresponding discrete solution, and define
\[
	u_{r,h}:=\boldsymbol u_h\cdot\boldsymbol e_r,
	\qquad
	\sigma_{\theta\theta,h}(\boldsymbol u_h)
	:=\boldsymbol e_\theta^\intercal
	\boldsymbol\sigma_h(\boldsymbol u_h)\boldsymbol e_\theta.
\]
For \(q\in\{\sigma_{\theta\theta},u_r\}\), set
\[
	(q_h,q_\iota^{\rm KN})
	=
	\begin{cases}
	\big(\sigma_{\theta\theta,h}(\boldsymbol u_h),
	\sigma_{\theta\theta}(\boldsymbol u_\iota^{\rm KN})\big),
	& q=\sigma_{\theta\theta},\\
	\big(u_{r,h},
	u_{r,\iota}^{\rm KN}\big),
	& q=u_r.
	\end{cases}
\]
The relative errors are evaluated at $N_\theta=144$ equally spaced points on the sampling circle $r=2R$. Let $\theta_j$, $j=1,\ldots,N_\theta$, denote their angular coordinates. A fixed rotation is applied to the angular grid so that, for all meshes considered, none of the sampling points lies on the mesh skeleton. Consequently, the nonconforming displacement and the elementwise Cauchy stress are evaluated unambiguously from a single element.
The relative discrete error is defined by
\begin{equation*}
	e_h(q;\iota)=
	\frac{\left(\displaystyle\sum_{j=1}^{N_\theta}
		|q_h(2R,\theta_j)-q_\iota^{\rm KN}
		(2R,\theta_j)|^2\right)^{1/2}}
	{\left(\displaystyle\sum_{j=1}^{N_\theta}
		|q_\iota^{\rm KN}(2R,\theta_j)|^2\right)^{1/2}}.
\end{equation*}

As shown in Table~\ref{tab:perforated-khakalo-multiscale}, for each tested value of \(\iota/R\), the errors in both the Cauchy hoop stress and the radial displacement decrease as \(h_{\rm b}/R\) is reduced. This agreement with the Khakalo--Niiranen solution demonstrates the applicability of the proposed method to a geometrically nontrivial perforated domain.

\begin{table}[H]
	\centering
	\caption{Relative discrete errors on the sampling circle \(r=2R\).}
	\vspace{-1.0em}
	\label{tab:perforated-khakalo-multiscale}
	\begin{tabular}{cccccc}
		\toprule
		\(\iota/R\) & \(e_h(\cdot;\iota)\backslash(h_{\rm b}/R)\) & \(1\) & \(1/2\)
		& \(1/4\) & \(1/8\)\\
		\midrule
		\multirow{2}{*}{\(0.5\)}
		& \(e_h(\sigma_{\theta\theta};\iota)\)
		& \(2.0443\%\) & \(0.8962\%\) & \(0.3262\%\) & \(0.1181\%\)\\
		& \(e_h(u_r;\iota)\)
		& \(3.1247\%\) & \(1.7965\%\) & \(0.6870\%\) & \(0.2353\%\)\\
		\midrule
		\multirow{2}{*}{\(0.2\)}
		& \(e_h(\sigma_{\theta\theta};\iota)\)
		& \(4.4815\%\) & \(1.5611\%\) & \(0.4424\%\) & \(0.1637\%\)\\
		& \(e_h(u_r;\iota)\)
		& \(2.5112\%\) & \(1.4418\%\) & \(0.4020\%\) & \(0.1351\%\)\\
		\midrule
		\multirow{2}{*}{\(0.1\)}
		& \(e_h(\sigma_{\theta\theta};\iota)\)
		& \(3.7448\%\) & \(2.6589\%\) & \(0.3465\%\) & \(0.1004\%\)\\
		& \(e_h(u_r;\iota)\)
		& \(2.1140\%\) & \(1.4933\%\) & \(0.1832\%\) & \(0.0555\%\)\\
		\midrule
		\multirow{2}{*}{\(0.05\)}
		& \(e_h(\sigma_{\theta\theta};\iota)\)
		& \(6.8509\%\) & \(1.5594\%\) & \(0.2721\%\) & \(0.0618\%\)\\
		& \(e_h(u_r;\iota)\)
		& \(1.1529\%\) & \(0.5551\%\) & \(0.0778\%\) & \(0.0152\%\)\\
		\bottomrule
	\end{tabular}
	\vspace{1.0em}
\end{table}

%

Fixing \(\theta=\pi/2\), we compare the normalized Cauchy hoop-stress profiles as functions of \(r\).  For the numerical and analytical solutions, the plotted quantities are, respectively, \(\{\sigma_{\theta\theta,h}(\boldsymbol u_h)\}/\sigma_\infty\) and \(\sigma_{\theta\theta} (\boldsymbol u_\iota^{\rm KN})/\sigma_\infty\). Both profiles are evaluated at \(180\) equally spaced radial positions over \(1.13\leq r/R\leq2.99\). Since the sampling ray coincides with interior mesh edges over this interval, the numerical stress at each sampling point is obtained by averaging the values from the two adjacent elements. The resulting radial profiles are shown in Figure~\ref{fig:perforated-khakalo-radial-stress}.

\begin{figure}[htbp]
	\centering
	\begin{subfigure}[t]{0.48\textwidth}
		\centering
		\includegraphics[width=\textwidth]
		{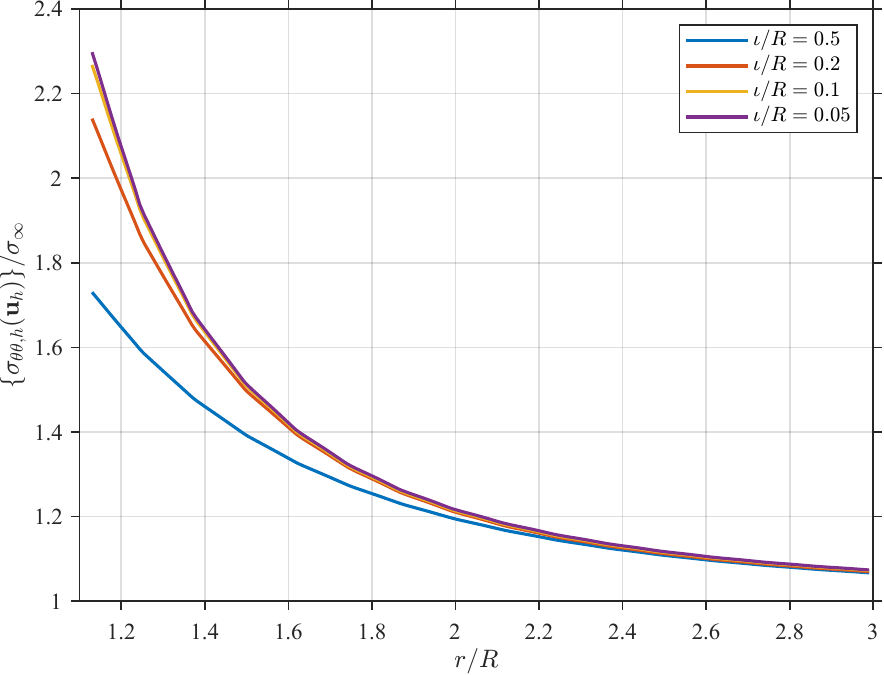}
		\caption{Numerical profiles.}
	\end{subfigure}
	\hfill
	\begin{subfigure}[t]{0.48\textwidth}
		\centering
		\includegraphics[width=\textwidth]
		{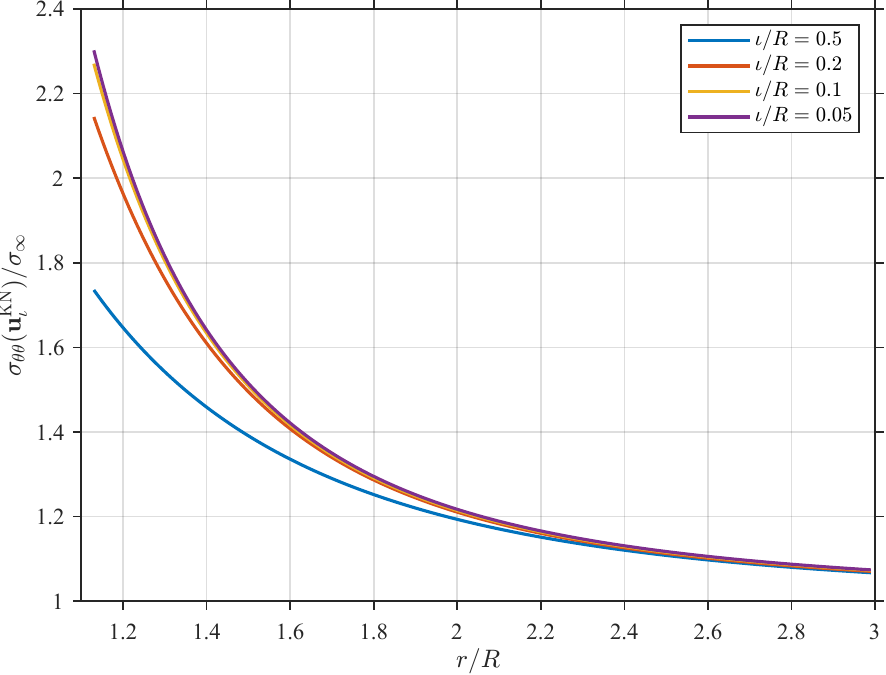}
		\caption{Khakalo--Niiranen profiles.}
	\end{subfigure}
	\caption{Normalized Cauchy hoop-stress profiles along
		\(\theta=\pi/2\) for \(h_{\rm b}/R=1/8\) and
		\(1.13\leq r/R\leq2.99\).}
	\label{fig:perforated-khakalo-radial-stress}
\end{figure}

Both the numerical and Khakalo--Niiranen profiles show that increasing \(\iota/R\) lowers the normalized Cauchy hoop stress in the vicinity of the hole. Moreover, the differences among the four values of \(\iota/R\) decay rapidly with the distance from the hole, and all profiles approach the normalized far-field value \(1\). Thus, the size parameter primarily affects the near-hole stress field, in agreement with the analytical observations in \cite[Fig.~4(a)]{khakalo2017gradient}.

\bibliographystyle{abbrv} 
\bibliography{ref} 
\end{document}